\newcommand{\bburl}[1]{\textcolor{blue}{\url{#1}}}
\newcommand{\burl}[1]{\textcolor{blue}{\url{#1}}}
\numberwithin{equation}{section}
\newtheorem{thm}{Theorem}[section]
\newtheorem{defi}[thm]{Definition}
\theoremstyle{plain}
\newtheorem{corollary}[thm]{Corollary}
\newtheorem{definition}[thm]{Definition}
\newtheorem{example}[thm]{Example}
\newtheorem{lemma}[thm]{Lemma}
\newtheorem{proposition}[thm]{Proposition}
\newtheorem{theorem}[thm]{Theorem}
\newtheorem{remark}[thm]{Remark}
\newcommand{\fq}{\text{Fibonacci\ Quilt}}
\newcommand\be{\begin{equation}}
\newcommand\ee{\end{equation}}
\newcommand\bee{\begin{equation*}}
\newcommand\eee{\end{equation*}}
\newcommand\bea{\begin{eqnarray}}
\newcommand\eea{\end{eqnarray}}
\newcommand\beae{\begin{eqnarray*}}
\newcommand\eeae{\end{eqnarray*}}
\newcommand\bi{\begin{itemize}}
\newcommand\ei{\end{itemize}}
\newcommand\ben{\begin{enumerate}}
\newcommand\een{\end{enumerate}}
\newcommand\bc{\begin{center}}
\newcommand\ec{\end{center}}
\newcommand\ba{\begin{array}}
\newcommand\ea{\end{array}}
\newcommand{\kmin}[1]{k_{\min}(#1)}
\newcommand{\kmax}[1]{k_{\max}(#1)}
\newcommand{\B}{\mathcal{B}}
\newcommand{\Z}{\ensuremath{\mathbb{Z}}}
\newcommand\frakfamily{\usefont{U}{yfrak}{m}{n}}
\DeclareTextFontCommand{\textfrak}{\frakfamily}
\newcommand{\cm}{c_{\text{{\rm mean}}}}
\newcommand{\cv}{c_{\text{{\rm variance}}}}
\newtheorem{rek}[thm]{Remark}
\newcommand{\hr}[1]{\href{#1}{\url{#1}}}
\newcommand{\dfq}{d_{\rm FQ}}
\newcommand{\dave}{d_{\rm FQ; ave}}
\newcommand{\PP}[1]{\mathbb{P}[#1]}
\newcommand{\E}[1]{\mathbb{E}[#1]}
\newcommand{\V}[1]{\text{{\rm Var}}[#1]}
\newcommand{\BS}{\mathcal{S}}
\newcommand{\T}{\mathcal{T}}
\newcommand{\LT}{\mathcal{L_T}}
\newcommand{\LS}{\mathcal{L_S}}
\newcommand{\ZS}{\mathcal{Z_S}}
\newcommand{\nsum}{Y_n}
\title{New Behavior in Legal Decompositions Arising from Non-positive Linear Recurrences}
\author{Minerva Catral}
\email{\textcolor{blue}{\href{mailto:catralm@xavier.edu}{catralm@xavier.edu}}}
\address{Department of Mathematics, Xavier University, Cincinnati, OH 45207}
\author{Pari L. Ford}
\email{\textcolor{blue}{\href{mailto:fordpl@bethanylb.edu}{fordpl@bethanylb.edu}}}
\address{Department of Mathematics and Physics, Bethany College, Lindsborg, KS 67456}
\author{Pamela E. Harris}
\email{\textcolor{blue}{\href{mailto:Pamela.Harris@usma.edu}{Pamela.Harris@usma.edu}}}
\address{Department of Mathematical Sciences, United States Military Academy, West Point, NY 10996}
\author{Steven J. Miller}
\email{\textcolor{blue}{\href{mailto:sjm1@williams.edu}{sjm1@williams.edu}},  \textcolor{blue}{\href{Steven.Miller.MC.96@aya.yale.edu}{Steven.Miller.MC.96@aya.yale.edu}}}
\address{Department of Mathematics and Statistics, Williams College, Williamstown, MA 01267}
\author{Dawn Nelson}
\email{\textcolor{blue}{\href{mailto:dnelson1@saintpeters.edu}{dnelson1@saintpeters.edu}}}
\address{Department of Mathematics, Saint Peter's University, Jersey City, NJ 07306}
\author{Zhao Pan}
\email{\textcolor{blue}{\href{mailto:zhaop@andrew.cmu.edu}{zhaop@andrew.cmu.edu}}}
\address{Department of Mathematics, Carnegie Mellon University, Pittsburgh, PA 15213}
\author{Huanzhong Xu}
\email{\textcolor{blue}{\href{mailto:huanzhox@andrew.cmu.edu}{huanzhox@andrew.cmu.edu}}}
\address{Department of Mathematics, Carnegie Mellon University, Pittsburgh, PA 15213}
\thanks{The fourth named author was partially supported by NSF grants DMS1265673 and DMS1561945 and Carnegie Mellon University. This research was performed while the third named author held a National Research Council Research Associateship Award at USMA/ARL. This work was begun at the 2014 REUF Meeting at AIM; it is a pleasure to thank AIM and ICERM for  their support, and the participants there and at the 16\textsuperscript{th} and 17\textsuperscript{th} International Conference on Fibonacci Numbers and their Applications for helpful discussions.}
\subjclass[2010]{60B10, 11B39, 11B05  (primary) 65Q30 (secondary)}
\keywords{Zeckendorf decompositions, Fibonacci quilt, non-uniqueness of representations, positive linear recurrence relations, Gaussian behavior, distribution of gaps}
\date{\today}
\begin{document}

\maketitle

\begin{abstract} Zeckendorf's theorem states every positive integer has a unique decomposition as a sum of non-adjacent Fibonacci numbers. This result has been generalized to many sequences $\{a_n\}$ arising from an integer positive linear recurrence, each of which has a corresponding notion of a legal decomposition. Previous work proved the number of summands in decompositions of $m \in [a_n, a_{n+1})$ becomes normally distributed as $n\to\infty$, and the individual gap measures associated to each $m$ converge to geometric random variables, when the leading coefficient in the recurrence is positive. We explore what happens when this assumption is removed in two special sequences. In one we regain all previous results, including unique decomposition; in the other the number of legal decompositions exponentially grows and the natural choice for the legal decomposition (the greedy algorithm) only works approximately 92.6\% of the time (though a slight modification always works). We find a connection between the two sequences, which explains why the distribution of the number of summands and gaps between summands behave the same in the two examples. In the course of our investigations we found a new perspective on dealing with roots of polynomials associated to the characteristic polynomials. This  allows us to remove the need for the detailed technical analysis of their properties which greatly complicated the proofs of many earlier results in the subject, as well as handle new cases beyond the reach of existing techniques.
\end{abstract}

\tableofcontents


\section{Introduction}



Previous work on Positive Linear Recurrence Sequences (PLRS) generalized Zeckendorf's theorem, which states that every positive integer can be uniquely written as a sum of nonconsecutive Fibonacci numbers. Papers such as \cite{MW1,MW2,DDKMMV,DDKMV} showed that the decompositions of positive integers as sums of elements from a PLRS are unique and that the average number of summands displays Gaussian behavior; see also \cite{Day, DG, FGNPT, GT, GTNP, Ha, Ho, Ke, LT, Len, Lek, Ste1, Ste2, Ze}, and see \cite{Al, DDKMMV, DDKMV} for other types of decomposition laws. Subsequent papers \cite{BBGILMT,BILMT} included proofs of the exponential decay in the gaps between summands. These papers hinge on technical arguments depending on the leading term of the recurrence relation defining the sequence being non-zero.

We have two goals in the work below: (1) we develop a new combinatorial method to bypass the technical arguments on polynomials associated to the recurrence relation which complicated arguments in previous work, and (2) we explore the behavior of some special integer sequences satisfying recurrences with leading term zero. The second is particularly interesting as all of the previous results are not applicable, and we have to develop new methods. Interestingly, while the two new sequences we introduce at first seem unrelated, knowledge of the first yields many results for the second (and thus explains why we study these two together).

The first new infinite two-parameter family of sequences are called the $(s,b)$-Generacci sequences. They were introduced in \cite{CFHMN1}, where we showed that the $(1,2)$-Generacci sequence, also referred to as the Kentucky sequence, has similar behavior to those displayed by a PLRS even though it is not a PLRS (the $(1,1)$ case is the Fibonacci numbers, hence the name). This included the Gaussian behavior for the number of summands and the exponential decay in gaps between summands \cite[Theorems 1.5 and 1.6]{CFHMN1}. In \cite{CFHMN2}, we further expanded the study of the $(s,b)$-Generacci sequences and proved that these sequences lead to unique decompositions of all positive integers. In this paper, we introduce new methods which lead to proofs of Gaussian behavior in the number of summands, both for this sequence and others in the literature, which allow us to avoid complications involving roots of polynomials. This is very much in contrast to the very technical arguments presented in \cite{MW1} for Positive Linear Recurrences. In addition, we provide an analogous result on the geometric decay in the distribution of gaps between bins (the arguments for gaps between summands is similar but involves uninteresting additional book-keeping, and hence we omit them here).

The other sequence of interest is called the Fibonacci Quilt sequence. This sequence arises naturally from a 2-dimensional construction of a log-cabin style quilt. The Fibonacci Quilt sequence, like the $(s,b)$-Generacci sequences, satisfies a recurrence with leading term zero, however in \cite{CFHMN2} we showed that the legal decompositions arising from this sequence have drastically different behavior than that of the $(s,b)$-Generacci sequence, with the major difference being that the decompositions arising from the Fibonacci Quilt sequence are not unique. In fact, we showed that the number of legal decompositions of a positive integer grows exponentially as the integer increases. Another surprising result is that among all of these decompositions, the decomposition arising from the greedy algorithm is a legal decomposition (approximately) 93\% of the time. In \cite{CFHMN2}, we defined a modified greedy algorithm, called the \emph{Greedy-6 algorithm}, and showed that the decomposition arising from this algorithm always terminates in a legal decomposition. Moreover, we showed that the Greedy-6 algorithm results in a legal decomposition with minimal number of summands. Interestingly, while there is markedly different behavior between these two new sequences in terms of uniqueness of decompositions, they exhibit similar behavior in terms of the number of summands and gaps between summands. In particular, for the Greedy-6 decomposition we obtain Gaussian behavior for the number of summands and geometric decay for the average and individual gap measures almost immediately by noticing a connection between the Fibonacci Quilt and $(4,1)$-Generacci sequences.

Below we describe the sequences in greater detail and then state our main results. In the companion paper \cite{CFHMN2} we have collected many of the basic properties of the sequences we study; we repeat the statements here so this paper may be read independently of \cite{CFHMN2}. As many of the calculations follow analogously to similar computations in the literature, we only provide the details for the new arguments; the more standard proofs are available in the appendices of this paper.



\subsection{$(s,b)$-Generacci Sequences and the Fibonacci Quilt Sequence}


\subsubsection{$(s,b)$-Generacci Sequences} \ \\
We begin by restating the definition and some computational results for the $(s,b)$-Generacci sequences. The proofs of these results appeared in \cite{CFHMN2}, and follow from straightforward algebra applied to the definitions.

Briefly, the sequence is defined as follows. We have a collection of bins $\mathcal{B}_j$, each containing $b$ numbers. We construct a sequence $\{a_n\}$ such that each positive integer has a  decomposition as a sum of elements such that (1) we take at most one element in a bin, and (2) if we take an element in bin $\mathcal{B}_j$, then we do not take any elements in any of the  $s$ bins preceding $\mathcal{B}_j$ nor the $s$ bins succeeding $\mathcal{B}_j$. We formalize the above in the following two definitions.

\begin{defi}[($s,b$)-Generacci legal decomposition]
For fixed integers $s, b \geq 1$, let an increasing sequence of positive integers $\{a_i\}_{i=1}^\infty$ and a family of subsequences $\mathcal{B}_n=\{a_{b(n-1)+1},\ldots,a_{bn}\}$ be given (we call these subsequences {\em bins}). We declare a decomposition of an integer $m = a_{\ell_1} + a_{\ell_2} + \dots + a_{\ell_k}$  where $a_{\ell_i} > a_{\ell_{i+1}}$  to be an {\em $(s,b)$-Generacci  legal decomposition} provided $\{a_{\ell_i}, a_{\ell_{i+1}}\} \not\subset \mathcal{B}_{j-s}\cup \mathcal{B}_{j-s+1}\cup \dots \cup \mathcal{B}_{j}$ for all $i,j$, with the convention that $\mathcal{B}_{j} = \emptyset$ for $j\leq 0$.
\end{defi}

\begin{defi}[($s,b$)-Generacci sequence]\label{sbDefi}
For fixed integers $s, b \geq 1$, an increasing sequence of positive integers $\{a_i\}_{i=1}^\infty$ is the {\em $(s,b)$-Generacci sequence} if every $a_i$ for $i \geq 1$ is the smallest positive integer that does not have an $(s,b)$-Generacci legal decomposition using the elements $\{a_1, \dots, a_{i-1}\}$.
\end{defi}

We recall that Zeckendorf's theorem gave an equivalent definition of the Fibonacci numbers as the unique sequence which allows one to write all positive integers as a sum of nonconsecutive elements in the sequence. Note this holds provided we define the Fibonacci numbers beginning with $1,2,3,\ldots$. It is then clear that the $(1,1)$-Generacci sequence is the Fibonacci sequence.  However, other interesting sequences are also  $(s,b)$-Generacci  sequences. For example, Narayana's cow sequence is the $(2,1)$-Generacci sequence and the Kentucky sequence (studied at length by the authors in \cite{CFHMN1}) is the $(1,2)$-Generacci sequence.

\begin{theorem}[Recurrence Relation and Explicit Formula]\label{thrm:recurrencesb_2}
For $n > (s+1)b+1$,  the $n^{\text{th}}$ term of the $(s,b)$-Generacci sequence satisfies
\begin{equation}\label{explicitConstant_2}
a_n \ = \ a_{n-b}+ba_{n-(s+1)b} \ = \  c_1\lambda_1^n \left[1 + O\left((\lambda_2/\lambda_1)^n\right)\right],
\end{equation}
where $\lambda_1$ is the largest root of $x^{(s+1)b} - x^{sb} - b  = 0$, and $c_1$ and $\lambda_2$ are constants with $\lambda_1>1$,   $c_1 > 0 $ and $|\lambda_2| < \lambda_1$.
\end{theorem}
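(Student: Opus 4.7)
The plan is to split the argument into two stages: first, a combinatorial derivation of the integer recurrence via the uniqueness of the $(s,b)$-Generacci decomposition proved in \cite{CFHMN2}; and second, an analysis of the associated characteristic polynomial to extract the asymptotic.

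For the recurrence, I would begin by invoking the unique decomposition theorem of \cite{CFHMN2} in the form: the number of legal decompositions using only summands drawn from $\mathcal{B}_1\cup\cdots\cup\mathcal{B}_k$ equals $a_{bk+1}$ (counting the empty decomposition, which represents $0$). Partitioning these decompositions according to whether bin $\mathcal{B}_k$ is used yields
\[
a_{bk+1} \;=\; a_{b(k-1)+1} \;+\; b\cdot a_{b(k-s-1)+1},
\]
because omitting $\mathcal{B}_k$ leaves $a_{b(k-1)+1}$ options, and choosing one of the $b$ elements of $\mathcal{B}_k$ forbids $\mathcal{B}_{k-s},\ldots,\mathcal{B}_{k-1}$, leaving $a_{b(k-s-1)+1}$ completions. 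To lift the recurrence from first-of-bin indices to all $n$, I would show (again via unique decomposition) that the entries of each bin form an arithmetic progression with common difference $a_{b(k-s-1)+1}$, so that $a_{(k-1)b+r}=a_{b(k-1)+1}+(r-1)\,a_{b(k-s-1)+1}$; substituting the first-of-bin recurrence at the relevant shifted indices then produces $a_n=a_{n-b}+b\,a_{n-(s+1)b}$ for every $n>(s+1)b+1$.

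For the asymptotic, the recurrence only couples indices differing by multiples of $b$, so each residue-class subsequence $b_k^{(r)}:=a_{bk+r}$ satisfies a linear recurrence in $k$ with characteristic polynomial $q(y)=y^{s+1}-y^s-b$, and the roots of $x^{(s+1)b}-x^{sb}-b$ are precisely the $b$-th roots of the roots of $q$. Standard linear-recurrence theory then yields the stated closed form provided $q$ possesses a simple dominant root $y_1>1$. Existence and simplicity on the positive real axis are immediate from $q(1)=-b<0$, $q(y)\to\infty$, and $q'(y)=(s+1)y^s-sy^{s-1}>0$ for $y\ge 1$. To show no other complex root matches $y_1$ in modulus, I rewrite $q(z)=0$ as $z^s(z-1)=b$, so $|z|^s|z-1|=b$. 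If $|z|\ge y_1$, then $|z-1|\ge|z|-1\ge y_1-1$, forcing $|z|^s|z-1|\ge y_1^s(y_1-1)=b$ with equality only if $z$ is real positive, which by uniqueness forces $z=y_1$. Thus every other root has strictly smaller modulus, so $|\lambda_2|<\lambda_1$. Writing $a_n=c_1\lambda_1^n+\cdots$, positivity of $c_1$ follows because $\lambda_1$ is the Perron root of a recurrence with nonnegative coefficients and positive initial data.

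The main obstacle I anticipate is the bookkeeping that promotes the first-of-bin recurrence to an arbitrary index: one must carefully track how admitting only the first $r-1$ entries of bin $\mathcal{B}_k$ interacts with unique decomposition, and verify that the intervals of integers represented in each branch of the case split are disjoint and flush. The polynomial half, while cleaner, also demands care in excluding spurious roots on the circle $|z|=y_1$, which is precisely what the sharp triangle inequality above is designed to accomplish.
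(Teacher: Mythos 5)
Your proposal follows essentially the same route as the paper: the recurrence is obtained by the standard counting argument from unique decomposition (this is what the paper defers to \cite{CFHMN2} as ``standard arguments involving the construction''), and the asymptotic is obtained by passing to the residue-class subsequences mod $b$, whose characteristic polynomial $y^{s+1}-y^s-b$ has nonzero leading and constant terms --- exactly the paper's stated workaround for the vanishing leading coefficient, after which one invokes a generalized Binet formula. Your sharp-triangle-inequality argument ($z^s(z-1)=b$, so $|z|\ge y_1$ forces $|z|^s|z-1|\ge y_1^s(y_1-1)=b$ with equality only at $z=y_1$) is a clean, self-contained substitute for the Perron--Frobenius/companion-matrix citation and is correct. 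One caveat you gloss over: the residue-class method produces a separate constant $c^{(r)}$ for each class $n\equiv r\pmod b$, and the stated single-constant form $a_n=c_1\lambda_1^n[1+o(1)]$ would require $c^{(r)}=c_1\lambda_1^r$, which fails in general (for the Kentucky $(1,2)$ case one gets $a_{2k}=2^k=(\sqrt2)^{2k}$ but $a_{2k-1}\sim\tfrac{2}{3}2^{k}=\tfrac{2\sqrt2}{3}(\sqrt2)^{2k-1}$, two genuinely different constants). This is an imprecision in the theorem statement rather than in your argument --- the paper only ever applies \eqref{explicitConstant_2} at indices $\equiv 1\pmod b$, where a single constant does suffice --- but a careful write-up should either restrict to one residue class or let $c_1$ depend on $n\bmod b$.
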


The proof of the recurrence follows from standard arguments involving the construction of the  $(s,b)$-sequence (see, e.g., \cite[Theorem 1.3]{CFHMN2}). The proof of the main term and error bound follows from a generalized Binet formula (see, e.g., \cite[Theorem A.1]{BBGILMT}) and we provide a proof in \S 2.1 of \cite{CFHMN2}.  There is a slight complication in that the leading coefficient of the recurrence is zero; we surmount this by passing to a related recurrence where the leading coefficient is positive and thus the standard arguments apply.

\subsubsection{Fibonacci Quilt Sequence} \ \\

We state the definition and some computational results for the Fibonacci Quilt sequence; the proofs follow immediately by straightforward algebra (see \cite{CFHMN2}). Unlike many other works in the subject, here we use the more common convention for the Fibonacci numbers that $F_0 = 1$, $F_1 = 1$ (and of course still taking $F_{n+1} = F_n + F_{n-1})$. With this notation an interesting property of the Fibonacci numbers is that they can be used  to tile the plane by squares (see Figure \ref{fig:fib_spiral}).

\begin{figure}[h]
\begin{center}
\scalebox{.4}{\includegraphics{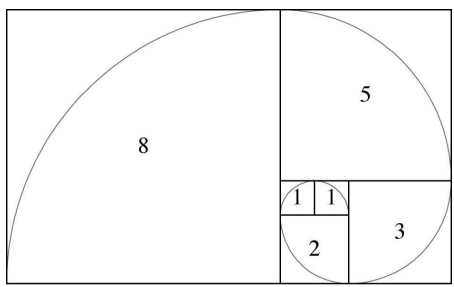}}
\caption{\label{fig:fib_spiral} The (start of the) Fibonacci Spiral.}
\end{center}\end{figure}

We have a new notion of legality based on the spiral and motivated by the Zeckendorf rule for the Fibonacci numbers involving the use of non-adjacent terms. We create a new sequence of integers by placing the integers of the sequence in the squares of the spiral (in the order the spiral is created) using the rule that we must be able to decompose every positive integer as a sum of elements in the sequence provided the squares they lie in do not share part of a side.

\begin{definition}[FQ-legal decomposition]
Let an increasing sequence of positive integers $\{q_i\}_{i=1}^\infty$ be given. We declare a decomposition of an integer
\begin{align}m \ = \  q_{\ell_1} + q_{\ell_2} +\cdots + q_{\ell_t}\end{align}
(where $q_{\ell_i} > q_{\ell_{i+1}}$) to be an FQ-legal decomposition if for all $i,j$, $|\ell_i-\ell_j|\neq 0,1,3,4$ and $\{1,3\}\not\subset\{\ell_1,\ell_2,\ldots \ell_t\}$.
\end{definition}

We compress the Fibonacci spiral so that the $n$\textsuperscript{th} square is replaced with a rectangle of thickness 1 (this allows us to display more of the pattern in the same space); we call this the Fibonacci Quilt (see Figure \ref{Fig:FibQuilt}). The adjacency of the squares in the Fibonacci spiral is identical to the adjacency of the rectangles in the Fibonacci Quilt. (The latter figure is known in the quilting community as the log cabin quilt pattern, and we adopt the name Fibonacci Quilt sequence from this connection.) The definition above states that we cannot use two terms if the rectangles they are placed in share part of an edge. We see that  $q_n+q_{n-1}$ is not legal but $q_n+q_{n-2}$ is legal for $n \ge 4$. For small $n$, the starting pattern of the quilt forbids decompositions that contain $q_3+q_1$.
\medskip

\begin{minipage}{\textwidth}
\begin{minipage}[b]{0.5\textwidth}
\centering
\resizebox{.85\textwidth}{!}{
\begin{tikzpicture}
\draw (0,0) rectangle (1,1);
\node at (.5,.5) {$q_1$};
\draw (0,0) rectangle (1,-1);
\node at (.5,-.5) {$q_2$};
\draw (1,-1) rectangle (2,1);
\node at (1.5,0) {$q_3$};
\draw (2,1) rectangle (0,2);
\node at (1,1.5) {$q_4$};
\draw (0,2) rectangle (-1,-1);
\node at (-.5,.5) {$q_5$};
\draw (-1,-1) rectangle (2,-2);
\node at (.5,-1.5) {$q_6$};
\draw (2,-2) rectangle (3,2);
\node at (2.5,0) {$q_7$};
\draw (3,2) rectangle (-1,3);
\node at (1,2.5) {$\vdots$};
\draw (-1,3) rectangle (-2,-2);
\node at (-1.5,.5) {$\cdots$};
\draw (-2,-2) rectangle (3,-3);
\node at (.5,-2.5) {$\vdots$};
\draw (3,-3) rectangle (4,3);
\node at (3.5,0) {$\cdots$};
\draw (4,3) rectangle (-2,4);
\node at (1,3.5) {$q_{n-4}$};
\draw (-2,4) rectangle (-3,-3);
\node at (-2.5,.5) {$q_{n-3}$};
\draw (-3,-3) rectangle (4,-4);
\node at (.5,-3.5) {$q_{n-2}$};
\draw (4,-4) rectangle (5,4);
\node at (4.5,0) {$q_{n-1}$};
\draw (5,4) rectangle (-3,5);
\node at (1,4.5) {$q_{n}$};
\draw (-3,5) rectangle (-4,-4);
\node at (-3.5,.5) {$q_{n+1}$};
\draw (-4,-4) rectangle (5,-5);
\node at (.5,-4.5) {$q_{n+2}$};
\draw (5,-5) rectangle (6,5);
\node at (5.5,0) {$q_{n+3}$};
\end{tikzpicture}}

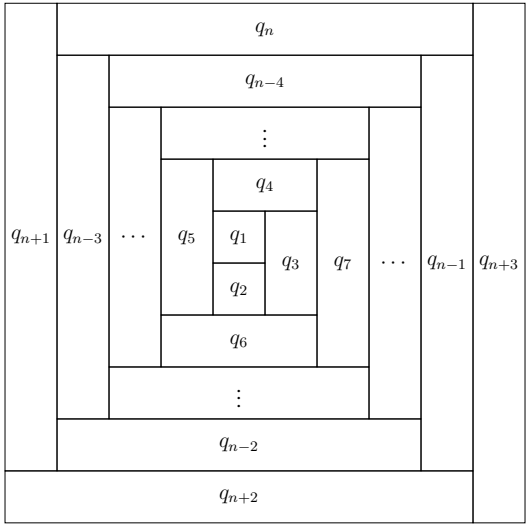
\captionof{figure}{Log Cabin Quilt Pattern}\label{Fig:LogCabin}
 \end{minipage}
  \begin{minipage}[b]{0.5\textwidth}
\centering
\resizebox{.85\textwidth}{!}{
\begin{tikzpicture}
\draw (0,0) rectangle (1,1);
\node at (.5,.5) {$1$};
\draw (0,0) rectangle (1,-1);
\node at (.5,-.5) {$2$};
\draw (1,-1) rectangle (2,1);
\node at (1.5,0) {$3$};
\draw (2,1) rectangle (0,2);
\node at (1,1.5) {$4$};
\draw (0,2) rectangle (-1,-1);
\node at (-.5,.5) {$5$};
\draw (-1,-1) rectangle (2,-2);
\node at (.5,-1.5) {$7$};
\draw (2,-2) rectangle (3,2);
\node at (2.5,0) {$9$};
\draw (3,2) rectangle (-1,3);
\node at (1,2.5) {$12$};
\draw (-1,3) rectangle (-2,-2);
\node at (-1.5,.5) {$16$};
\draw (-2,-2) rectangle (3,-3);
\node at (.5,-2.5) {$21$};
\draw (3,-3) rectangle (4,3);
\node at (3.5,0) {$28$};
\draw (4,3) rectangle (-2,4);
\node at (1,3.5) {$37$};
\draw (-2,4) rectangle (-3,-3);
\node at (-2.5,.5) {$49$};
\draw (-3,-3) rectangle (4,-4);
\node at (.5,-3.5) {$65$};
\draw (4,-4) rectangle (5,4);
\node at (4.5,0) {$86$};
\draw (5,4) rectangle (-3,5);
\node at (1,4.5) {$114$};
\draw (-3,5) rectangle (-4,-4);
\node at (-3.5,.5) {$151$};
\draw (-4,-4) rectangle (5,-5);
\node at (.5,-4.5) {$200$};
\draw (5,-5) rectangle (6,5);
\node at (5.5,0) {$265$};
\draw (6,5) rectangle (-4,6);
\node at (1,5.5) {$351$};
\draw (-4,6) rectangle (-5,-5);
\node at (-4.5,.5) {$465$};
\end{tikzpicture}}

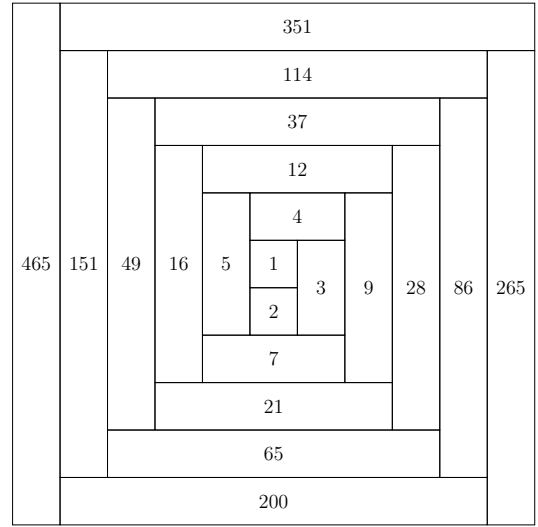
\captionof{figure}{Fibonacci Quilt Sequence}\label{Fig:FibQuilt}
\end{minipage}
  \end{minipage}


\medskip

The discussion above motivates the following definition of the Fibonacci Quilt Sequence.

\begin{definition}[Fibonacci Quilt Sequence]
The Fibonacci Quilt Sequence $\{q_i\}_{i=1}^\infty$ has $q_1 = 1$ and every $q_i$ ($i\geq 2$) is the smallest positive integer that does not have an FQ-legal decomposition using the elements $\{q_1,\ldots,q_{i-1}\}$.
\end{definition}

We display the first few terms of this sequence in Figure \ref{Fig:FibQuilt}: $\{1, 2, 3, 4, 5, 7, 9, 12, \dots\}$.

\begin{thm}[Recurrence Relations]\label{thm:rrfibquilt}

Let $q_n$ denote the $n$\textsuperscript{th} term in the Fibonacci Quilt Sequence. Then
(1) for $n\geq 6$, $q_{n+1} = q_{n}  + q_{n-4}$, (2) for $n \geq 5$, $q_{n+1} = q_{n-1}  + q_{n-2}$, and (3) we have \be q_n \ = \ \alpha_1 \lambda_1^n + \alpha_2 \lambda_2^n + \alpha_3 \overline{\lambda_2}^n, \ee where $\alpha_1 \approx 1.26724 $, \be \lambda_1 \ = \ \frac13 \left(\frac{27}{2} - \frac{3 \sqrt{69}}{2}\right)^{1/3} + \frac{\left(\frac12 \left(9 + \sqrt{69}\right)\right)^{1/3}}{3^{2/3}} \ \approx \ 1.32472 \ee and $\lambda_2 \approx -0.662359 - 0.56228i$ (which has absolute value approximately 0.8688).

\end{thm}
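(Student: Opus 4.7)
The plan is to prove parts (1) and (2) by strong induction on $n$, exploiting the defining minimality property of the Fibonacci Quilt sequence, and then derive part (3) from the characteristic equation together with Cardano's formula and a Binet-type argument. Since $q_{n+1}$ is by definition the smallest positive integer with no FQ-legal decomposition using $\{q_1, \ldots, q_n\}$, it suffices to prove two statements: (A) $q_n + q_{n-4}$ admits no such decomposition, and (B) every integer $m \in \{1, 2, \ldots, q_n + q_{n-4} - 1\}$ does. The base cases are verified directly from the tabulated values $1, 2, 3, 4, 5, 7, 9, 12, 16, \ldots$.

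For (B) the argument is constructive. Integers $m < q_n$ are handled by the inductive hypothesis for the preceding step, while an integer $m \in [q_n, q_n + q_{n-4})$ is written as $q_n + r$ with $r = m - q_n < q_{n-4}$, and one appends $q_n$ to a legal decomposition of $r$ using $\{q_1, \ldots, q_{n-5}\}$, whose existence follows by induction since $q_{n-4}$ is the minimal positive integer with no such representation. Every index at most $n - 5$ differs from $n$ by at least $5 > 4$, so FQ-legality is preserved under appending $q_n$. The equivalence of the two recurrences then reduces to the algebraic identity $q_n + q_{n-4} = q_{n-1} + q_{n-2}$, which is immediate from writing $q_n = q_{n-2} + q_{n-3}$ and $q_{n-1} = q_{n-3} + q_{n-4}$ via (2) and cancelling the common $q_{n-3} + q_{n-4}$.

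For (A) I would argue by contradiction. Suppose $q_n + q_{n-4} = q_{\ell_1} + \cdots + q_{\ell_t}$ is FQ-legal with $\ell_1 > \cdots > \ell_t$. The principal case is $\ell_1 = n$: legality with $n$ forces the remaining indices into $\{1, \ldots, n-5\} \cup \{n-2\}$, but since $q_{n-2} > q_{n-4}$, the index $n - 2$ cannot appear, so the remaining summands form a legal decomposition of $q_{n-4}$ using only $\{q_1, \ldots, q_{n-5}\}$, contradicting the defining property of $q_{n-4}$. The subcases $\ell_1 \le n - 1$ are handled by invoking the inductive form of recurrence (2) to rewrite the target $q_n + q_{n-4}$ in terms of smaller-index elements and again reducing to the minimality property at the appropriate level. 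This is the principal obstacle, because the Fibonacci Quilt sequence admits non-unique decompositions and no uniqueness shortcut is available; the forbidden-difference pattern $\{0,1,3,4\}$ together with the special exception $\{1,3\}$ must be exploited directly to bound the maximum FQ-legal sum from subsets of $\{q_1, \ldots, q_{n-1}\}$ away from the target value.

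For part (3), the recurrence $q_{n+1} = q_{n-1} + q_{n-2}$ has characteristic polynomial $x^3 - x - 1$, whose negative discriminant $-23$ guarantees one real root $\lambda_1 > 1$ and a complex conjugate pair $\lambda_2, \overline{\lambda_2}$. Cardano's formula applied to $x^3 = x + 1$ yields the stated closed form for $\lambda_1$, and the modulus $|\lambda_2| \approx 0.8688$ follows at once from the Vi\`ete relation $\lambda_1 \, |\lambda_2|^2 = 1$ (the product of the three roots equals $1$). A standard Binet-type argument then produces $q_n = \alpha_1 \lambda_1^n + \alpha_2 \lambda_2^n + \alpha_3 \overline{\lambda_2}^n$, with the three coefficients determined uniquely by the nonsingular Vandermonde linear system imposed by the initial values $q_1, q_2, q_3$; a direct solve yields $\alpha_1 \approx 1.26724$.
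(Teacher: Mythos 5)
The paper itself gives no proof of this theorem; it cites the companion paper \cite{CFHMN2} and describes the argument there as a straightforward constructive induction, which matches your overall strategy, so I can only assess your proposal on its merits. Your part (B) (every $m < q_n + q_{n-4}$ has an FQ-legal decomposition) and the case $\ell_1 = n$ of part (A) are correct. The genuine gap is in part (A) for decompositions whose largest index is at most $n-1$: you name this as ``the principal obstacle'' and assert that the forbidden differences ``must be exploited directly to bound the maximum FQ-legal sum from subsets of $\{q_1,\dots,q_{n-1}\}$ away from the target value,'' but you never carry out that bound, and it is not routine. The margin is razor thin: for $n=9$ the target is $q_9+q_5 = 21$, while $q_8+q_6+q_1 = 12+7+1 = 20$ is FQ-legal (index differences $2,5,7$), so the maximal legal sum from $\{q_1,\dots,q_8\}$ misses the target by exactly $1$. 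Moreover the naive shortcut ``every legal sum from $\{q_1,\dots,q_k\}$ is below $q_{k+1}$'' is false ($q_5+q_3 = 8 > 7 = q_6$), so one genuinely needs an auxiliary induction establishing a sharp bound such as $M_k < q_{k+2}$ for the maximal legal sum $M_k$ using indices at most $k$, keeping careful track of which indices remain admissible after the top one or two are chosen. This omitted step is the real content of parts (1) and (2), and without it the induction does not close.

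There is also a concrete error in part (3). The recurrence $q_{n+1} = q_{n-1}+q_{n-2}$ fails at $n=3$ (indeed $q_4 = 4 \neq q_2+q_1 = 3$), so the Vandermonde system seeded at $q_1,q_2,q_3$ produces the sequence $1,2,3,3,5,6,8,11,\dots$, which departs from the Fibonacci Quilt sequence at $q_4$ and yields the wrong $\alpha_1$. You must instead match at three consecutive values from which the recurrence propagates correctly, e.g.\ $q_2,q_3,q_4$ or $q_3,q_4,q_5$; the resulting closed form then holds for $n\geq 2$ but gives $2$ rather than $1$ at $n=1$, a caveat worth recording. The remainder of part (3) --- the characteristic polynomial $x^3-x-1$, its discriminant $-23$ forcing one real root and a conjugate pair, Cardano's formula for $\lambda_1$, and $|\lambda_2| = \lambda_1^{-1/2}\approx 0.8688$ from the product of the roots being $1$ --- is correct.
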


The above result appeared in \cite[Theorem 1.6 and Proposition 2.4]{CFHMN2}, and follows from a straightforward constructive proof using induction.


%

\subsection{Results}

Both the $(s,b)$-Generacci sequences and the Fibonacci quilt sequence satisfy recurrence relations with leading term zero. They display drastically different behavior in some respects, but also have very similar behavior for other problems (which allows us to deduce results for the Fibonacci Quilt sequence from results for the $(4,1)$-Generacci sequence). We begin by stating results related to the decompositions arising from these sequences, many of which are proved in the companion paper \cite{CFHMN2}. We then state new results on Gaussian behavior in the number of summands, and exponential decay in the gap measures between summands.

\subsubsection{Decompositions}

The $(s,b)$-Generacci legal decompositions are unique (\cite[Theorem 1.9]{CFHMN2}) whereas FQ-legal decompositions are not. The average number of FQ-legal decompositions grows exponentially \cite[Theorem 1.11]{CFHMN2}.

Let $m$ be a positive integer and let $\dfq(m)$ denote the number of FQ-legal decompositions of $m$. Let $\dave(n)$ denote the average number of FQ-legal decompositions of integers in $I_n := [0,q_{n+1})$. Hence \be \dave(n) \ := \ \frac1{q_{n+1}} \sum_{m=0}^{q_{n+1}-1} \dfq(m). \ee

\begin{thm}[Growth Rate of Average Number of Decompositions]\label{thm:growthratenumberdecomp} 
There exist computable constants $\lambda\approx 1.05459$ and $C_2 > C_1 > 0$ such that for all $n$ sufficiently large, \be C_1 \lambda^n \ \le \ \dave(n)\ \le\ C_2 \lambda^n.\ee Thus the average number of FQ-legal decompositions of integers in $[0, q_{n+1})$ tends to infinity exponentially fast.
\end{thm}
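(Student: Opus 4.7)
The plan is to estimate
\[
D_n \;:=\; \sum_{m=0}^{q_{n+1}-1}\dfq(m),
\]
since $\dave(n)=D_n/q_{n+1}$ and Theorem~\ref{thm:rrfibquilt} gives $q_{n+1}=\Theta(\lambda_1^n)$ with $\lambda_1\approx 1.3247$. Because any FQ-legal decomposition of an $m<q_{n+1}$ can only use terms from $\{q_1,\dots,q_n\}$, I will reinterpret $D_n$ as the number of FQ-legal subsets $S\subseteq\{q_1,\dots,q_n\}$ whose sum lies in $[0,q_{n+1})$. My strategy is to first estimate the \emph{unrestricted} count $T_n$ of all FQ-legal subsets of $\{q_1,\dots,q_n\}$, and then show that dropping the sum constraint costs only a constant factor.

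For $T_n$ I will condition on whether $q_n$ and $q_{n-2}$ lie in $S$. The three cases are: (i) $q_n\notin S$, contributing $T_{n-1}$; (ii) $q_n\in S$ and $q_{n-2}\notin S$, in which case FQ-legality (index-distance $1,3,4$ forbidden) also forces $q_{n-1},q_{n-3},q_{n-4}\notin S$ and puts $S\setminus\{q_n\}$ in bijection with FQ-legal subsets of $\{q_1,\dots,q_{n-5}\}$, contributing $T_{n-5}$; (iii) $q_n,q_{n-2}\in S$, which additionally forces $q_{n-5},q_{n-6}\notin S$ and puts $S\setminus\{q_n,q_{n-2}\}$ in bijection with FQ-legal subsets of $\{q_1,\dots,q_{n-7}\}$, contributing $T_{n-7}$. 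Summing gives, for $n$ sufficiently large,
\[
T_n \;=\; T_{n-1}+T_{n-5}+T_{n-7},
\]
whose characteristic polynomial $x^7-x^6-x^2-1$ I expect to have a unique real root $\mu\approx 1.3970$ dominating every other root in modulus, so that $T_n=\Theta(\mu^n)$.

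The upper bound $D_n\le T_n=O(\mu^n)$ is then immediate. For the lower bound I will fix a constant $k\ge 5$ large enough that $\sum_{i=1}^{n-k}q_i<q_{n+1}$ for all $n$ large; this is possible since $q_i=\Theta(\lambda_1^i)$ yields $\sum_{i=1}^{n-k}q_i=O(\lambda_1^{n-k})=o(q_{n+1})$. Every FQ-legal subset of $\{q_1,\dots,q_{n-k}\}$ then automatically has sum in $[0,q_{n+1})$ and so contributes to $D_n$, yielding $D_n\ge T_{n-k}=\Omega(\mu^n)$. Combining and dividing by $q_{n+1}=\Theta(\lambda_1^n)$ gives
\[
\dave(n)\;=\;\Theta\bigl((\mu/\lambda_1)^n\bigr)\;=\;\Theta(\lambda^n),\qquad \lambda\,:=\,\mu/\lambda_1\,\approx\,1.3970/1.3247\,\approx\,1.05459,
\]
as claimed.

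The main obstacle will be the eigenvalue analysis in the second paragraph: to produce matching positive constants $C_1<C_2$, I need $\mu$ to be a simple real root of $x^7-x^6-x^2-1$ strictly dominating the modulus of every other root, \emph{and} the coefficient of $\mu^n$ in the closed form of $T_n$ to be strictly positive. Both reduce to an explicit analysis of a degree-$7$ polynomial; the positivity of the leading coefficient should additionally follow from a Perron--Frobenius argument applied to the transfer-matrix encoding of legal subsets, since $T_n$ is a positive combinatorial count growing at exact exponential rate $\mu$.
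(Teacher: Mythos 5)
Your proposal is correct, and it reaches the theorem by the same basic mechanism as the paper's proof (which is deferred to the companion paper \cite{CFHMN2} and likewise proceeds by deriving recurrence relations for the number of FQ-legal decompositions), with one genuine simplification. Your case analysis on $q_n$ and $q_{n-2}$ is right: with $q_n$ chosen, legality excludes $q_{n-1},q_{n-3},q_{n-4}$, leaving either a free legal choice from $\{q_1,\dots,q_{n-5}\}$ or, if $q_{n-2}$ is also chosen, the further exclusions $q_{n-5},q_{n-6}$ and a free choice from $\{q_1,\dots,q_{n-7}\}$; so $T_n=T_{n-1}+T_{n-5}+T_{n-7}$ for $n\ge 8$. (As a sanity check, $T_1,\dots,T_7=2,3,4,6,8,11,15$, $T_8=21=T_7+T_3+T_1$, and the dominant root $\mu\approx 1.39703$ of $x^7-x^6-x^2-1$ gives $\mu/\lambda_1\approx 1.05459$, matching the stated $\lambda$.) Your sandwich $T_{n-k}\le D_n\le T_n$ is a clean way to dispose of the sum constraint, which is genuinely active — e.g.\ $\{q_n,q_{n-2}\}$ already has sum exceeding $q_{n+1}=q_n+q_{n-4}$ — and since the theorem asks only for two-sided bounds with a common $\lambda$, nothing is lost by not computing $D_n$ exactly as the companion paper does; this is the main point where your route is lighter than theirs. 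Finally, the step you flag as the main obstacle is not actually one: $T_n=T_{n-1}+T_{n-5}+T_{n-7}$ has non-negative coefficients with positive first and last coefficients and $\gcd(1,5,7)=1$, so its companion matrix is primitive and the generalized Binet formula the paper already invokes (\cite{BBGILMT}, Theorem A.1, used for Theorem \ref{thrm:recurrencesb_2}) applies verbatim, giving a simple, strictly dominant, positive root $\mu$ and $T_n=c\,\mu^n(1+o(1))$ with $c>0$ because $T_n$ is a positive increasing count. With that citation supplied, your argument is complete.
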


The proof of Theorem \ref{thm:growthratenumberdecomp} (found in \cite{CFHMN2}) derived recurrence relations and an explicit formula for the number of FQ-legal decompositions.

In many decomposition schemes including the ($s,b$)-Generacci case, there is a unique legal representation which can be found through a greedy algorithm. For the Fibonacci Quilt, not only does uniqueness often fail, but frequently the greedy algorithm does not terminate in a FQ-legal decomposition. For example, if we try to decompose $6 \in [q_5,q_6),$ the greedy algorithm would start with the largest summand possible, $q_5 =5$. Unfortunately at this point we would need to take $q_1 = 1$ as our next term, but we cannot as $q_1$ and $q_5 $ share a side. The only decomposition of 6 bypasses $q_5$ and uses $q_4$, writing it as $q_4 + q_2$. In \cite[Theorem 1.13]{CFHMN2}, we determined how often the greedy algorithm yields a legal decomposition.

\begin{thm}\label{thm:successgreedyalg} There is a constant $\rho \in (0,1)$ such that, as $n\to\infty$, the percentage of positive integers in $[1,q_n)$ where the greedy algorithm terminates in a Fibonacci Quilt legal decomposition converges to $\rho$. This constant is  approximately 0.92627.
\end{thm}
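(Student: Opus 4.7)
The plan is to reduce the analysis of greedy failures to a linear recurrence that matches, in its leading asymptotics, the recurrence defining $q_n$. The key structural observation is that for $m \in [q_n, q_{n+1})$ with $n \geq 6$, Theorem~\ref{thm:rrfibquilt} gives $q_{n+1} - q_n = q_{n-4}$, so after greedy picks $q_n$ the remainder $r = m - q_n$ lies in $[0, q_{n-4})$. The next greedy choice $q_k$ therefore satisfies $k \leq n-5$, making $n-k \geq 5$ and avoiding all forbidden differences $\{1,3,4\}$ relative to $q_n$. By induction on $n$, greedy succeeds on $m$ if and only if greedy succeeds on $r$.

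A direct inspection of the base cases ($m < q_6$) shows that the only greedy failure among small integers is $m = 6$: greedy picks $q_5 = 5$ then $q_1 = 1$, and $|5-1| = 4$ is forbidden. For $n \in \{1,2,3,4\}$, greedy leaves a zero remainder after picking $q_n$, and one verifies that greedy never picks both $q_3$ and $q_1$ (since greedy selects $q_3$ only when the remainder equals $3$, leaving nothing to decompose), so the secondary legality rule $\{1,3\} \not\subset \{\ell_i\}$ is never triggered. Let $F_k$ denote the number of greedy failures in $[0, q_k)$. The reduction above yields a bijection between failures in $[q_n, q_{n+1})$ and failures in $[0, q_{n-4})$ for $n \geq 6$, so for $k \geq 7$
\begin{equation*}
F_k \ = \ F_{k-1} + F_{k-5},
\end{equation*}
with initial values $F_0 = F_1 = \cdots = F_5 = 0$ and $F_6 = 1$.

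By Theorem~\ref{thm:rrfibquilt}, the sequence $\{q_k\}$ satisfies the same recurrence $q_k = q_{k-1} + q_{k-5}$ for $k \geq 7$, with characteristic polynomial
\begin{equation*}
x^5 - x^4 - 1 \ = \ (x^3 - x - 1)(x^2 - x + 1).
\end{equation*}
The dominant root $\lambda_1 \approx 1.32472$ (the real root of $x^3 = x + 1$) strictly exceeds all other roots in absolute value, so generalized Binet expansions give $q_k = c_q \lambda_1^k + O(\lambda^k)$ and $F_k = c_F \lambda_1^k + O(\lambda^k)$ for explicit constants $c_q, c_F > 0$ and some $\lambda < \lambda_1$. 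The fraction of greedy successes in $[1, q_k)$ equals $(q_k - 1 - F_k)/(q_k - 1)$, which converges to $\rho := 1 - c_F/c_q \in (0,1)$; evaluating the constants from the initial data yields $\rho \approx 0.92627$.

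The main obstacle is the structural reduction in the first paragraph: verifying that the only obstruction to greedy legality is the terminal transition $q_5 \to q_1$ (equivalently, the running remainder passing through $6$). This requires careful bookkeeping over the small-index cases where $q_{n+1} = q_n + q_{n-4}$ does not yet hold, together with the check that the auxiliary rule forbidding $\{q_1, q_3\}$ is never triggered by greedy. Once this description is in place, the convergence statement is a routine consequence of the theory of linear recurrences with dominant real root, and the numerical value follows by solving the two initial value problems for $F_k$ and $q_k$ against the factorization of $x^5-x^4-1$ above.
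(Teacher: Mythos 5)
Your argument is correct and follows essentially the same route as the paper, which defers the actual proof to the companion paper \cite{CFHMN2} and describes it as deriving a recurrence for the success count $h_n$ in $[1,q_{n+1})$ and then applying a generalized Binet formula. Your version tracks the failure count $F_k=F_{k-1}+F_{k-5}$ rather than the success count $h_k=h_{k-1}+h_{k-5}+1$, but the underlying reduction (greedy on $[q_n,q_{n+1})$ collapses to greedy on $[0,q_{n-4})$ with index gap at least $5$, with $m=6$ as the unique base-case failure) and the asymptotic analysis via the dominant root of $x^5-x^4-1$ are the same, and your numerics reproduce $\rho\approx 0.92627$.
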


The proof of Theorem \ref{thm:successgreedyalg} (found in \cite{CFHMN2}) used a recurrence for $h_n$ which denotes the number of positive integers between $1$ and $q_{n+1}-1$ where the greedy algorithm successfully terminates in a legal decomposition. The result then follows from the recurrence and the use of a generalized Binet formula.

Even though Theorem \ref{thm:successgreedyalg} shows that the greedy algorithm does not always terminate in a FQ-legal decomposition, a simple modification \emph{does} always terminate in a FQ-legal decomposition. The Greedy-6 Algorithm (defined in Definition \ref{alg:greedy6}) is identical to the greedy algorithm with the caveat that if the greedy algorithm yields a decomposition including $q_1$ and $q_5$ (which sum to $6$) we exchange them with the summands $q_2$ and $q_4$ (also summing to $6$).


\begin{definition}{(Greedy-6 Algorithm)}\label{alg:greedy6}
Decompose $m$ into sums of FQ-numbers as follows.

\begin{itemize}
\item If there is an $n$ with $m=q_n$ then we are done.
\item If $m = 6,$ then we decompose $m$ as $q_4+q_2$ and we are done.
\item If $m \geq q_6$ and $m \neq q_n$ for all $n \geq 1$, then we write $m=q_{\ell_1} + x$ where $q_{\ell_1}< m< q_{\ell_1+1}$ and $x > 0$.  We then iterate the process with $m:=x$.
\end{itemize}

We denote the decomposition of $m$ that results from the Greedy-6 Algorithm by $\mathcal{G}(m)$.
\end{definition}

\begin{theorem}\label{thm:greedy6}
For all $m > 0,$ the Greedy-6 Algorithm results in a FQ-legal decomposition.  Moreover, if $\mathcal{G}(m) = q_{\ell_1}+q_{\ell_2}+\dots +q_{\ell_{t-1}}+q_{\ell_t}$ with $q_{\ell_1}>q_{\ell_2}>\cdots>q_{\ell_t}$,  then the decomposition satisfies exactly one of the following conditions:
\begin{enumerate}
\item $\ell_i-\ell_{i+1}\geq 5$ for all $i$ or
\item $\ell_i-\ell_{i+1}\geq 5$ for $i\leq t-3$ and $\ell_{t-2}\geq10,\, \ell_{t-1}=4, \,\ell_{t}=2$.
\end{enumerate}

Further, if $m=q_{\ell_1}+q_{\ell_2}+\cdots+q_{\ell_{t-1}}+q_{\ell_t}$ with $q_{\ell_1}>q_{\ell_2}>\cdots>q_{\ell_t}$  denotes a decomposition of $m$ where either
\begin{enumerate}
\item $\ell_i-\ell_{i+1}\geq 5$ for all $i$ or
\item $\ell_i-\ell_{i+1}\geq 5$ for $i\leq t-3$ and $\ell_{t-2}\geq10,\, \ell_{t-1}=4, \,\ell_{t}=2$,
\end{enumerate}
then $q_{\ell_1}+q_{\ell_2}+\cdots+q_{\ell_{t-1}}+q_{\ell_t}=\mathcal{G}(m)$. That is, the decomposition of $m$ is the Greedy-6 decomposition.
\end{theorem}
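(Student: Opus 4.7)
My plan is to prove both assertions by strong induction on $m$, repeatedly using the recurrence $q_{n+1} = q_n + q_{n-4}$ from Theorem \ref{thm:rrfibquilt} (valid for $n \geq 6$) to strip off the largest summand at each step.

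\textbf{Form and FQ-legality of $\mathcal{G}(m)$.} After verifying by direct computation all $m \leq q_{10} = 21$ (which covers the pre-recurrence regime and every early occurrence of the $6$-swap), suppose $m > 21$ with $m \neq q_n$. Let $q_n$ be the largest FQ-number with $q_n \leq m$ and set $m' := m - q_n$. The recurrence yields $m' < q_{n+1} - q_n = q_{n-4}$, so every summand the algorithm subsequently produces from $m'$ has index at most $n-5$, giving a gap of at least $5$ between $q_n$ and the next summand. By induction $\mathcal{G}(m')$ satisfies (1) or (2), and prepending $q_n$ preserves the structure. The delicate sub-case is $m' = 6$: the algorithm emits $q_4 + q_2$, and one verifies that $n \geq 10$ automatically, since the table of differences shows $q_{n+1} - q_n \leq 5 < 6$ for $n \leq 9$, contradicting $m' < q_{n-4}$. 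FQ-legality is then immediate from (1) or (2): every index difference is either $\geq 5$ or exactly $2$ (neither lies in the forbidden set $\{0,1,3,4\}$), and indices $1$ and $3$ cannot coexist because either would force the next-smaller index to be nonpositive.

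\textbf{Uniqueness.} The engine is the telescoping identity, obtained by $j$-fold iteration of $q_n = q_{n-1} + q_{n-5}$ (valid for $n \geq 7$),
\begin{equation}
q_{N-4} \ = \ \sum_{i=1}^{j} q_{N-5i} \ + \ q_{N-5j-4}.
\end{equation}
Combined with the gap bound $\ell_{j+1} \leq \ell_1 - 5j$, this implies the crucial inequality
\begin{equation}
\sum_{i=2}^{t} q_{\ell_i} \ < \ q_{\ell_1 - 4},
\end{equation}
valid whenever $\ell_1 > \cdots > \ell_t$ satisfies (1) or (2). In case (1) this is immediate. In case (2) one telescopes over only the length-$(t-3)$ prefix to get $\sum_{i=2}^{t-2} q_{\ell_i} \leq q_{\ell_1 - 4} - q_{\ell_{t-2}-4}$, and the extra contribution $q_4 + q_2 = 6$ is absorbed by $q_{\ell_{t-2}-4} \geq q_6 = 7 > 6$, using $\ell_{t-2} \geq 10$. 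The inequality forces $m = q_{\ell_1} + (\textrm{tail}) < q_{\ell_1 + 1}$, so $q_{\ell_1}$ is the greedy choice; moreover, whenever the tail sums to exactly $6$, the Greedy-6 swap emits $q_4 + q_2$, matching the given decomposition. The tail sequence $\ell_2 > \cdots > \ell_t$ inherits condition (1) or (2) with one fewer summand, so the inductive hypothesis identifies it with $\mathcal{G}(m - q_{\ell_1})$; concatenation yields $\mathcal{G}(m)$.

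\textbf{Main obstacle.} The most delicate piece is handling the endpoint exception symmetrically in both directions: in the forward direction I must show that the greedy step automatically forces $\ell_{t-2} \geq 10$ before a $6$-swap can occur, and in the uniqueness direction I must show that this same threshold provides just enough slack (namely $q_6 = 7 > 6$) in the telescoping identity to absorb the extra $6$ from $q_4 + q_2$. The small-index base cases must be checked by direct computation, as the recurrences $q_{n+1} = q_n + q_{n-4}$ and $q_n = q_{n-1} + q_{n-5}$ hold only for $n \geq 6$ and $n \geq 7$ respectively; this bookkeeping, while routine, is where the bulk of the case analysis lives.
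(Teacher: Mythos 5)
Your proposal is correct. Note that this paper does not actually contain a proof of Theorem \ref{thm:greedy6}; it simply cites the companion paper \cite[Theorem 1.15]{CFHMN2}, so there is no in-paper argument to compare against. Your route---strong induction on $m$, using $q_{n+1}=q_n+q_{n-4}$ to show the greedy remainder $m'=m-q_n$ satisfies $m'<q_{n-4}$ (whence the gap of at least $5$, and whence $n\geq 10$ whenever $m'=6$), together with the telescoped tail bound $\sum_{i=2}^{t}q_{\ell_i}<q_{\ell_1-4}$ for the converse---is the standard and essentially inevitable argument for this kind of statement, and all the key inequalities check out (in particular the absorption of the extra $6$ by $q_{\ell_{t-2}-4}\geq q_6=7$ is exactly the right use of the hypothesis $\ell_{t-2}\geq 10$). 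The only points deserving explicit care in a write-up are the ones you already flag: the finite verification for small indices where the recurrences fail, and the degenerate reading of condition (2) when $t=2$ (i.e., $m=6$ itself), which must be handled in the base cases rather than by the induction.
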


The proof is straightforward; see \cite[Theorem 1.15]{CFHMN2}.

Let $\mathcal{D}(m)$ be a given decomposition of $m$ as a sum of Fibonacci Quilt numbers (not necessarily legal): \be m \ = \ c_1 q_1 + c_2 q_2 + \cdots + c_n q_n, \ \ \ c_i \in \{0, 1, 2, \dots\}. \ee We define the
number of summands by \be \#{\rm summands}(\mathcal{D}(m)) \ := \ c_1 + c_2 + \cdots + c_n. \ee 
We can now state our final result for the Fibonacci Quilt sequence and the number of summands in FQ-legal decompositions; the proof is again standard and given in \cite[Theorem 1.16]{CFHMN2}.

\begin{thm}\label{thm:Dm} If $\mathcal{D}(m)$ is any decomposition of $m$ as a sum of Fibonacci Quilt numbers, then \be \#{\rm summands}(\mathcal{G}(m)) \ \le \ \#{\rm summands}(\mathcal{D}(m)).\ee
\end{thm}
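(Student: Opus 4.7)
The plan is to proceed by strong induction on $m$, with a case analysis driven by the largest index appearing in a given decomposition $\mathcal{D}(m)$. The base cases $1 \le m \le 7$ are handled by direct enumeration; the verification at $m = 6$ is essential since this is the unique value where Greedy-6 deviates from the plain greedy algorithm, and one must exhibit the minimum two-summand decompositions $q_4 + q_2$ and $q_3 + q_3$ explicitly.

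For the inductive step, fix $m \ge 8$ and let $\ell$ be the unique index with $q_\ell \le m < q_{\ell+1}$. By Definition \ref{alg:greedy6} (the generic, non-exceptional branch), $\mathcal{G}(m) = q_\ell + \mathcal{G}(m - q_\ell)$, hence $\#{\rm summands}(\mathcal{G}(m)) = 1 + \#{\rm summands}(\mathcal{G}(m - q_\ell))$. Let $\mathcal{D}(m) = c_1 q_1 + \cdots + c_n q_n$ with $c_n \ge 1$; note $n \le \ell$ since $m < q_{\ell+1}$. The easy case is $n = \ell$: removing one copy of $q_\ell$ yields a decomposition of $m - q_\ell < m$ with $\#{\rm summands}(\mathcal{D}(m)) - 1$ terms, and the induction hypothesis applied to $m - q_\ell$ gives $\#{\rm summands}(\mathcal{D}(m)) - 1 \ge \#{\rm summands}(\mathcal{G}(m - q_\ell)) = \#{\rm summands}(\mathcal{G}(m)) - 1$, which rearranges to the desired inequality.

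The substantive case is $n < \ell$, where $\mathcal{D}(m)$ uses only summands from $\{q_1, \ldots, q_{\ell-1}\}$ yet sums to $m \ge q_\ell$. Here I would apply two kinds of moves: count-reducing combinations drawn from the recurrences $q_j = q_{j-1} + q_{j-5}$ and $q_j = q_{j-2} + q_{j-3}$ of Theorem \ref{thm:rrfibquilt} (together with the small-index identities $q_1 + q_1 = q_2$, $q_1 + q_2 = q_3$, $q_2 + q_2 = q_1 + q_3 = q_4$, $q_2 + q_3 = q_1 + q_4 = q_5$, and $q_3 + q_4 = q_2 + q_5 = q_6$), each replacing two summands by a single summand of strictly larger index; and count-preserving swaps $q_a + q_b \leftrightarrow q_c + q_d$ with $\max(c,d) > \max(a,b)$, such as $q_7 + q_4 = q_8 + q_1$, used to push the maximum index upward when no direct combination is available. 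Iteratively alternating these moves either produces a decomposition whose maximum index equals $\ell$, returning us to the easy case with strictly fewer summands than the original, or terminates at an irreducible configuration.

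The main obstacle is ruling out irreducible configurations that would leave the induction stalled: one must show that any irreducible decomposition with maximum index strictly less than $\ell$ but summing to $m \ge q_\ell$ already has at least $\#{\rm summands}(\mathcal{G}(m))$ summands. I would handle this by a finite case check of irreducible small-index patterns combined with a size bound exploiting $m \ge q_\ell$, the constraint that every summand is at most $q_{\ell-1}$, and the geometric growth $q_j/q_{j-1} \to \lambda_1 \approx 1.325$ from Theorem \ref{thm:rrfibquilt}, which forces any decomposition confined to small indices to contain sufficiently many terms. Combined with the earlier cases, this completes the induction and establishes the theorem.
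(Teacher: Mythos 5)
The paper itself does not prove this theorem; it states that the proof is ``standard'' and cites the companion paper \cite[Theorem 1.16]{CFHMN2}, so there is no in-text argument to compare against. Your skeleton --- strong induction on $m$, the recursion $\#{\rm summands}(\mathcal{G}(m)) = 1 + \#{\rm summands}(\mathcal{G}(m-q_\ell))$, the base cases through $m=7$, and the easy case in which $\mathcal{D}(m)$ already contains $q_\ell$ --- is sound. But the case $n<\ell$ is the entire content of the theorem, and there your argument has a genuine gap.

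Concretely, you need that every configuration with all indices below $\ell$ but total at least $q_\ell$ admits a count-non-increasing move, and you establish this only for pairs whose index gap is $1$ or $4$ (the two recurrences) plus small-index coincidences; for the remaining close gaps you offer a single ad hoc example ($q_7+q_4=q_8+q_1$) and an unsubstantiated fallback. What actually makes this style of argument close is: (i) the general swap identities $2q_i=q_{i+2}+q_{i-5}$, $q_i+q_{i-2}=q_{i+1}+q_{i-5}$, and $q_i+q_{i-3}=q_{i+1}+q_{i-8}$, all derivable from Theorem \ref{thm:rrfibquilt} for indices large enough that the recurrences apply, so that \emph{every} pair with index gap at most $4$ can be combined or swapped; (ii) the observation that a decomposition whose pairwise index gaps are all at least $5$ and whose largest index is $j$ sums to less than $q_{j+1}$ (since $q_{j-5}+q_{j-10}+\cdots<q_{j-4}$), which is what guarantees that a close pair, and hence a move, always exists while the maximum index stays below $\ell$; and (iii) a monovariant such as $(\text{number of summands},\,\text{sum of indices})$ in lexicographic order, which each of these moves strictly decreases, giving termination. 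Your proposed fallback cannot substitute for (i) and (ii): a configuration such as $q_j+q_j$, irreducible under your listed combinations, occurs at arbitrarily large $j$, so it is not a ``small-index pattern,'' and the size bound from geometric growth forces only about $q_\ell/q_j\approx\lambda_1^{\ell-j}$ summands, far below $\#{\rm summands}(\mathcal{G}(m))$, which can be of order $\ell/5$, once $j$ is within $O(\log\ell)$ of $\ell$. Finally, the cleanest way to finish is not to chase the maximum index up to $\ell$, but to note that the terminal configuration satisfies condition (1) or (2) of Theorem \ref{thm:greedy6} and therefore, by the uniqueness half of that theorem, \emph{is} $\mathcal{G}(m)$.
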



\subsubsection{Gaussian Distribution of  the Number of Summands}

One of our main theorems regarding the  $(s,b)$-Generacci sequences states that the number of summands in the $(s,b)$-Generacci legal decompositions of the positive integers follow a Gaussian distribution. We reiterate that previous results for Positive Linear Recurrences do not apply since the $(s,b)$-Generacci sequences are not Positive Linear Recurrences. Moreover, previous proofs of Gaussian behavior were very technical and relied heavily on knowledge of roots of polynomials. In this paper,  some of the ideas we use are similar to those employed when studying Positive Linear Recurrence sequences but there is a major difference. We present a new technique that allows us to bypass all of the technical assumptions required in the other papers in their proofs of Gaussianity; see also \cite{B-AM, LiM} for two different approaches (the first using Markov chains, the second using two dimensional recurrences) which also successfully avoid these complications. In \S\ref{sec:gaussian} we give a proof to this main result, and then show it is applicable to the two sequences of this paper.


\begin{theorem}[Gaussian Behavior of Summands for $(s,b)$-Generacci]\label{thm:gaussian}
Let the random variable $Y_n$ denote the number of summands in the (unique) $(s,b)$-Generacci legal decomposition of an integer chosen uniformly at random from $[a_{(n-1)b+1}, a_{nb+1})$. Normalize $Y_n$ to $Y_n' = (Y_n - \mu_n)/\sigma_n$, where $\mu_n$ and $\sigma_n$ are the mean and variance of $Y_n$ respectively, which satisfy
\begin{align}\label{muConstantA} \mu_n  \ = \   An+B+o(1), \ \ \ \ \sigma_n^2  \ = \   Cn+D+o(1),
 \end{align} for some positive constants $A,B,C,D$. Then $Y_n'$ converges in distribution to the standard normal distribution as $n \rightarrow \infty$.
\end{theorem}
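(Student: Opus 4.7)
The plan is to derive an exact combinatorial formula for the number $p_{n,k}$ of integers in $[a_{(n-1)b+1}, a_{nb+1})$ whose $(s,b)$-Generacci decomposition has exactly $k$ summands, and then to deduce the Gaussian limit by a local central limit theorem obtained directly from Stirling's formula. Because uniqueness of $(s,b)$-Generacci decompositions is already established in \cite{CFHMN2}, and because every $m$ in the interval has its largest summand in $\mathcal{B}_n$, specifying such a decomposition with $k$ summands amounts to choosing (i) bin indices $1 \le j_1 < j_2 < \cdots < j_k = n$ with $j_{i+1} - j_i \ge s+1$, and (ii) one of the $b$ elements in each selected bin. The gap conditions linearize under the substitution $d_1 = j_1 - 1$ and $d_i = j_i - j_{i-1} - (s+1)$ for $i \ge 2$, reducing (i) to counting non-negative integer solutions of $d_1 + \cdots + d_k = n - 1 - (k-1)(s+1)$. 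Stars-and-bars then gives
\begin{equation*}
p_{n,k} \ = \ b^k \binom{n-1-(k-1)s}{k-1}.
\end{equation*}

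From this closed form the mean and variance can be extracted without any reference to the characteristic polynomial $x^{(s+1)b} - x^{sb} - b$. The mode $k^{\ast}(n)$ of $k \mapsto p_{n,k}$ is located by solving $p_{n,k+1}/p_{n,k} = 1$, which, using the ansatz $k = \alpha n$ and Stirling, collapses to a single explicit polynomial equation in $\alpha$ with a unique admissible root $A = A(s,b) \in (0, 1/(s+1))$. Expanding $\log p_{n,k}$ to second order in $k - k^{\ast}(n)$ via Stirling simultaneously produces the asymptotic mean $\mu_n = An + B + o(1)$ and variance $\sigma_n^2 = C n + D + o(1)$ in the form stated, with $C$ determined by the curvature of $\log p_{n,k}$ at the mode.

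For the Gaussian limit itself, write $k = \mu_n + t\sigma_n$ and apply Stirling to $\binom{n-1-(k-1)s}{k-1}$, keeping the first three terms so that the quadratic Taylor expansion in $t$ emerges and the cubic error is negligible uniformly on compact $t$-sets. The resulting local limit
\begin{equation*}
\frac{\sigma_n\, p_{n,\lfloor \mu_n + t\sigma_n \rfloor}}{N_n} \ \longrightarrow \ \frac{1}{\sqrt{2\pi}}\, e^{-t^2/2},
\end{equation*}
where $N_n = \sum_k p_{n,k} = a_{nb+1} - a_{(n-1)b+1}$, integrates to the desired weak convergence of $Y_n' = (Y_n - \mu_n)/\sigma_n$ to the standard normal.

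The main obstacle is the third step: controlling the Stirling expansion uniformly for $k$ in a window of width $\sigma_n \asymp \sqrt{n}$ about the mode, together with a separate argument that the tails $|t| \to \infty$ contribute negligibly. Both tasks are standard but require careful bookkeeping, and they replace the intricate analysis of the roots of $x^{(s+1)b} - x^{sb} - b$ that underlies \cite{MW1, BBGILMT}: in the present combinatorial approach those roots never enter, because $p_{n,k}$ has been written down in closed form.
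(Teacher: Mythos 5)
Your proposal is correct in outline, and it takes a genuinely different route from the paper's main proof. The paper proves Theorem \ref{thm:gaussian} by computing the bivariate generating function $F(x,y)=1+byx/(1-x-byx^{s+1})$ (Proposition \ref{recurrencefromhell}), performing a partial-fraction decomposition to write $g_n(y)=\sum_i q_i(y)\alpha_i^n(y)$, establishing a dominant simple root via Perron--Frobenius (Lemma \ref{NoRepeats2}), and then invoking the black-box central limit result of \cite{DDKMV} (Theorem \ref{thm:DDKMV}); the paper's advertised novelty is that the hypothesis $C,C'>0$ of that theorem is verified by the purely combinatorial Theorems \ref{thm:gen_mean} and \ref{thm:gen_var} rather than by analyzing roots. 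You instead exploit the closed form $p_{n,k}=b^k\binom{n-1-(k-1)s}{k-1}$ (your stars-and-bars count is right, and it is consistent with the cumulative formula $b^k\binom{n-s(k-1)}{k}$ that the paper records in Appendix C, Proposition \ref{Pnk-explicit}) and run a Stirling-based local limit theorem in the spirit of \cite{KKMW}. The paper explicitly gestures at this ``Cookie Monster'' alternative in Appendix C but never carries out the Stirling analysis, so your plan completes a path the authors only sketch. What your route buys: it bypasses the generating function, the partial fractions, the root analysis, \emph{and} the separate positivity arguments, since the positivity of $A$ and $C$ falls out of the mode equation $b\bigl(1-\alpha(s+1)\bigr)^{s+1}=\alpha(1-\alpha s)^{s}$ and the manifestly positive curvature of $\log p_{n,k}$ at the mode. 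What it gives up: it is tied to having an explicit binomial formula, so unlike the paper's method it does not extend to general PLRS or to the abstract $(\mathcal{S},\mathcal{T})$-legal framework of \S 2, which is the main point of the paper's machinery. The remaining work you flag --- uniform second-order Stirling control on a $\sqrt{n}$-window about the mode, a tail estimate for $|t|\to\infty$, and pinning down the $O(1)$ shift between mode and mean to get $\mu_n=An+B+o(1)$ --- is genuinely routine for a ratio of products of $O(1)$ many linear factors, so I see no gap in principle, only bookkeeping to be done.
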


\begin{remark} Using the methods of \cite{BDEMMTTW}, these results can trivially be extended to hold for an integer chosen uniformly at random from $[1, a_{nb+1})$ by trivially combining the results for intervals of the form $[a_{\ell b +1}, a_{(\ell+1)b +1})$.
\end{remark}

By specializing the above to the $(4,1)$-Generacci sequence we immediately obtain the same result for the Greedy-6 decompositions of the Fibonacci Quilt.

\begin{theorem}[Gaussian Behavior of Summands for Greedy-6 FQ-Legal Decompositions]\label{thm:gaussianFQ}
Let the random variable $Y_n$ denote the number of summands in the (unique) Greedy-6 FQ-legal decomposition of an integer chosen uniformly at random from $[q_n, q_{n+1})$.\footnote{Using the methods of \cite{BDEMMTTW}, these results can be extended to hold almost surely for sufficiently large sub-interval of $[q_{n}, q_{n+1})$. 
} Normalize $Y_n$ to $Y_n' = (Y_n - \mu_n)/\sigma_n$, where $\mu_n$ and $\sigma_n$ are the mean and variance of $Y_n$ respectively, which satisfy
\begin{align} \mu_n \ = \   \widetilde{A}n+\widetilde{B}+o(1), \ \ \ \ \sigma_n^2  \ = \   \widetilde{C}n+\widetilde{D}+o(1),
 \end{align} for some positive constants $\widetilde{A},\widetilde{B},\widetilde{C},\widetilde{D}$. Then $Y_n'$ converges in distribution to the standard normal distribution as $n \rightarrow \infty$.
\end{theorem}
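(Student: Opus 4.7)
The plan is to reduce to the $(4,1)$-Generacci case already covered by Theorem~\ref{thm:gaussian}. The crucial observation is that the Fibonacci Quilt sequence and the $(4,1)$-Generacci sequence satisfy the same linear recurrence: Theorem~\ref{thm:rrfibquilt} gives $q_n = q_{n-1} + q_{n-5}$ for $n \geq 7$, and specializing Theorem~\ref{thrm:recurrencesb_2} to $s=4$, $b=1$ yields $a_n = a_{n-1} + a_{n-5}$. Hence both sequences are governed by the characteristic polynomial $x^5 - x^4 - 1$, share the dominant root $\lambda_1 \approx 1.32472$, and have matching Binet-type asymptotics $q_n \sim \alpha_1 \lambda_1^n$ and $a_n \sim c_1 \lambda_1^n$.

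Next I would compare the two notions of legal decomposition. The $(4,1)$-Generacci legal decompositions are precisely those whose index sequences $\ell_1 > \cdots > \ell_t$ satisfy $\ell_i - \ell_{i+1} \geq 5$ for all $i$. Theorem~\ref{thm:greedy6} shows every Greedy-6 FQ-legal decomposition is of one of two types: (i) all consecutive index differences are at least $5$, identical to the $(4,1)$-Generacci condition, or (ii) the same condition on the initial indices together with a forced terminal block $(\ell_{t-2}, 4, 2)$ with $\ell_{t-2} \geq 10$. Let $p_n(k)$ and $P_n(k)$ denote the number of integers in $[q_n, q_{n+1})$ and $[a_n, a_{n+1})$ respectively whose Greedy-6 or $(4,1)$-Generacci decomposition has exactly $k$ summands. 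Splitting by type, and using the index shift $\ell \mapsto \ell - 9$ to identify type-(ii) sequences with shorter $(4,1)$-Generacci index sets, yields the clean identity
\[ p_n(k) \ = \ P_n(k) + P_{n-9}(k-2). \]

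Applying Theorem~\ref{thm:gaussian} to the $(4,1)$-Generacci sequence, each of $P_n(k)$ and $P_{n-9}(k-2)$ is asymptotically Gaussian in $k$ with the same leading mean coefficient $A$ and variance coefficient $C$; their means differ only by the bounded shift $2 - 9A$ and their variances by $-9C + O(1)$. The mixture weights $a_{n-4}/q_{n-4}$ and $a_{n-13}/q_{n-4}$ tend to strictly positive constants determined by the ratios $c_1/\alpha_1$ and $c_1 \lambda_1^{-9}/\alpha_1$. Hence $Y_n^{\mathrm{FQ}}$ is asymptotically a convex combination of two Gaussians with common leading variance $Cn$ and means separated by $O(1)$.

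A direct characteristic function calculation (or equivalently Slutsky's theorem applied to a coupling in which $Y_n^{\mathrm{FQ}}$ differs from $Y_n^{(4,1)}$ by a bounded random shift) shows that once normalized by $\sigma_n = \Theta(\sqrt{n})$, such a mixture converges to the standard normal, with resulting parameters of the form $\widetilde{A}n + \widetilde{B} + o(1)$ and $\widetilde{C}n + \widetilde{D} + o(1)$; in fact $\widetilde{A} = A$ and $\widetilde{C} = C$, with $\widetilde{B}$ and $\widetilde{D}$ determined by the weighted average of the two components. The main obstacle is the bookkeeping: verifying cleanly that the type-(ii) sub-count inherits Gaussian asymptotics with precisely the same leading constants as the type-(i) count, so that the limiting mixture is genuinely Gaussian rather than some other superposition, and confirming positivity of $\widetilde{B}$ and $\widetilde{D}$. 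Both reduce to routine algebra with the shared characteristic polynomial $x^5 - x^4 - 1$ and its Binet expansion.
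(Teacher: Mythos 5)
Your proposal is correct and follows essentially the same route as the paper: both split the Greedy-6 decompositions into the two types given by Theorem \ref{thm:greedy6}, identify each type with $(4,1)$-Generacci counts via an index shift (your identity $p_n(k)=P_n(k)+P_{n-9}(k-2)$ is the counting form of the paper's bijections between $Q_{n,\alpha}$, $J_{n,\alpha}$, and an initial $(4,1)$-Generacci interval), and then observe that mixing two Gaussians whose means differ by $O(1)$ and whose variances share the leading term $Cn$ still yields a standard normal after rescaling by $\sigma_n=\Theta(\sqrt{n})$.
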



\subsubsection{Gaps between Summands}\label{sbgaps}

The following results concern the behavior of gaps between bins for $(s,b)$-Generacci sequences. For  $m \in [a_{(n-1)b+1},a_{nb+1})$, the legal decomposition
\begin{align}
m\ = \ a_{\ell_1} + a_{\ell_2}  + \cdots + a_{\ell_k} \ \ \ {\rm with} \ \ \ \ell_1\ >\ \ell_2 \ > \  \cdots \ > \  \ell_k, \end{align}
where $a_{\ell_i}\in \mathcal{B}_{\left\lceil\frac{\ell_i}{b}\right\rceil}$ for all $1\leq i\leq k$,  we define the  set of bin gaps as follows:
\begin{align}
\text{BGaps}(m) \ := \  \left\{\left\lceil \frac{\ell_1}{b}\right\rceil-\left\lceil \frac{\ell_2}{b}\right\rceil, \left\lceil \frac{\ell_2}{b}\right\rceil - \left\lceil \frac{\ell_3}{b}\right\rceil, \dots, \left\lceil \frac{\ell_{k-1}}{b}\right\rceil - \left\lceil \frac{\ell_{k}}{b}\right\rceil\right\}.
\end{align}
Notice we do not include the wait to the first bin, $\left\lceil \frac{\ell_1}{b}\right\rceil - 0$, as a bin gap. We could include this if we wish; one additional bin gap will not affect the limiting behavior. We study the gaps between bins, and not between individual summands, because each bin contains at most one summand, and it is natural to view each bin as either `on' or `off'. At the cost of more involved formulas we could deduce similar results about gaps between summands.

In the theorem below we consider all the bin gaps in $(s,b)$-Generacci  legal decompositions of all $m \in [a_{(n-1)b+1},a_{nb+1})$. We let $P_n(g)$ be the fraction of all these bin gaps that are of length $g$ (i.e., the probability of a bin gap of length $g$ among $(s,b)$-Generacci legal decompositions of $m\in [a_{(n-1)b+1},a_{nb+1})$). For example, when considering the $(4,9)$-Generacci sequence notice $m = a_{3} + a_{53} + a_{99} + a_{171} + a_{279}$ with $a_3\in \mathcal{B}_1$, $a_{53}\in \mathcal{B}_6$, $a_{99}\in \mathcal{B}_{11}$, $a_{171}\in \mathcal{B}_{19}$ and $a_{279}\in \mathcal{B}_{31}$, contributes two bin gaps of length 5, one bin gap of length 8, and one bin gap of length 12.

\begin{theorem}[Average Bin Gap Measure for the $(s,b)$-Generacci Sequences]\label{thm:gapstheorem}
For $P_n(g)$ as above, the limit $P(g)$ $:=$ $\lim_{n\to\infty}P_n(g)$ exists. For $g< (s+1)$, $P(g) \ = \ 0$,
and
\begin{align} P(g)\ = \
b (\lambda_{1}^b)^{-g} \ \ \ (g \ge s+1), \end{align}
where $\lambda_1$ is the largest root of $x^{(s+1)b} - x^{sb} - b  = 0$.
\end{theorem}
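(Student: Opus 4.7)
The plan is to obtain $P(g)$ by an exact enumeration of bin gaps of each length, followed by extracting the leading asymptotic using the Binet-type formula of Theorem \ref{thrm:recurrencesb_2}. Let $T_n$ denote the number of $(s,b)$-Generacci legal decompositions drawing summands only from bins $\mathcal{B}_1,\dots,\mathcal{B}_n$, with the convention $T_n = 1$ for $n \leq 0$ (the empty decomposition). Conditioning on whether bin $\mathcal{B}_n$ contains a summand yields $T_n = T_{n-1} + b\,T_{n-s-1}$, which matches the $(s,b)$-Generacci recurrence; hence $T_n = a_{nb+1}$ and $T_n = c\,u^n(1+o(1))$ with $u := \lambda_1^{b}$ and $c > 0$. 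For $1 \leq g \leq s$, legality forbids two occupied bins within distance $s$, so no such bin gap can ever occur and $P_n(g) = 0$, settling the first claim.

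For $g \geq s+1$, I would count $H_n(g)$, the total number of bin gaps of length $g$ summed over all legal decompositions of integers in $[a_{(n-1)b+1},a_{nb+1})$ (equivalently, those whose largest summand lies in $\mathcal{B}_n$). Fix the upper endpoint of a gap at $\mathcal{B}_i$ and the lower endpoint at $\mathcal{B}_{i-g}$; legality then forces the $s$-neighborhoods of both endpoints and all bins strictly between them to be empty, leaving independent legal sub-decompositions on bins $\mathcal{B}_1,\dots,\mathcal{B}_{i-g-s-1}$ and $\mathcal{B}_{i+s+1},\dots,\mathcal{B}_n$. Splitting into the cases $i = n$ (the top gap, contributing $b^{2}\,T_{n-g-s-1}$) and $g+1 \leq i \leq n-s-1$ (an interior gap, with $\mathcal{B}_n$ separately occupied, contributing $b^{3}\,T_{i-g-s-1}\,T_{n-i-2s-1}$) produces the exact formula
\begin{equation*}
H_n(g) \;=\; b^{2}\,T_{n-g-s-1} \;+\; b^{3}\sum_{i=g+1}^{n-s-1} T_{i-g-s-1}\,T_{n-i-2s-1}.
\end{equation*}

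Substituting $T_k \sim c\,u^{k}$, the bulk range $g+s+2 \leq i \leq n-2s-2$ contributes uniformly $c^{2}\,u^{n-g-3s-2}$ per term, giving a total of $\Theta(n\,u^{n-g-3s-2})$, while both boundary ranges of width $s$ and the standalone first term are only $O(u^{n-g})$. Hence $H_n(g) \sim b^{3}c^{2}\,n\,u^{n-g-3s-2}$ for each fixed $g$. The Binet upper bound $T_k \leq K\,u^{\max(k,0)}$ gives $H_n(g) \leq C\,n\,u^{n-g}$ uniformly in $g$, whose geometric $g$-tail dominates the limit; interchanging $\sum_{g}$ and $\lim_{n}$ yields
\begin{equation*}
\sum_{g=s+1}^{n-1} H_n(g) \;\sim\; b^{3}c^{2}\,n\,u^{n-3s-2}\sum_{g=s+1}^{\infty}u^{-g} \;=\; \frac{b^{3}c^{2}\,n\,u^{n-3s-2}}{u^{s}(u-1)}.
\end{equation*}
Evaluating the characteristic polynomial $x^{(s+1)b} - x^{sb} - b = 0$ at $\lambda_1$ gives $u^{s+1} - u^{s} = b$, i.e., $u^{s}(u-1) = b$, which collapses the denominator to $b^{2}c^{2}\,n\,u^{n-3s-2}$. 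Dividing establishes existence of the limit and
\begin{equation*}
P(g) \;=\; \lim_{n\to\infty}\frac{H_n(g)}{\sum_{g'}H_n(g')} \;=\; b\,u^{-g} \;=\; b\,(\lambda_1^{b})^{-g},
\end{equation*}
and the same identity confirms $\sum_{g \geq s+1}P(g) = 1$.

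The main technical obstacle is the careful separation of the linear-in-$n$ bulk contribution from the bounded-in-$n$ boundary contributions in $H_n(g)$, and the uniform geometric tail estimate needed to justify the interchange of summation and limit when normalizing by $\sum_{g'}H_n(g')$. Both reduce to the exponential bound $T_k \leq K\,u^{k}$ from the Binet formula in Theorem \ref{thrm:recurrencesb_2} and are routine, although they are the price paid for bypassing any detailed analysis of the subdominant roots of the characteristic polynomial.
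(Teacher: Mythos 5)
Your proposal is correct and follows essentially the same route as the paper: count occurrences of a length-$g$ bin gap by fixing its two endpoint bins and multiplying the counts of free legal sub-decompositions on either side, apply the Binet-type asymptotic, isolate the linear-in-$n$ bulk from the $O(\log n)$- or $O(1)$-size boundary contributions, and extract the constant $b$ from the identity $\lambda_1^{(s+1)b}-\lambda_1^{sb}=b$. The only (immaterial) difference is that you normalize directly by $\sum_{g'}H_n(g')$, whereas the paper divides by $(\mu_n-1)\,|I_n|$ and then pins down the constant by requiring $\sum_g P(g)=1$ --- both reduce to the same geometric series evaluation.
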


The proof of Theorem \ref{thm:gapstheorem} is given in \S\ref{sec:gaps}.

We obtain similar results for the individual spacing gap bin measure. We can use the result from \cite{DFFHMPP1} by showing certain combinatorial conditions are met. We quickly review the needed notation from that paper, then state the result.

Given a sequence $\{b_n\}$ and a decomposition rule that leads to unique decomposition, fix constants $c_1,d_1, c_2,d_2$ such that $I_n:= [b_{c_1n+d_1},b_{c_2n+d_2})$ is a well-defined interval for all $n>0$. Below $\delta(x-a)$ denotes the Dirac delta functional, assigning a mass of 1 to $x=a$ and 0 otherwise.

\begin{itemize}

\item \emph{Spacing gap measure:} The spacing gap measure of a $z \in I_n$ with $k(z)$ summands is
\begin{equation}
\nu_{z,n}(x) \ := \ \frac{1}{k(z)-1}\sum_{j=2}^{k(z)}\delta(x-(\ell_j-\ell_{j-1})).
\end{equation}

\item \emph{Average spacing gap measure:} The total number of gaps for all $z\in I_n$ is
\begin{equation}
N_{\rm gaps}(n) \ :=\ \sum_{z=b_{c_1n+d_1}}^{b_{c_2n+d_2}-1}(k(z)-1).
\end{equation}

The average spacing gap measure for all $z\in I_n$ is
\begin{eqnarray} \nu_n(x) & \ :=\ & \frac1{N_{{\rm gaps}}(n)} \sum_{z=b_{c_1n+d_1}}^{b_{c_2n+d_2}-1} \sum_{j=2}^{k(z)} \delta\left(x - (\ell_j - \ell_{j-1})\right) \nonumber\\ & \  = \ & \frac1{N_{{\rm gaps}}(n)} \sum_{z=b_{c_1n+d_1}}^{b_{c_2n+d_2}-1} \left(k(z)-1\right) \nu_{z,n}(x).
\end{eqnarray}
Letting $P_n(g)$ denote the probability of a gap of length $g$ among all gaps from the decompositions of all $m\in I_n$, we have
\begin{equation}
 \nu_n(x) \ = \ \sum_{g=0}^{c_2n+d_2-1} P_n(g) \delta(x - g).
\end{equation}

\item \emph{Limiting average spacing gap measure, limiting gap probabilities:} If the limits exist,  let
\begin{equation} \label{gaplim}
\nu(x) \ = \ \lim_{n\to\infty} \nu_n(x), \ \ \ \ P(g) \ = \ \lim_{n \to \infty} P_n(g).
\end{equation}



\end{itemize}

Although this notation was originally defined for gaps between summands, by taking the $\ell_i$ to represent the gaps between bins, this notation is applicable to our sequences.

\begin{theorem}[Spacing Bin Gap Measure for $(s,b)$-Generacci sequences]\label{indBinGap}
Let $\{a_n\}$ denote the $(s,b)$-Generacci sequence, then for $z\in I_n:=[a_{b(n-1)+1},a_{bn+1})$, the spacing bin gap measures $\nu_{z,n}(x)$ converge almost surely in distribution to the limiting bin gap measure $\nu(x)$.
\end{theorem}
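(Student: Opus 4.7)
The plan is to invoke the general framework of \cite{DFFHMPP1}, which upgrades convergence of the average spacing gap measure $\nu_n$ to almost-sure convergence of the individual spacing gap measures $\nu_{z,n}$, provided three structural hypotheses hold: (a) legal decompositions are unique, (b) the limiting average gap measure $\nu$ exists, and (c) the second moments of the gap-count statistics are sufficiently concentrated to drive a Chebyshev/Borel--Cantelli argument. For the $(s,b)$-Generacci sequences, (a) is \cite[Theorem 1.9]{CFHMN2} and (b) is precisely Theorem \ref{thm:gapstheorem}. The entire proof therefore reduces to verifying the second-moment condition (c).

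For (c), fix a gap length $g$ and for $z\in I_n$ let $N_g(z)$ count the number of bin gaps of length exactly $g$ appearing in the $(s,b)$-Generacci legal decomposition of $z$. Writing $N_g(z)=\sum_i X_{i,g}(z)$, where $X_{i,g}(z)=1$ if and only if the gap between the bins containing the $i$-th and $(i{+}1)$-st largest summands of $z$ has length $g$, the target estimates are
\begin{equation*}
\mathbb{E}[N_g(z)] \ = \ (An+O(1))P(g),\qquad \mathrm{Var}[N_g(z)] \ = \ O(n),
\end{equation*}
where the expectation and variance are taken over $z\in I_n$ chosen uniformly. The first assertion is immediate from linearity and the argument already carried out for Theorem \ref{thm:gapstheorem}, combined with the Gaussian control of $k(z)$ supplied by Theorem \ref{thm:gaussian}.

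The real work is the variance bound, which requires estimating the pair correlations $\mathbb{E}[X_{i,g}(z)X_{j,g}(z)]$. Here I would exploit the central combinatorial feature of the $(s,b)$-Generacci construction: once we condition on the bins containing two specified summands of $z$, the configurations of summands strictly between them, strictly above them, and strictly below them are independent and are counted by shifted copies of the same recurrence \eqref{explicitConstant_2}. Applying the generalized Binet formula from Theorem \ref{thrm:recurrencesb_2} to each factor yields
\begin{equation*}
\mathbb{E}[X_{i,g}(z)X_{j,g}(z)] \ = \ P(g)^{2} + O\bigl((\lambda_2/\lambda_1)^{|i-j|}\bigr),
\end{equation*}
and summing over pairs $i,j$ the double sum is controlled by its diagonal, giving $\mathrm{Var}[N_g(z)]=O(n)$. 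Chebyshev then yields $\mathbb{P}(|\nu_{z,n}(g)-P(g)|>n^{-1/4})=O(n^{-1/2})$; passing to the subsequence $n_k=k^{4}$ and applying Borel--Cantelli delivers almost sure convergence along this subsequence, after which a standard interpolation argument (the decompositions of $z\in I_{n_k}$ and $z\in I_n$ differ by $O(1)$ summands when $n_k\le n<n_{k+1}$) extends the conclusion to all $n$.

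The main obstacle will be making the pair-correlation factorization rigorous at the boundary, i.e., when $i$ or $j$ lies near the largest or smallest occupied bin of $z$. I would handle this by reusing the bin-by-bin transfer-matrix formalism underlying the proof of Theorem \ref{thm:gaussian}: joint occupancy statistics at two well-separated bin positions become products of transfer-matrix entries, and the spectral gap of that matrix (governed by $\lambda_1/|\lambda_2|>1$) furnishes the required exponential decay uniformly in the boundary behavior. This same ingredient simultaneously controls the error term in $\mathbb{E}[N_g(z)]$, closing the argument.
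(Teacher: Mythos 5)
Your proposal follows essentially the same route as the paper: both reduce the theorem to the hypotheses of \cite[Theorem 1.1]{DFFHMPP1} (restated as Theorem \ref{genthm2}), and both identify the pair-correlation estimate as the only substantive new input, proving it by the same mechanism --- conditioning on the bins containing specified summands splits the count into a product of independent regional counts, each evaluated by the generalized Binet formula of Theorem \ref{thrm:recurrencesb_2}, with the spectral gap $|\lambda_2|<\lambda_1$ killing the cross terms. This is exactly what the paper does in Proposition \ref{prop:error}, except that the paper indexes the correlation sum by bin positions $j_1<j_2$ (as the cited theorem requires) rather than by ordinal gap indices $i,j$; your formulation is equivalent up to bookkeeping but the positional version is what you would actually need to verify.

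One caution: the final block of your argument, in which you re-derive the almost-sure convergence by Chebyshev, Borel--Cantelli along $n_k=k^4$, and an ``interpolation'' between $I_{n_k}$ and $I_n$, is both unnecessary and flawed as stated. It is unnecessary because \cite[Theorem 1.1]{DFFHMPP1} already packages that machinery --- once Conditions (1)--(3) of Theorem \ref{genthm2} are checked (Gaussianity and $k(z)\ll n$ from Theorem \ref{thm:gaussian}, existence of $P(g)$ from Theorem \ref{thm:gapstheorem}, and the pair correlations), you are done. It is flawed because there is no natural identification of elements of $I_{n_k}$ with elements of $I_n$ for $n_k\le n<n_{k+1}$, and the number of summands of typical elements of these two intervals differs by order $n-n_k\sim k^3$, not $O(1)$, so the interpolation step as written does not close. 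Strike that block, cite the general theorem for the almost-sure upgrade, and the remainder of your outline matches the paper's proof.
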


As $\nu(x)=P(x)$, the spacing bin gap measure converges in distribution to  geometric decay behavior.



The same ideas which gave us Gaussian behavior for the Fibonacci Quilt Greedy-6 decomposition from the Gaussian behavior for the $(4,1)$-Generacci sequence also, with trivial tweaking, yield similar results on the average and spacing gap measures. We consider all $m\in I_n:=[q_n,q_{n+1})$,  i.e., those $m$
with a Greedy-6 decomposition beginning with $q_n$.
We let $P_n(g)$ be the fraction of all  gaps from all $m\in I_n$ that are of length $g$.

\begin{theorem}[Average and Spacing Gap Measures for the Greedy-6 Decomposition]\label{thm:FQgapstheorem} Let $\{q_n\}$ denote the Fibonacci Quilt sequence, $P_n(g)$ as above, and consider $m\in I_n:=[q_n,q_{n+1})$. The limit $P(g)$ $:=$ $\lim_{n\to\infty}P_n(g)$ exists and agrees with the $(4,1)$-Generacci limit, and the spacing  gap measures $\nu_{z,n}(x)$ from the Greedy-6 decomposition converge almost surely in distribution to the limiting gap measure from  the (4,1)-Generacci sequence.
\end{theorem}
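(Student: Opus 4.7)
The plan is to set up a gap-preserving near-bijection between Greedy-6 Fibonacci Quilt decompositions of $m \in I_n = [q_n, q_{n+1})$ and $(4,1)$-Generacci legal decompositions of $m' \in [a_n, a_{n+1})$, and then transfer the limits via Theorems \ref{thm:gapstheorem} and \ref{indBinGap}. By Theorem \ref{thm:greedy6} every Greedy-6 decomposition has index string $(\ell_1, \ldots, \ell_t)$ satisfying either (case 1) all gaps $\ell_i - \ell_{i+1} \geq 5$, or (case 2) the same condition together with a trailing exception $\ell_{t-1} = 4,\, \ell_t = 2$, contributing a single gap of length $2$. Case (1) matches exactly the $(4,1)$-Generacci legality pattern, since $s = 4,\, b = 1$ forces consecutive-index gaps $\geq s+1 = 5$. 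The bijection $\Phi_n$ sends an index string to itself, viewed as either a Fibonacci Quilt or $(4,1)$-Generacci decomposition; I would verify $\Phi_n$ is well defined by showing that pure greedy applied to $\sum q_{\ell_j}$ recovers the case-(1) string, which reduces to the inequality $\sum_{i \geq 1} q_{n - 5i} < q_{n-4}$, a consequence of the recurrence $q_{n+1} - q_n = q_{n-4}$ and the identity $\lambda_1^5 = \lambda_1^4 + 1$.

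Next, I would control the case-(2) contributions. By Theorem \ref{thm:successgreedyalg} case (2) has density at most $1-\rho \approx 0.074$ in $I_n$, and each case-(2) decomposition contributes exactly one length-$2$ gap together with $t-2$ gaps $\geq 5$ coming from an index-string prefix that obeys the $(4,1)$-Generacci gap rule (with the extra constraint $\ell_{t-2} \geq 10$). By Theorem \ref{thm:gaussianFQ} the total number of gaps across $I_n$ satisfies $N_{\rm gaps}(n) = \Theta(n\,|I_n|)$, so the fraction of length-$2$ gaps is $O(1/n) \to 0$. For gaps of length $g \geq 5$, the case-(2) prefix contribution has the same limiting distribution as the unconstrained $(4,1)$-Generacci gap distribution, because the boundary constraint $\ell_{t-2} \geq 10$ affects only $O(1)$ out of $\approx \widetilde{A} n$ summands. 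Combining these with the case-(1) bijection, Theorem \ref{thm:gapstheorem} with $(s,b) = (4,1)$ yields $P(g) = \lambda_1^{-g}$ for $g \geq 5$ and $P(g) = 0$ otherwise, where $\lambda_1$ is the largest root of $x^5 - x^4 - 1 = (x^3 - x - 1)(x^2 - x + 1) = 0$, i.e., the plastic number of Theorem \ref{thm:rrfibquilt}.

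For almost sure convergence of the individual spacing gap measure $\nu_{z,n}$, on case-(1) $z$'s the measure $\nu_{z,n}$ equals the corresponding $(4,1)$-Generacci spacing measure exactly under $\Phi_n$, so convergence follows from Theorem \ref{indBinGap} applied to the $(4,1)$-Generacci sequence. On case-(2) $z$'s, $\nu_{z,n}$ differs from the $(4,1)$-Generacci-type measure built from its prefix by a total-variation perturbation of order $1/(k(z) - 1)$, which vanishes almost surely as $k(z) \to \infty$ by the Gaussian behavior result in Theorem \ref{thm:gaussianFQ}. Combining the two cases yields almost sure convergence to the common limit. The main obstacle is the careful handling of case (2): its density in $I_n$ is bounded away from zero (around $7.4\%$), so it must be shown that the per-decomposition perturbation is of bounded size relative to the linearly-growing gap count, and that the prefix gap distribution is asymptotically insensitive to the constraint $\ell_{t-2} \geq 10$. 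A secondary technical point is the greedy-recovery lemma underlying $\Phi_n$, which reflects the shared dominant eigenvalue $\lambda_1$ of $\{q_n\}$ and the $(4,1)$-Generacci sequence $\{a_n\}$.
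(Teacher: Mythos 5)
Your proposal is correct and follows essentially the same route as the paper: both arguments identify case-(1) Greedy-6 index strings with $(4,1)$-Generacci legal index strings, handle the $q_4+q_2$ tail of case (2) as an $O(1)$ perturbation per decomposition (so the length-$2$ gaps contribute only $O(1/n)$ to $P_n(g)$ and $O(1/(k(z)-1))$ to $\nu_{z,n}$), and then import Theorems \ref{thm:gapstheorem} and \ref{indBinGap} for $(s,b)=(4,1)$. The only remark worth making is that your ``greedy-recovery lemma'' is already supplied by the converse direction of Theorem \ref{thm:greedy6}, so it need not be reproved.
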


\subsubsection{New behavior for Fibonacci quilt sequence: $k_{\min}$ vs $k_{\max}$}\label{sec:kmin_kmax}



We do not have unique decompositions with the Fibonacci Quilt sequence.   By Theorem \ref{thm:Dm}, we know that the Greedy-6 algorithm results in a legal decomposition with a minimal number of summands.  Here we investigate the range of the number of summands in any FQ-legal decomposition. 

\begin{definition}
We define $\kmin{m}$ (resp. $\kmax{m}$) to be the smallest (resp. largest) number of summands in any FQ-legal decomposition of $m$.
\end{definition}

The following result gives a lower bound for the growth of $\kmax{m} - \kmin{m}$ which holds for almost all $m \in [q_n, q_{n+1})$ as $n\to\infty$. In particular, we almost always have $\kmax{m} \neq \kmin{m}$. The proof is given in \S\ref{sec:range}.

\begin{thm}\label{thm:kmaxkmin} There is a $C_{\rm FQ} > 0$ such that, as $n\to\infty$, we have $\kmax{m} - \kmin{m} \ge C_{\rm FQ} \log(n)$ for almost all $m\in[q_n,q_{n+1})$. \end{thm}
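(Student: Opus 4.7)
The plan is to exhibit a local FQ-legal ``swap'' identity that converts two summands into three, and then to show that many such swaps can be applied to the Greedy-6 decomposition of almost every $m\in[q_n,q_{n+1})$, producing alternative legal decompositions with many more summands than $\kmin(m)$.

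First I would establish the identity
\[
q_n + q_{n-5} \;=\; q_{n-1} + q_{n-3} + q_{n-10} \qquad (n\ge 12),
\]
which follows from combining the two recurrences in Theorem \ref{thm:rrfibquilt}. Concretely, $q_{n-10}=q_{n-5}-q_{n-6}$ (from $q_{n-5}=q_{n-6}+q_{n-10}$) and $q_{n-3}-q_{n-6}=q_{n-5}$ (from $q_{n-3}=q_{n-5}+q_{n-6}$), so $q_{n-1}+q_{n-3}+q_{n-10}=q_{n-1}+2q_{n-5}=q_n+q_{n-5}$. The LHS summands have index gap $5$ and the RHS summands have gaps $\{2,7\}$, all of which avoid the forbidden set $\{1,3,4\}$; hence both sides are FQ-legal.

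Next, let $\mathcal{G}(m)=q_{\ell_1}+\cdots+q_{\ell_t}$ denote the Greedy-6 decomposition; by Theorem \ref{thm:Dm}, $\kmin(m)=t$. I call a position $i\le t-3$ \emph{good} when $\ell_i\ge 14$, $\ell_i-\ell_{i+1}=5$, and $\ell_{i+1}-\ell_{i+2}\ge 10$. At any good position the identity lets us replace $q_{\ell_i}+q_{\ell_{i+1}}$ by $q_{\ell_i-1}+q_{\ell_i-3}+q_{\ell_i-10}$; the cushion $\ell_{i+1}-\ell_{i+2}\ge 10$ ensures the gap between the new summand $q_{\ell_i-10}$ and $q_{\ell_{i+2}}$ is at least $5$, while $\ell_i\ge 14$ prevents introducing a $\{q_1,q_3\}$ pair, so the swap yields another FQ-legal decomposition. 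If $i_1<i_2<\cdots<i_s$ are good positions with $i_{j+1}-i_j\ge 2$, the same $g\ge 10$ cushion guarantees the swaps do not interfere (the lowest new summand of swap $j$ and the highest new summand of swap $j+1$ differ by at least $g_{i_j+1}-4\ge 6$), so they can be performed simultaneously to produce an FQ-legal decomposition with $t+s$ summands. This gives $\kmax(m)-\kmin(m)\ge s$.

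It remains to show that, for almost every $m\in[q_n,q_{n+1})$, the number $X_n(m)$ of pairwise-separated good positions in $\mathcal{G}(m)$ is at least $C_{\rm FQ}\log n$; in fact the argument yields the much stronger bound $X_n(m)\ge cn$. By Theorem \ref{thm:gaussianFQ}, $t(m)=\widetilde An+O(\sqrt n)$ with probability tending to $1$. By Theorem \ref{thm:FQgapstheorem}, the individual gap distribution in $\mathcal{G}(m)$ converges almost surely to the geometric law $P(g)=\lambda_1^{-g}$ ($g\ge 5$), and the gap sequence arises from a mixing Markov chain (the Greedy-6 analogue of the Gauss map behind Theorem \ref{indBinGap}). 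Consequently, the probability that a fixed position is good is at least $P(5)\cdot P(\ge 10)>0$, and the good-position indicators are weakly dependent. A Chebyshev estimate with $\mathbb E[X_n]=\Omega(n)$ and $\operatorname{Var}(X_n)=O(n)$ (the latter from the exponential mixing of the gap chain) gives $X_n(m)\ge cn/2$ for all but an $O(1/n)$ fraction of $m\in[q_n,q_{n+1})$, which more than suffices.

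The main obstacle is the final step: the gap-distribution results already in the paper describe the \emph{single}-gap marginal, whereas the indicator that position $i$ is good depends on the \emph{pair} $(g_i,g_{i+1})$. Lifting the transfer-operator/Markov-chain analysis of Theorem \ref{indBinGap} to the two-gap level, in order to get quantitative mixing strong enough for the variance bound, is routine in spirit but the bookkeeping is more involved than for the one-dimensional marginal; this is where most of the technical work of the proof would actually be.
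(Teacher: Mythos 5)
Your local swap identity is correct, and in fact it is exactly the replacement the paper uses: writing it at the level of gap strings, your move turns $(\dots,g_{i-1},5,g_{i+1},\dots)$ with $g_{i+1}\ge 10$ into $(\dots,g_{i-1}+1,2,7,g_{i+1}-5,\dots)$, which is the paper's substitution of $(5,5,10)$ by $(6,2,7,5)$ derived from the same two recurrences; your legality and non-interference checks for scattered swaps are also sound. Where you diverge is in the counting. The paper concatenates $g(n)$ copies of the pattern into a single gap substring $\mathcal{G}=(5,5,10,\dots,5,5,10)$ of length $\Theta(\log n)$ and only needs \emph{one} occurrence of $\mathcal{G}$ somewhere in the decomposition; a crude per-block lower bound of $2^{-(20g(n)+1)}$ on the probability of the pattern, applied to $\lfloor n/(20g(n)+1)\rfloor$ disjoint blocks, then gives occurrence with probability tending to $1$ precisely when $g(n)=C_{\rm FQ}\log n$ with $C_{\rm FQ}<1/(20\log 2)$. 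This is a first-moment/conditional-probability argument and needs no information about the joint law of adjacent gaps.

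Your route instead asks for $\Omega(n)$ scattered occurrences of the short pattern and therefore needs concentration of the count, and this is where the proposal has a genuine gap, which you yourself flag. The inequality $\Pr[g_i=5,\ g_{i+1}\ge 10]\ \ge\ P(5)\,P(\ge 10)$ does not follow from anything in the paper: Theorem \ref{thm:FQgapstheorem} and Proposition \ref{prop:error} control single-gap marginals and \emph{averaged} pair statistics over well-separated positions, not the joint distribution of two adjacent gaps, and adjacent gaps in these decompositions are correlated, so the product of marginals is not automatically a lower bound. Likewise the variance bound $\operatorname{Var}(X_n)=O(n)$ would require a quantitative mixing statement for a gap Markov chain that is never constructed. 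Without these inputs the Chebyshev step does not go through, and your argument does not yet recover even the $\log n$ bound. The fix is available and cheap: rather than proving concentration, chain $C_{\rm FQ}\log n$ of your swaps into one long pattern and demand a single occurrence, reducing the probabilistic input to the elementary block estimate above --- which is exactly the paper's proof. If you do want the stronger conclusion $\kmax{m}-\kmin{m}\gg n$ (which your outline aims at and which is plausibly true), you would need to actually establish the two-gap statistics or an exponential-mixing property for the Greedy-6 gap process; that is new work beyond what the paper provides.
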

\section{Gaussian Behavior of Number of Summands}\label{sec:gaussian}

The following sections provide the pieces needed to prove Theorems \ref{thm:gaussian} and \ref{thm:gaussianFQ}. We introduce a new method that allows us to bypass many of the technical obstructions that arise when using standard techniques to handle the determination of the mean and variance in the number of summands. Using this approach we not only can reprove existing results, but also handle new cases such as the $(s,b)$-Generacci and the $\fq$ sequences of this paper. 



\subsection{Proof of Positivity of Linear Terms}\label{positive}

The idea of this section is to reprove and generalize many of the technical results from \cite{MW1} without doing the involved analysis that is needed in order to derive properties of roots of certain polynomials in several variables. In many other papers the methods from \cite{MW1} can be used without too much trouble, as there are explicit formulas available for all the polynomials which arise; however, there are many situations where this is not the case. These difficulties greatly lengthened that paper (and restricted the reach of other works) and resulted in several technical appendices on the behavior of the roots. We avoid these calculations by adopting a more combinatorial view. 

Letting $\{a_n\}$ be any sequence of interest, we prove that the mean and the variance in the number of summands of $m \in [a_n, a_{n+1})$ diverge linearly with $n$. Standard generating function arguments show that the first grows like $Cn + d + o(1)$ and the second like $C'n + d' + o(1)$, where the constants are values of roots of certain associated polynomials (and their derivatives). 
The difficulty in the subsequent analysis of the Gaussianity of the number of summands is that $C$ or $C'$ could vanish. Briefly, the idea behind our combinatorial approach below is that if $C$ 
were to vanish, we would count incorrectly and not have the right number of decompositions. The proof for $C$ (the mean) is very straightforward; the proof for $C'$ (the variance) is more involved, though it essentially reduces to a good approach to counting and then careful book-keeping.

In the arguments below we use $a_n$ to denote the $n$\textsuperscript{th} term of the sequence; we use this and not $G_n$ to emphasize the generality of the results (i.e., the results below are true for more than just PLRS).

\subsection{The Mean}

We introduce some terminology to help us prove results in great generality. Given a length $L$, a {\em segment} of  summands in a generalized Zeckendorf decomposition starting at index $i$ are the summands taken from $\{a_i, a_{i+1}, \dots, a_{i+L-1}\}$; note that for some decomposition rules we may choose a summand with multiplicity. If we write the expansion for $m \in [a_n, a_{n+1})$ we get \be m\ = \ a_{r_1} + a_{r_2} + \cdots + a_{r_{k(m)}}\ee with $a_{r_1} \geq a_{r_2} \geq \cdots \geq a_{r_{k(m)}}$,  where frequently $r_1 = n$. We denote the number of summands of $m$ as $k(m)$, while the number of summands in the segment of length $L$ starting at $i$ is just the number of indices $r_j$ with $i \le r_j < i+L$.

\begin{defi} We say the legal decomposition \emph{acts over a fixed distance} if there is some finite number $f$ such that two segments of a legal decomposition do not interact if they are separated by at least $f$ consecutive summands that are not chosen. This means that whatever summands we have (or do not have) in one segment does not affect our choices in the other, and for the entire decomposition to be legal each of these two segments must be legal. \end{defi}

Note that the sequences we study in this paper both act over a fixed distance. For the $(s,b)$-Generacci sequence we can take $f=sb+1$ and for the Fibonacci Quilt sequence we can take $f=5$. It is also the case that Positive Linear Recurrence relations, which come with a notion of a legal decomposition, act over a fixed distance (we can take $f$ to be at least the length of the recurrence).






The next theorem states that for many generalized Zeckendorf decompositions, $\mu_n$, the average number of summands of integers in $[a_n, a_{n+1})$, is a linear function in $n$ with positive slope, up to an $o(1)$ term which vanishes in the limit.

\begin{thm}\label{thm:gen_mean} Consider an  increasing sequence $\{a_n\}$ which gives rise to unique legal decompositions of the positive integers such that  

\begin{itemize}
\item the rule for the legal decomposition acts over a fixed distance,
\item the average number of summands used for $m \in [a_n, a_{n+1})$ is $\mu_n = Cn + d + o(1)$, and
\item given any constant $A>0$ there is a length $L$ and a probability $p = p(A,L) > 0$  that is less than or equal to the proportion  of legal ways to choose summands in any segment of length $L$ that  have at least $A$ summands, regardless of the choices of summands outside the segment.
\end{itemize}
Then $C > 0$.
\end{thm}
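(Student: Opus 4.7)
The plan is to prove $C > 0$ by contradiction: assume $C = 0$ so that $\mu_n = d + o(1)$ is uniformly bounded, and use the third hypothesis to force the average number of summands over $[0, a_{n+1})$ to diverge, contradicting that boundedness.

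First I would fix any $A > 0$ (say $A = 1$) and invoke the third hypothesis to obtain a length $L$ and a probability $p = p(A,L) > 0$. Partition the index set $\{1, 2, \dots, n\}$ into $B := \lfloor n/L \rfloor$ disjoint segments $S_1, \dots, S_B$, each of length $L$. Pick $m$ uniformly at random from $[0, a_{n+1})$; by uniqueness of legal decompositions this is equivalent to choosing a uniformly random legal subset of $\{a_1, \dots, a_n\}$. For each $j$, let $X_j$ count the summands of $m$ whose index lies in $S_j$. Conditioning on the legal pattern outside $S_j$ and applying the third hypothesis gives $\PP{X_j \ge A \,|\, \text{outside } S_j} \ge p$, so unconditionally $\PP{X_j \ge A} \ge p$. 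Since $k(m) \ge \sum_{j=1}^{B} X_j \ge A \cdot \#\{j : X_j \ge A\}$, linearity of expectation yields
\be \E{k(m)} \ \ge \ A\sum_{j=1}^{B} \PP{X_j \ge A} \ \ge \ ApB \ \sim \ \frac{Apn}{L}. \ee

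Next, setting $\bar{\mu}_n := \E{k(m)}$, I would expand it as the convex combination $\bar{\mu}_n = \frac{1}{a_{n+1}} \sum_{k=0}^{n}(a_{k+1}-a_k)\mu_k$. Under the assumption $C = 0$, each $\mu_k$ is bounded by some constant $M$, so $\bar{\mu}_n \le M$. But $\bar{\mu}_n \ge Apn/L \to \infty$, a contradiction. Hence $C \ne 0$; and since $\mu_n \ge 0$ forces $C \ge 0$, this yields $C > 0$. In fact one even recovers the quantitative bound $C \ge Ap/L$, by first verifying $\bar{\mu}_n = Cn + O(1)$ via summation by parts and the (necessarily geometric) growth of $a_n$, and then comparing the two asymptotics.

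The main obstacle is transferring the proportion bound supplied by the third hypothesis — which concerns legal fillings of a single fixed segment for each individual outside configuration — into an honest probability statement under the uniform distribution on legal decompositions. The clause ``regardless of the choices of summands outside the segment'' is exactly what delivers this: the bound is uniform over outsides, so taking the marginal expectation over the outside preserves it. Once this is in hand, the linearity-of-expectation argument and contradiction step are routine; the ``acts over a fixed distance'' hypothesis is not actually used in this abstract argument (it is used only to verify the third hypothesis for the particular sequences of interest).
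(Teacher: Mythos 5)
Your proof is correct, and it shares the paper's overall strategy: assume $C=0$, chop $\{1,\dots,n\}$ into $\lfloor n/L\rfloor$ disjoint segments, and use the third hypothesis (whose uniformity over outside configurations is exactly what converts the stated proportion into a conditional probability bound) to contradict boundedness of the mean. The counting step, however, is genuinely different. The paper fixes a large threshold $A = 1000(2d+1)$ and bounds the probability of having fewer than $A$ summands by $(1-p(A,L))^{n/L}$, concluding that at least half the integers in $[a_n,a_{n+1})$ have at least $A$ summands, so the mean exceeds $A/2 > 2d$; that product bound silently uses the fact that the event ``segments $1,\dots,j-1$ all have fewer than $A$ summands'' is measurable with respect to the configuration outside segment $j$, so the hypothesis can be applied segment by segment in sequence. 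You instead take $A=1$ and need only the marginal bound $\PP{X_j \ge A} \ge p$ together with linearity of expectation --- a pure first-moment argument with no independence-like input --- and it buys you the quantitative conclusion $C \ge Ap/L$ rather than a bare contradiction. The cost is the extra bookkeeping of averaging over $[0,a_{n+1})$ instead of $[a_n,a_{n+1})$, which requires identifying the uniform measure on $[0,a_{n+1})$ with the uniform measure on legal subsets of $\{a_1,\dots,a_n\}$; this identification is standard for these sequences and is made implicitly throughout the paper (e.g., in Appendix A), but it is not literally among the stated hypotheses, so it deserves a sentence. Your closing observation that the fixed-distance hypothesis is not load-bearing once the third bullet is assumed is accurate; the paper likewise consumes it only through that bullet.
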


\begin{remark}
Both $(s,b)$-Generacci Sequences and PLRS sequences satisfy all three conditions. To see that $(s,b)$-Generacci Sequences satisfy the third condition, given $A$ if we take $L\geq Asb$ then there is at least one legal way to choose $A$ summands from a segment of length $L$. Hence $p(A,L) > 0$.
\end{remark}

\begin{proof}[Proof of Theorem \ref{thm:gen_mean}]
Assume the claim is false and hence $C = 0$. We show that at least half of the integers have decompositions with at least twice the average number of summands, which contradicts the average number of summands.

For all $n$ sufficiently large, as $C=0$ we have $\mu_n \le 2d$. We choose $A$ to be much larger than $2d$, say $A = 1000(2d+1)$. Let $L$ be large relative to the fixed distance of the decomposition rule (for example, 100 times). For simplicity we assume $n$ is a multiple of $L$ so we may split decompositions up into $n/L$ segments of length $L$, though of course this is not essential and we could just ignore the last segment. We also assume $L$ is large enough so that the third condition holds, namely there is a constant $p(A,L) > 0$ such that in any segment of length $L$ the probability we choose fewer than $A$ summands is at most $1 - p(A,L) < 1$.

We claim that as $n\to\infty$, with probability 1 a decomposition has at least $A$ summands. To see this, we can bound the probability that it has fewer summands by noting that if that were true, it must have fewer than $A$ summands in each of the $n/L$ segments of length $L$. 
Thus
\be {\rm Prob}(m \in [a_n, a_{n+1})\ {\rm has\ less\ than\ }A\ {\rm summands})) \ \le \ \left(1 - p(A,L)\right)^{n/L}.\ee
Thus the probability an $m\in [a_n, a_{n+1})$ has at least $A$ summands tends to 1 as desired:
\be {\rm Prob}(m \in [a_n, a_{n+1})\ {\rm has\ at\ least\ }A\ {\rm summands})) \ \ge \ 1 -\left(1 - p(A,L)\right)^{n/L}.\ee

As $p(A,L) > 0$ is independent of $n$, by taking $n$ sufficiently large at least half of the $m$ in the interval have at least $A$ summands. If we assume all of these have exactly $A$ summands and the rest have 0 then we see that the average number of summands is at least $A/2$, or $500(2d+1)$. As this is far greater than $2d$ we have a contradiction.
\end{proof}

\subsection{The Variance}

We first define additional terminology (especially another notion of legal decompositions) that will help us state our result in great generality.

\begin{definition}
	
	A \textbf{block} is a nonempty finite sequence of nonnegative integers. The \textbf{size} of a block is the sum of the integers in the sequence, while the \textbf{length} of a  block is the number of integers in the sequence.
	
A \textbf{block-batch}, $\mathcal{S}$, is a finite set of blocks with the following characteristics:
\begin{enumerate}
\item[(i)] If two blocks have the same size, then they have the same length,
\item[(ii)]  $\BS$ contains a block of size 0, whose length is minimal among all blocks in $\BS$, and
\item[(iii)]  $\BS$ contains at least one block of size 1.
\end{enumerate}
Property (i) allows us to define a \textbf{length function}: $l(t)$ is the length of all blocks with size $t$.
\end{definition}

\begin{definition} \textbf{(Definition of  $(\mathcal{S}, \mathcal{T})$-legal decompositions)} Consider a strictly increasing sequence of positive integers $\{a_j\}_{j=1}^\infty$.
	Let $\BS$ be a given block-batch  and $\T$ be a given  finite set of blocks. Let $\LT$ be the maximum length of all blocks in $\T$ ($\LT = 0$ if $\T$ is empty). A decomposition of a positive integer $\omega\in\Z$, $\omega=\sum_{i=1}^m c_i a_{m+1-i}$, is {\bf$(\mathcal{S}, \mathcal{T})$-legal} if the coefficient sequence $\{c_i\}_{i=1}^m$ has $c_1>0$,
 the other $c_i \geq  0$, and one of the following two conditions holds:
	\begin{itemize}
		
		\item Condition 1: We have $m\leq  \LT$ and the sequence $\{c_i\}_{i = 1}^m$ is a block in $\T$.
		
		\item Condition 2: There exists $s\geq  1$ such that
		the sequence $\{c_i\}_{i = 1}^s$ is in block-batch $\BS$  and $\{b_i\}_{i=1}^{m-s}$ (with $b_i = c_{s+i}$) is $(\mathcal{S}, \mathcal{T})$-legal or empty.
		
	\end{itemize}
\end{definition}

We observe the following key properties.

\begin{enumerate}

\item If a $(\BS,\T)$-legal decomposition contains a $\T$ type block, then it must be the last block. So any $(\BS,\T)$-legal decomposition contains at most one $\T$ type block.

\item An $(\BS,\T)$-legal decomposition will stay $(\BS,\T)$-legal if an $\BS$ type block is added or removed and indices are shifted accordingly. Only whole blocks can be added and removed. Moreover added blocks cannot be inserted in the middle of existing blocks.

\end{enumerate}

\begin{remark} The usual legal decomposition rules for $(s,b)$-Generacci Sequences and Positive Linear Recurrence Sequences   can be viewed as $(\mathcal{S}, \mathcal{T})$-legal decompositions.     See Appendix D for examples showing how decompositions using several well-known sequences can be viewed as  $(\mathcal{S}, \mathcal{T})$-legal decompositions.
\end{remark}

Let $\Omega_n$ be the set of all $(\BS,\T)$-legal decompositions of integers in $[a_n,a_{n+1})$. Take an $(\BS,\T)$-legal decomposition $\omega\in\Omega_n$ and define the number of summands in the decomposition: $ \nsum({\omega}) = \sum_{i=1}^mc_i$. 
We will define several other random variables that will assist in our study of $\nsum$.
When $n > \LS+\LT$ (with $\LS$ the length of the longest block in $\BS$), there are at least two $\BS$ type blocks in each decomposition. We define the random variable $Z_n$ by setting $Z_n(\omega)$ equal to the size of the  last $\BS$ type block of $\omega \in \Omega_n$. Similarly, we define the random variable $L_n$ by setting $L_n(\omega)$ equal to the length of the  last $\BS$ type block of $\omega \in \Omega_n$.

\begin{theorem}\label{thm:C>0}\label{thm:gen_var}
	Consider a strictly increasing sequence of positive integers $\{a_n\}_{i=1}^\infty$ with $a_{i+1}-a_i\geq a_{j+1}-a_j$ for all $i\geq j$ and $a_{i+1}-a_i>1$ for all $i>\LT+1$, block-batch $\BS$, and set of blocks $\T$ such that all positive integers have unique $(\mathcal{S}, \mathcal{T})$-legal decompositions. If $\E{\nsum} = Cn+d+f(n)$ with $C>0$ and $f(n) = o(1)$, and if $\V{\nsum} = C'n+d'+o(1)$, then
 we can explicitly find $\kappa > 0$, such that $\V{\nsum} \geq  \kappa n$ for all $n \geq  \LT+2$. In other words, $C'> 0$.
\end{theorem}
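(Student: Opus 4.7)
The strategy is to mirror the contradiction argument of Theorem~\ref{thm:gen_mean}, but operating at the second-moment level and exploiting the block-concatenation structure of $(\mathcal{S},\mathcal{T})$-legal expansions. Every $\omega\in\Omega_n$ decomposes uniquely as $B_1 B_2\cdots B_r$, a concatenation of $\mathcal{S}$-blocks with a possible terminal $\mathcal{T}$-block, so $\nsum(\omega)=\sum_{i=1}^r \mathrm{size}(B_i)$. Since every block has length at most $L_{\max}:=\max\{\LS,\LT\}$, each decomposition of $m\in[a_n,a_{n+1})$ contains $r\ge n/L_{\max}$ blocks, giving linearly many ``degrees of freedom'' from which variability in $\nsum$ can be extracted.

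The first main step is to identify a Markov-chain description of the decomposition via the last-$\mathcal{S}$-block variables $(L_n,Z_n)$. Peeling off the final $\mathcal{S}$-block sends $\omega\in\Omega_n$ to an element of $\Omega_{n-L_n}$, and because $(\mathcal{S},\mathcal{T})$-legal decompositions act over a fixed distance, only a bounded ``footprint'' of the prefix determines which terminal blocks are legal. Uniqueness of $(\mathcal{S},\mathcal{T})$-legal decompositions then makes the induced transition on terminal-block types irreducible on a finite state space, with a unique stationary distribution $(L_\infty,Z_\infty)$. Properties (ii) and (iii) of the block-batch $\mathcal{S}$ guarantee that both a size-$0$ and a size-$1$ block appear with positive stationary probability, so $\V{Z_\infty}>0$. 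Iterating the peeling step represents $\nsum$ as a renewal-type sum of roughly $n/\E{L_\infty}$ block sizes, each distributed asymptotically like $Z_\infty$.

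The final step is to convert this block-level variability into a linear lower bound on $\V{\nsum}$ by applying the law of total variance to the peeling identity $\nsum \overset{d}{=} Z_n + Y'_{n-L_n}$. Under the hypothesized forms $\E{\nsum}=Cn+d+o(1)$ and $\V{\nsum}=C'n+d'+o(1)$, the leading-order terms collapse into a self-consistency relation of the shape $C'\,\E{L_\infty} \;=\; \V{Z_\infty - C L_\infty} + o(1)$. The right-hand side is strictly positive unless $Z_\infty - C L_\infty$ is almost surely constant, i.e.\ unless every terminal-block pair $(s,l(s))$ in the stationary support lies on a single line $s = C\, l(s) + \mathrm{const}$; the hypotheses that the differences $a_{i+1}-a_i$ are non-decreasing and eventually $>1$, combined with the uniqueness of $(\mathcal{S},\mathcal{T})$-legal decompositions, force enough variety among the blocks of $\mathcal{S}$ to exclude this degenerate linear case, yielding the explicit $\kappa = \V{Z_\infty-CL_\infty}/\E{L_\infty}>0$.

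The main obstacle is two-fold: first, making the Markov mixing \emph{uniform} in $n$ so that the implicit covariance terms in the total-variance identity really are $o(1)$ and do not pollute the sign of the leading term; and second, pinning down precisely which combinatorial features of $\mathcal{S}$ force $Z_\infty-CL_\infty$ to be non-degenerate. The first difficulty should be handled by spectral/transfer-matrix estimates on the finite state space of terminal-block types, where irreducibility gives a spectral gap. The second will require a careful case analysis exploiting the integer-gap condition on $\{a_n\}$ to exclude the trivial ``linear'' block batches in which size and length coincide up to affine rescaling; this degenerate possibility is the one feature of the statement that genuinely resists a purely soft argument and must be attacked using the arithmetic of the underlying sequence.
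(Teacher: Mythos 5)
Your skeleton is close to the paper's: the paper also peels off the last $\BS$-type block (via an explicit bijection $\Upsilon_{n,\mathfrak{b}}\leftrightarrow\Omega_{n-l(t)}$), conditions on its size $Z_n$ and length $L_n$, and extracts the variance from the fluctuation variable $K_n=Z_n-CL_n+f(n-L_n)$, which is the same quantity driving your self-consistency relation $C'\,\E{L_\infty}=\V{Z_\infty-CL_\infty}+o(1)$. But the crux of the theorem is exactly the step you defer: showing $\V{Z_\infty - CL_\infty}>0$. You propose to exclude the degenerate ``all blocks on a line'' case by a case analysis on the arithmetic of $\{a_n\}$ and concede you have not done it; that is a genuine gap, and moreover it is the wrong mechanism. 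The correct argument is soft: since $\E{K_n}=f(n)=o(1)$, a degenerate $Z_\infty-CL_\infty$ would have to be the constant $0$, i.e.\ $t=Cl(t)$ for every block in the support; but $\BS$ contains a block of size $0$ and minimal length $l(0)\ge 1$, and the gap-monotonicity hypothesis $a_{i+1}-a_i\ge a_{j+1}-a_j$ gives $\PP{Z_n=0}\ge 1/|\BS|$, so $(Z_n-CL_n)^2$ equals $C^2l(0)^2>0$ with probability bounded below (using $C>0$ from the mean theorem). This yields $\V{K_n}\ge C^2l(0)^2/(2|\BS|)$ uniformly for large $n$ with no case analysis. The gap condition on $\{a_n\}$ enters only here, not in ruling out ``linear block batches.''

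Two further problems. First, your stationary-distribution/renewal framework presupposes that $\PP{Z_n=t}$ converges as $n\to\infty$; since $\PP{Z_n=t}=|\B_t|(H_{n-l(t)+1}-H_{n-l(t)})/(H_{n+1}-H_n)$, that convergence requires Perron--Frobenius--type control of the counting sequence $H_n$ --- precisely the root analysis this section is designed to avoid --- and your irreducibility and spectral-gap claims for the induced chain are asserted, not proved. (The peeling bijection shows any element of $\Omega_{n-l(t)}$ can precede any last block, so there is no Markov dependence to analyze; but then the renewal heuristic ``variance $=$ sum of block variances'' still needs the cross-block covariances controlled, which your sketch does not do and which the paper handles by a strong induction $\V{Y_r}\ge\kappa r$ for $\LT+2\le r<n$ rather than by mixing estimates.) Second, your $\kappa=\V{Z_\infty-CL_\infty}/\E{L_\infty}$ only gives $\V{\nsum}\ge\kappa n$ asymptotically; the claim for all $n\ge\LT+2$ needs the separate observation that $a_{n+1}-a_n>1$ forces $\V{\nsum}>0$ at each small $n$, so that $\kappa$ can be taken as a minimum over finitely many initial terms together with the uniform bound on $\V{K_n}$.
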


\emph{We assume the hypotheses of this theorem hold in all lemmas and corollaries below.} Note that $(s,b)$-Generacci  and PLRS Sequences satisfy these hypotheses.

We need additional notation. Let $\ZS$ be the maximum size of all blocks in $\BS$. For all $0 \leq  t \leq \ZS$, define $\B_t$ to be the subset of blocks in $\BS$ whose size is t. For $\mathfrak{b}\in \B_t$, we define $\Upsilon_{n,\mathfrak{b}} = \{\omega\in \Omega_n \mid \text{the last $\BS$ type block is }\mathfrak{b}\}$,


\begin{lemma}\label{lem:collapsingblock}
	Let $n > \LS+\LT$. Define $\phi_{t,\mathfrak{b}}(\omega)$ to be the decomposition that results from removing the  last $\BS$ type block of $\omega$ and shifting indices appropriately.  Then $\phi_{t,\mathfrak{b}}$ is a bijection between $\Upsilon_{n,\mathfrak{b}}$ and $\Omega_{n-l(t)}$.
\end{lemma}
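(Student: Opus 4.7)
The plan is to write down an explicit inverse $\psi_{t,\mathfrak{b}}:\Omega_{n-l(t)}\to\Upsilon_{n,\mathfrak{b}}$ by inserting the block $\mathfrak{b}$ back into the slot vacated by $\phi_{t,\mathfrak{b}}$. Three facts drive the argument: (i) because $n>\LS+\LT$, every $\omega\in\Upsilon_{n,\mathfrak{b}}$ has a parsing with at least two $\BS$-type blocks, so a deletion is available; (ii) inserting or removing a whole $\BS$-type block preserves $(\BS,\T)$-legality by recursive application of Condition 2 (this is exactly the ``added/removed'' key property listed after the definition); and (iii) the uniqueness hypothesis forces any $(\BS,\T)$-legal decomposition with leading index $m$ and $c_1>0$ to represent an integer in $[a_m,a_{m+1})$, since an integer outside this interval would admit a second legal decomposition with a different leading index.

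First I would verify that $\phi_{t,\mathfrak{b}}$ maps into $\Omega_{n-l(t)}$. Given $\omega\in\Upsilon_{n,\mathfrak{b}}$ parsed as $(\BS_1)\cdots(\BS_k)$, optionally followed by a $\T$-block, fact (i) gives $k\geq 2$ with $\BS_k=\mathfrak{b}$. Deleting $\BS_k$ and shifting indices leaves a nonempty coefficient sequence of length $n-l(t)$, whose leading coefficient still equals $c_1>0$ because the first $\BS$-block is untouched. Fact (ii) says the new sequence is $(\BS,\T)$-legal, and fact (iii) places the corresponding integer in $[a_{n-l(t)},a_{n-l(t)+1})$.

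Next I would define $\psi_{t,\mathfrak{b}}$. For $\omega'\in\Omega_{n-l(t)}$, the inequality $n-l(t)>\LT$ (which follows from $n>\LS+\LT\geq l(t)+\LT$) rules out the Condition 1 case, so $\omega'$ parses as $(\BS'_1)\cdots(\BS'_{k'})$, optionally followed by a $\T$-block, with $k'\geq 1$. Let $\psi_{t,\mathfrak{b}}(\omega')$ be obtained by inserting $\mathfrak{b}$ immediately after $\BS'_{k'}$ and before any $\T$-block. The result has length $n$, positive leading coefficient, is $(\BS,\T)$-legal by (ii) with $\mathfrak{b}$ as its last $\BS$-block, and by (iii) represents an integer in $[a_n,a_{n+1})$; hence $\psi_{t,\mathfrak{b}}(\omega')\in\Upsilon_{n,\mathfrak{b}}$. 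A direct check shows that $\phi_{t,\mathfrak{b}}\circ\psi_{t,\mathfrak{b}}$ and $\psi_{t,\mathfrak{b}}\circ\phi_{t,\mathfrak{b}}$ are both the identity.

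The step I expect to be the main obstacle is controlling the integer value after the shift: removing a $\BS$-block and re-indexing does not merely subtract the block's contribution from $\omega$, because the surviving coefficients then multiply smaller $a_j$'s. Rather than compute the new value directly, fact (iii) sidesteps this by pinning the value to the unique interval consistent with the new leading index. The remaining care is in the parsing bookkeeping, particularly the boundary case where $\omega$ or $\omega'$ has a trailing $\T$-block, to make sure insertion and deletion happen at the same slot in each direction.
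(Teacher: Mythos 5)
Your proposal is correct and takes essentially the same route as the paper: the paper likewise establishes the bijection by noting that insertion of $\mathfrak{b}$ after the last $\BS$-type block is a two-sided inverse to the removal map $\phi_{t,\mathfrak{b}}$. The only difference is that you spell out the legality and interval-membership checks (via the key properties following the definition and the uniqueness hypothesis) that the paper's proof leaves implicit.
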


The proof follows by straightforward counting; see Appendix D.

\begin{corollary}\label{cor:var1}
If $\E{\nsum} = Cn+d+f(n)$  then
 \begin{align}\E{\nsum|Z_n = t}\ =\ C(n-l(t)) + d + f(n-l(t)) + t,\label{1}\end{align}
 \begin{align}\E{\nsum^2|Z_n = t}\ =\ \E{Y_{n-l(t)}^2} + 2t[C(n-l(t)) + d + f(n-l(t))] + t^2\label{2},\end{align}
and  when $K_n:=Z_n(\omega)+f(n-L_n(\omega))-CL_n(\omega)$ the \begin{align}\E{K_n}\ = \ f(n)\label{4}\end{align}
\end{corollary}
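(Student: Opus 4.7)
The plan is to exploit Lemma \ref{lem:collapsingblock} conditionally: removing the last $\BS$-type block gives a size-preserving bijection, so conditioning on the data of that block reduces $\nsum$ on $\Omega_n$ to $Y_{n-l(t)}$ on $\Omega_{n-l(t)}$ plus a deterministic contribution. Everything in the Corollary follows from this reduction together with the law of total expectation; the only care required is book-keeping across the subfamilies $\Upsilon_{n,\mathfrak{b}}$ indexed by blocks $\mathfrak{b}\in\B_t$ of common size $t$.

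First I would verify (\ref{1}). Fix $t$ with $\B_t\neq\emptyset$, and note that for any $\mathfrak{b}\in\B_t$ the bijection $\phi_{t,\mathfrak{b}}:\Upsilon_{n,\mathfrak{b}}\to\Omega_{n-l(t)}$ removes exactly the block $\mathfrak{b}$, whose sum of coefficients is by definition $t$. Hence for every $\omega\in\Upsilon_{n,\mathfrak{b}}$ we have $\nsum(\omega)=Y_{n-l(t)}(\phi_{t,\mathfrak{b}}(\omega))+t$. Since this holds uniformly in $\mathfrak{b}\in\B_t$ and $\phi_{t,\mathfrak{b}}$ is a bijection, the conditional distribution of $\nsum-t$ given $Z_n=t$ is exactly the (uniform) distribution of $Y_{n-l(t)}$ on $\Omega_{n-l(t)}$. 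Taking expectations and invoking the hypothesis $\E{Y_{n-l(t)}}=C(n-l(t))+d+f(n-l(t))$ yields (\ref{1}). The identity (\ref{2}) is then immediate from the same pointwise relation $\nsum=Y_{n-l(t)}+t$ after squaring and taking conditional expectation, since $t$ is constant on the conditioning event.

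Next I would deduce (\ref{4}) from (\ref{1}) by the law of total expectation. Summing (\ref{1}) against the law of $Z_n$ gives
\begin{align*}
\E{\nsum}\ &=\ \sum_{t}\PP{Z_n=t}\bigl[C(n-l(t))+d+f(n-l(t))+t\bigr]\\
&=\ Cn+d+\E{Z_n-CL_n+f(n-L_n)}\ =\ Cn+d+\E{K_n},
\end{align*}
where I used that $L_n(\omega)=l(Z_n(\omega))$ by property (i) of a block-batch. Comparing with the hypothesis $\E{\nsum}=Cn+d+f(n)$ forces $\E{K_n}=f(n)$, which is precisely (\ref{4}).

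I expect no serious obstacle: the content of the corollary lies entirely in the bijection already established in Lemma \ref{lem:collapsingblock}. The one step that could be glossed incorrectly is the translation from a bijection on each $\Upsilon_{n,\mathfrak{b}}$ to a single conditional-distribution statement given $\{Z_n=t\}$, which requires observing that $Y_{n-l(t)}\circ\phi_{t,\mathfrak{b}}$ does not depend on the specific $\mathfrak{b}\in\B_t$ (only on the size $t$, through the common length $l(t)$). Once that is noted, (\ref{1})--(\ref{2}) are a one-line computation and (\ref{4}) is a one-line consequence of total expectation.
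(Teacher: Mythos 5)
Your proposal is correct and follows essentially the same route as the paper: both arguments use the bijection of Lemma \ref{lem:collapsingblock} to get the pointwise identity $\nsum=Y_{n-l(t)}+t$ on each $\Upsilon_{n,\mathfrak{b}}$, average over $\mathfrak{b}\in\B_t$ (which is exactly your observation that the pushforward depends only on $t$ through $l(t)$) to obtain \eqref{1} and \eqref{2}, and then derive \eqref{4} by total expectation against the law of $Z_n$ and comparison with $\E{\nsum}=Cn+d+f(n)$. The only difference is cosmetic: the paper conditions first on the specific block $\mathfrak{b}$ and then sums, while you phrase it directly as a conditional-distribution statement given $\{Z_n=t\}$.
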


The proof relies upon the bijection between $\Upsilon_{n,\mathfrak{b}}$ and $\Omega_{n-l(t)}$ which allows us to conclude \linebreak $\E{\nsum|\text{the  last $\BS$ type block is } \mathfrak{b}} = \E{Y_{n-l(t)} + t}$. The final form of the equations are a result of straightforward algebraic manipulation and rules of probability. The complete proof can be found in  Appendix D.

\begin{lemma}
Assume that all integers in $\Omega_n$ have unique $(\BS,\T)$-legal decompositions with respect to the sequence $\{a_n\}$. Then for  $n > \LS+\LT$
	\begin{equation}
		\PP{Z_n = t} \ =\ |\B_t|\frac{H_{n - l(t) + 1} - H_{n - l(t)}}{H_{n+1} - H_n}.
	\end{equation}	
\end{lemma}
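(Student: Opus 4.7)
The plan is to reduce the probability to a ratio of set sizes, apply the bijection from Lemma \ref{lem:collapsingblock} to evaluate each size, and then rewrite those sizes using the cumulative counting function $H$.

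First, I would partition the event $\{Z_n = t\}$ according to which particular block of size $t$ appears last. Since all integers in the range under consideration have unique $(\BS,\T)$-legal decompositions, $\Omega_n$ is in bijection with $[a_n,a_{n+1})\cap\Z$, and I may treat the decomposition as drawn uniformly from $\Omega_n$. Hence
\[
\PP{Z_n = t}\ =\ \sum_{\mathfrak{b}\in\B_t}\PP{\text{last }\BS\text{-block is }\mathfrak{b}}\ =\ \sum_{\mathfrak{b}\in\B_t}\frac{|\Upsilon_{n,\mathfrak{b}}|}{|\Omega_n|}.
\]

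Second, I would invoke Lemma \ref{lem:collapsingblock}: for each $\mathfrak{b}\in\B_t$, the map $\phi_{t,\mathfrak{b}}$ is a bijection from $\Upsilon_{n,\mathfrak{b}}$ onto $\Omega_{n-l(t)}$, so $|\Upsilon_{n,\mathfrak{b}}|=|\Omega_{n-l(t)}|$. The summand is therefore independent of $\mathfrak{b}$, and the sum collapses to
\[
\PP{Z_n = t}\ =\ |\B_t|\cdot\frac{|\Omega_{n-l(t)}|}{|\Omega_n|}.
\]

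Third, I would identify $|\Omega_k|$ with $H_{k+1}-H_k$. Because uniqueness of $(\BS,\T)$-legal decompositions holds, $H_k$ (the number of such decompositions representing integers less than $a_k$) satisfies $H_{k+1}-H_k=a_{k+1}-a_k=|\Omega_k|$. Substituting for both numerator and denominator yields the claimed identity. The only point requiring care is the hypothesis $n>\LS+\LT$: this ensures that every $\omega\in\Omega_n$ contains at least two $\BS$-type blocks, so ``the last $\BS$-block'' is well-defined, the collapse in Lemma \ref{lem:collapsingblock} does not reduce $\omega$ to a $\T$-only decomposition, and the target index $n-l(t)$ is positive. Apart from this bookkeeping, the argument is a direct unpacking of definitions combined with the bijection already established, and no genuinely new obstacle arises.
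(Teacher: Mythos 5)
Your proposal is correct and follows the same route as the paper's proof: partition the event over the blocks $\mathfrak{b}\in\B_t$, apply the bijection $\phi_{t,\mathfrak{b}}$ from Lemma \ref{lem:collapsingblock} to get $|\Upsilon_{n,\mathfrak{b}}|=|\Omega_{n-l(t)}|$, and identify $|\Omega_k|$ with $H_{k+1}-H_k$ via uniqueness of decompositions. Your remarks on the role of the hypothesis $n>\LS+\LT$ are a helpful (and accurate) elaboration of bookkeeping the paper leaves implicit.
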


\begin{proof} We have
	\begin{equation}\PP{Z_n = t} \ =	\ \sum_{\mathfrak{b}\in\B_t} \frac{|\Upsilon_{n,\mathfrak{b}}|}{|\Omega_n|} \ =\ \sum_{\mathfrak{b}\in\B_t} \frac{|\Omega_{n-l(t)}|}{|\Omega_n|} \ =\  |\B_t| \frac{H_{n - l(t) + 1} - H_{n - l(t)}}{H_{n+1} - H_n}.
	\end{equation}
\end{proof}

\begin{corollary}\label{cor:p0}
Consider a strictly increasing sequence of positive integers $\{a_n\}$ with $a_{i+1}-a_i\geq a_{j+1}-a_j$ for all $i\geq j$. Then for $n > \LS+\LT$, $\PP{Z_n = 0} \geq  1/|\BS|$.
\end{corollary}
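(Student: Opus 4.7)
The plan is to read off $\PP{Z_n=0}$ directly from the formula in the preceding lemma and then use two structural facts about the block-batch $\BS$: (i) $\BS$ contains at least one block of size $0$, so $|\B_0|\geq 1$, and (ii) the block of size $0$ has \emph{minimal length} among all blocks in $\BS$, i.e.\ $l(0)\leq l(t)$ for every $t$ occurring as a block-size. These two facts, combined with the monotonicity hypothesis $a_{i+1}-a_i\geq a_{j+1}-a_j$ for $i\geq j$, will pin down $\PP{Z_n=0}$ as the ``largest'' summand in the identity expressing that the $\PP{Z_n=t}$'s sum to one.

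More concretely, first I would use the identity $\sum_{t}\PP{Z_n=t}=1$, which by the previous lemma rewrites as
\begin{equation}
\sum_{t}|\B_t|\bigl(H_{n-l(t)+1}-H_{n-l(t)}\bigr) \ = \ H_{n+1}-H_n,
\end{equation}
where the sum ranges over all sizes $t$ realized by a block of $\BS$. Because $H_{n+1}-H_n=|\Omega_n|=a_{n+1}-a_n$ (unique decompositions mean $H_n=a_n$), the monotonicity assumption on consecutive differences of $\{a_n\}$ translates into monotonicity of the gap function $m\mapsto H_{m+1}-H_m$. Since $l(0)\leq l(t)$ for every $t$, we get
\begin{equation}
H_{n-l(t)+1}-H_{n-l(t)} \ \leq \ H_{n-l(0)+1}-H_{n-l(0)}
\end{equation}
for each $t$ appearing in the sum.

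Substituting this bound back into the identity yields
\begin{equation}
H_{n+1}-H_n \ \leq \ \left(\sum_t |\B_t|\right)\bigl(H_{n-l(0)+1}-H_{n-l(0)}\bigr)\ = \ |\BS|\bigl(H_{n-l(0)+1}-H_{n-l(0)}\bigr),
\end{equation}
which rearranges to $(H_{n-l(0)+1}-H_{n-l(0)})/(H_{n+1}-H_n)\geq 1/|\BS|$. Applying the lemma once more with $t=0$ gives
\begin{equation}
\PP{Z_n=0} \ = \ |\B_0|\cdot\frac{H_{n-l(0)+1}-H_{n-l(0)}}{H_{n+1}-H_n}\ \geq\ \frac{|\B_0|}{|\BS|}\ \geq\ \frac{1}{|\BS|},
\end{equation}
since $|\B_0|\geq 1$ by property (iii) of being a block-batch (well, property that a size-$0$ block exists), which is exactly the claim.

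I do not expect any real obstacle here: the argument is purely algebraic once one recognizes that the zero-sized block is the ``cheapest'' block in the sense of consuming the fewest indices. The only mild subtlety is making sure the range of $t$ in the sum really matches the sizes realized in $\BS$ and that $n>\LS+\LT$ is large enough for each $H_{n-l(t)+1}-H_{n-l(t)}$ to be well-defined and nonnegative, both of which follow from the hypotheses and the previous lemma.
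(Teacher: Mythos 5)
Your proof is correct and is essentially the paper's own argument: both rest on the formula from the preceding lemma, the minimality of $l(0)$ among block lengths, and the monotonicity of the gaps $H_{m+1}-H_m$ inherited from the hypothesis on $\{a_n\}$, combined with the fact that the block probabilities sum to $1$. The only cosmetic difference is that you aggregate by block size $t$ (weighting by $|\B_t|$) while the paper compares $\PP{\text{last block is }\emptyset}$ against $\PP{\text{last block is }\mathfrak{b}}$ block by block; these are the same computation.
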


The proof is a straightforward application of the lemma; see  Appendix D.

Finally we consider the variance by first using
 $\E{K_n}$ to estimate $\V{K_n}$.

\begin{lemma}\label{lem:varyn}
	For large $n$, $ \V{K_n} > \frac{C^2l(0)^2}{2|\BS|}>0$.
\end{lemma}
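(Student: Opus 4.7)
The plan is to exploit the ``hidden mass'' at $Z_n = 0$. By Corollary~\ref{cor:p0} this event has probability at least $1/|\BS|$, and conditional on it the random variable $K_n$ is essentially constant and far from its mean. First, I observe that on the event $\{Z_n=0\}$ the length $L_n$ must equal $l(0)$, so by the definition of $K_n$ we have
\[
K_n \ = \ 0 + f(n - l(0)) - C\, l(0) \qquad\text{whenever } Z_n=0.
\]
Meanwhile, by \eqref{4} of Corollary~\ref{cor:var1}, $\E{K_n} = f(n)$. Hence on the event $\{Z_n=0\}$ the deviation satisfies
\[
K_n - \E{K_n} \ = \ -C\,l(0) + \bigl(f(n - l(0)) - f(n)\bigr).
\]

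Since $f(n) = o(1)$, for every $\varepsilon>0$ there is an $N=N(\varepsilon)$ such that for all $n\ge N$ both $|f(n)|$ and $|f(n-l(0))|$ are smaller than $\varepsilon/2$; in particular $|f(n-l(0)) - f(n)| \le \varepsilon$. Fix $\varepsilon = Cl(0)(1-1/\sqrt{2})/2 > 0$, which is positive because $C>0$ and $l(0)\ge 1$ (blocks are nonempty by definition). Then on $\{Z_n=0\}$,
\[
\bigl(K_n - \E{K_n}\bigr)^2 \ \ge \ \bigl(C\,l(0) - \varepsilon\bigr)^2 \ > \ \tfrac{1}{2}\,C^2 l(0)^2.
\]

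The main (and essentially only) step is now to pass this pointwise bound to the variance via a restriction inequality. Since $(K_n - \E{K_n})^2 \ge 0$ always, I restrict to the event $\{Z_n=0\}$:
\[
\V{K_n} \ = \ \E{(K_n-\E{K_n})^2} \ \ge \ \E{(K_n-\E{K_n})^2 \mathbf{1}_{\{Z_n=0\}}} \ \ge \ \PP{Z_n=0}\cdot \tfrac{1}{2}C^2 l(0)^2.
\]
Combining with $\PP{Z_n=0} \ge 1/|\BS|$ from Corollary~\ref{cor:p0}, I obtain
\[
\V{K_n} \ > \ \frac{C^2\,l(0)^2}{2|\BS|}
\]
for all sufficiently large $n$, which is the desired bound. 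The ``hard part'' here is really just bookkeeping: one must remember that the length $L_n$ is determined by $Z_n$ via the length function (property (i) of a block-batch), so conditioning on $\{Z_n=0\}$ forces $L_n=l(0)$ and turns $K_n$ into a deterministic quantity up to the vanishing error term $f(n-l(0))$; the rest is a one-line application of Chebyshev-type restriction together with Corollary~\ref{cor:p0} and the positivity $C>0$ supplied by Theorem~\ref{thm:gen_mean}.
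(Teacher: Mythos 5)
Your proof is correct and follows essentially the same route as the paper's: both arguments isolate the event $\{Z_n=0\}$, use Corollary~\ref{cor:p0} to give it mass at least $1/|\BS|$, observe that on this event $K_n$ sits at distance roughly $Cl(0)$ from its mean $\E{K_n}=f(n)=o(1)$, and give up a factor of $2$ to absorb the vanishing error terms. The only difference is organizational: the paper first shows $\V{K_n}-\E{(Z_n-CL_n)^2}\to 0$ and then bounds that second moment from below by its $t=0$ term, whereas you restrict the centered second moment directly to $\{Z_n=0\}$ --- a slightly more streamlined bookkeeping of the same estimate.
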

\begin{proof}
	For all $n > \LS+\LT$, we have
	\begin{align}
		\V{K_n} &\ =\ \E{K_n^2} - \left(\E{K_n}\right)^2\nonumber\\
		&\ =\ \left(\E{(Z_n - CL_n + f(n - L_n))^2}\right) - \left(f(n)\right)^2\nonumber \\
		&\ =\ \left(\E{(Z_n - CL_n)^2}+\E{2(Z_n - CL_n)\cdot f(n - L_n)} + \E{f(n - L_n)^2}\right) - \left(f(n)\right)^2.
	\end{align}
	Note $0\leq L_n \leq  \LS$ and that $Z_n - aL_n$ is bounded since $ -C\LS \leq  Z_n - CL_n \leq  \ZS$. Also we know $f(n) = o(1)$. Thus
	\[ \lim\limits_{n \to \infty}\E{2(Z_n - CL_n)\cdot f(n - L_n)}\ ,\ \lim\limits_{n \to \infty}\E{f(n - L_n)^2}\ ,\ \lim\limits_{n \to \infty}  \left(f(n)\right)^2 \ =\ 0.\]
Hence
	\begin{equation}\label{5}
		\lim\limits_{n \to \infty}\left(\V{K_n} - \E{(Z_n - CL_n)^2}\right)\ =\ 0.
	\end{equation}
	On the other hand, for all $n > \LS+\LT$ we have
	\begin{align}
		\E{(Z_n - CL_n)^2} &\ =\ \sum\limits_{t = 0}^{\ZS} \PP{Z_n = t} \cdot \left(t - Cl(t)\right)^2\nonumber\\
		&\ \geq \ \PP{Z_n = 0} \cdot \left(0 - Cl(0)\right)^2 \ \geq\  \frac{C^2l(0)^2}{|\BS|},\label{6}
\end{align}
	where the last inequality comes from Corollary \ref{cor:p0}.
	
	By Equation $\eqref{5}$, we know there must exist an $N > \LS+\LT$ such that for all $n > N$, $|\V{K_n} -  \E{(Z_n - CL_n)^2}| < \frac{C^2l(0)^2}{2|\BS|}$, so $\V{K_n} -  \E{(Z_n - CL_n)^2} > -\frac{C^2l(0)^2}{2|\BS|}$. Then $\eqref{6}$ implies $\V{K_n} > \frac{C^2l(0)^2}{2|\BS|}>0$ for all $n > N> \LS+\LT$.
\end{proof}

Finally, we choose $\kappa$. For $N$ as found in Lemma \ref{lem:varyn}, define $\hat{N}:=\max\{\LS+\LT+2,N\}$. Next   let
\begin{equation}
\kappa \ =\ \text{min}\left\{\frac{\V{Y_{\LT+2}}}{{\LT+2}}, \frac{\V{Y_{\LT+3}}}{\LT+2}, \dots, \frac{\V{Y_{\hat{N}}}}{\hat{N}}, \frac{C^2l(0)^2}{2|\BS|\LS}\right\}.\end{equation}
For all $n > \LT+1$, $a_{n + 1} - a_n > 1$, so there are at least two integers in $[a_n, a_{n+1})$. Since the $(\BS,\T)$-legal decomposition of $a_n$ has only one summand while that of $a_n + 1$ has two or more summands, $\V{\nsum}$ is nonzero when $n > \LT+1$. Hence, $\kappa > 0$.

Now we are ready to prove Theorem \ref{thm:C>0}.

\begin{proof}[Proof of Theorem \ref{thm:C>0}] We proceed by strong induction.

{\bf Basis step:} For  $n = \LT+2, \LT+3, \dots, \hat{N}$, $\V{\nsum}>\kappa n$ by definition of $\kappa$.

{\bf Induction step:} Assume $\V{Y_r} \geq  \kappa r$ for $\LT+2 \leq  r < n$. We only need to consider the cases when $n > \hat{N}\geq \LS+\LT+2$.    	So for all $0 \leq t \leq \ZS$, $n > n-l(t) \geq  n - \LS \geq  \LT+2$.

	By \eqref{2}  we have
	\begin{align}
		\E{\nsum^2} &\ =\ \sum\limits_{t=0}^{\ZS}\PP{Z_n = t}\cdot\E{\nsum^2|Z_n = t}\nonumber\\
		&\ =\ \sum\limits_{t=0}^{\ZS}\PP{Z_n = t} \cdot \left(\E{Y_{n-l(t)}^2} + 2t[C(n-l(t)) + d + f(n-l(t))] + t^2\right),\label{ekn2}
	\end{align} and from the inductive hypothesis we have
	\begin{align}
		\E{Y_{n-l(t)}^2} &\ =\ \V{Y_{n-l(t)}} + \left(\E{Y_{n-l(t)}}\right)^2\nonumber\\
	&\	\geq \ \kappa(n-l(t)) + \left(C(n-l(t))+d+f(n-l(t))\right)^2.\label{ind}
\end{align}

Combining \eqref{ekn2} and \eqref{ind} results in an equation with two parts. One is independent on $t$, while the other is of the form of $Z_n + f(n - L_n) - CL_n$, which is exactly $K_n$. We find
	\begin{align}
		\E{\nsum^2} &\ \geq\  \sum\limits_{t=0}^{\ZS}\PP{Z_n = t}  \Bigg[\kappa(n-l(t)) + \left(C(n-l(t))+d+f(n-l(t))\right)^2\nonumber\\
		&\qquad\qquad\qquad\qquad+ 2t[C(n-l(t)) + d + f(n-l(t))] + t^2\Bigg]\nonumber\\
		& \ =\ (Cn+d)^2 + \kappa n + \sum\limits_{t = 0}^{\ZS}\PP{Z_n = t}\cdot\left(t+f(n-l(t))-Cl(t)\right)^2\nonumber\\
		&\qquad\qquad+ 2(Cn+d)\sum\limits_{t=0}^{\ZS}\PP{Z_n = t}\cdot\left(t+f(n-l(t)) -Cl(t)\right) - \kappa\sum\limits_{t=0}^{\ZS}\PP{Z_n = t}\cdot l(t)\nonumber\\
		&\ =\ (Cn+d)^2 + \kappa n + \E{(Z_n+f(n-L_n) -CL_n)^2} + 2(Cn+d)f(n) - \kappa\E{L_n},\label{ekn3}
	\end{align} with the last equality coming from $\eqref{4}$.
	
Finally, \eqref{ekn3}, the definition of $\kappa$, and  Lemma \ref{lem:varyn} imply
	\begin{align}
		\V{\nsum} - \kappa n &\ =\ \E{\nsum^2} - \left(\E{\nsum}\right)^2 - \kappa n\nonumber\\
		&  \ \geq\  \E{(Z_n+f(n-L_n) -CL_n)^2} - \kappa\E{L_n}-(f(n))^2\nonumber\\
		&\ =\ \E{K_n^2} - \kappa\E{L_n} - \left(\E{K_n}\right)^2\nonumber\\
		&\ =\ \V{K_n} - \kappa\E{L_n}\nonumber\\
		& \ \geq\  \V{K_n} - \kappa\LS\nonumber\\
		& \ \geq\ \frac{C^2l(0)^2}{2|\BS|}-\frac{C^2l(0)^2}{2|\BS|\LS}\LS \ =\ 0,
	\end{align} and therefore $\V{\nsum} \geq  \kappa n$.
\end{proof}


\subsection{Generating Function for $(s,b)$-Generacci Legal Decompositions}\label{sec:gaussianSB}
Let $p_{n,k}$ (with $n,k\geq0$) denote the number of $m\in[a_{(n-1)b+1},a_{nb+1})$ whose $(s,b)$-Generacci legal decomposition contains exactly $k$ summands, where $a_{nb+1}$ is the first entry in the $(n+1)^{\text{st}}$ bin of size $b$.

\begin{proposition}\label{recurrencefromhell}
Let $n,k\geq 0$. Then
\begin{align}
p_{n,k}&\ = \ \begin{cases}
1&\mbox{{\rm if} $n=k=0$}\\
b&\mbox{{\rm if} $1\leq n\leq s$ and $k=1$}\\
b\cdot q_{n-(s+1),k-1}&\mbox{{\rm if} $n\geq s+1$ {\rm and} $1\leq k\leq \frac{n+s}{s+1}$}\\
0&\mbox{{\rm otherwise,}}
\end{cases}
\label{FQpnk}\end{align}
where $q_{n,k} $ (with ${n,k} \geq 0$) is the number of $m \in [0,a_{nb+1})$ whose $(s,b)$-Generacci legal decomposition contains exactly $k$ summands. Set $F(x,y)=\sum_{n=0}^{\infty} \sum_{k=0}^{n_{*}}p_{n,k}x^ny^k$ with $n_*=\lceil\frac{n+s}{s+1}\rceil$. Then
\begin{align}F(x,y)&\ = \ 1 + \frac{byx}{1-x-byx^{s+1}}.\label{functionF(x,y)}\end{align}
\end{proposition}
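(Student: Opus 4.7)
The plan is to first establish the recurrence \eqref{FQpnk} combinatorially from the bin structure of $(s,b)$-Generacci legal decompositions, then translate the recurrence into a functional equation for $F(x,y)$ and solve it.

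\emph{Step 1: The recurrence.} Any $m\in[a_{(n-1)b+1},a_{nb+1})$ has a unique $(s,b)$-Generacci decomposition whose largest summand lies in bin $\mathcal{B}_n$, since $m\ge a_{(n-1)b+1}$ forces us to use some element of $\mathcal{B}_n$ and $m<a_{nb+1}$ forbids anything larger. The initial cases $p_{0,0}=1$ (empty decomposition) and $p_{n,1}=b$ for $1\le n\le s$ (the integer is itself an element of $\mathcal{B}_n$, and the bin-separation rule prevents combining it with anything from $\mathcal{B}_1,\dots,\mathcal{B}_{n-1}$ because these bins all lie within $s$ of $\mathcal{B}_n$) follow directly from the legality rule. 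For $n\ge s+1$, we choose one of the $b$ summands from $\mathcal{B}_n$ and then legally decompose the remainder using bins $\mathcal{B}_1,\dots,\mathcal{B}_{n-s-1}$, giving $b\cdot q_{n-(s+1),k-1}$ choices. The range $k\le (n+s)/(s+1)$ reflects that a legal decomposition through bin $\mathcal{B}_n$ uses summands from bins spaced at least $s+1$ apart; beyond this range, $p_{n,k}=0$.

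\emph{Step 2: Linking $p$ and $q$.} Partitioning the integers in $[0,a_{nb+1})$ by the bin containing their largest summand (or by being zero) yields
\begin{equation}
q_{n,k}\ =\ \sum_{j=0}^{n}p_{j,k}, \qquad \text{so} \qquad G(x,y)\ :=\ \sum_{n,k\ge 0} q_{n,k}x^ny^k\ =\ \frac{F(x,y)}{1-x}.
\end{equation}

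\emph{Step 3: Solving for $F$.} Splitting $F$ according to the three cases of \eqref{FQpnk} gives
\begin{equation}
F(x,y)\ =\ 1 \ +\ by\sum_{n=1}^{s}x^n \ +\ by\,x^{s+1}\sum_{n\ge s+1,\,k\ge 1}q_{n-(s+1),k-1}x^{n-(s+1)}y^{k-1}\ =\ 1 + \frac{by(x-x^{s+1})}{1-x}+by\,x^{s+1}G(x,y).
\end{equation}
Substituting $G(x,y)=F(x,y)/(1-x)$ and solving for $F$ yields
\begin{equation}
F(x,y)\left(1-\frac{by\,x^{s+1}}{1-x}\right)\ =\ \frac{1-x+by(x-x^{s+1})}{1-x},
\end{equation}
which after simplification collapses to the claimed formula $F(x,y)=1+byx/(1-x-byx^{s+1})$.

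\emph{Expected obstacle.} The only delicate point is verifying Step 1 rigorously: one must confirm that uniqueness of the $(s,b)$-Generacci legal decomposition (Theorem~1.9 of \cite{CFHMN2}) really forces the largest summand of any $m\in[a_{(n-1)b+1},a_{nb+1})$ into bin $\mathcal{B}_n$, and that the $b$ choices within that bin are independent of the remaining decomposition (since the next permitted summand lies in $\mathcal{B}_{n-s-1}$ or earlier). Once this bin/tail decoupling is in place, Steps 2 and 3 are routine generating-function manipulations.
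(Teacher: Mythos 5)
Your proposal is correct, and its combinatorial first step --- forcing the largest summand of every $m\in[a_{(n-1)b+1},a_{nb+1})$ into bin $\mathcal{B}_n$, noting the $b$ choices there, and decoupling the remaining $k-1$ summands into bins $\mathcal{B}_1,\dots,\mathcal{B}_{n-(s+1)}$ --- is exactly the argument the paper gives for \eqref{FQpnk}. Where you genuinely diverge is in the generating-function step. The paper first proves an auxiliary result (Proposition \ref{Fxy}) establishing the recurrence $q_{n,k}=b\,q_{n-(s+1),k-1}+q_{n-1,k}$ and computing the closed form $H(x,y)=\bigl(1+by(x+x^2+\cdots+x^s)\bigr)/(1-x-byx^{s+1})$; it then deduces $p_{n,k}=q_{n,k}-q_{n-1,k}$ and verifies $F(x,y)=(1-x)H(x,y)$ through a long chain of series manipulations before substituting the closed form of $H$. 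You instead observe directly that $q_{n,k}=\sum_{j=0}^{n}p_{j,k}$ (partitioning $[0,a_{nb+1})$ by the bin containing the largest summand), which gives $H(x,y)=F(x,y)/(1-x)$ immediately, and then close a self-contained functional equation in $F$ alone. This buys you a real simplification: you never need the explicit form of $H$ or the telescoping bookkeeping of the appendix, and your equation $F\bigl(1-byx^{s+1}/(1-x)\bigr)=\bigl(1-x+by(x-x^{s+1})\bigr)/(1-x)$ does reduce to the claimed $1+byx/(1-x-byx^{s+1})$. One point worth making explicit, since it is only implicit in your Step 3: the identity $p_{n,k}=b\,q_{n-(s+1),k-1}$ for $n\ge s+1$ extends to all $k\ge 1$ because both sides vanish when $k>\frac{n+s}{s+1}$ (the right side since $k-1>\frac{(n-s-1)+s}{s+1}$ forces $q_{n-(s+1),k-1}=0$), which is what licenses summing over all $k\ge 1$ in your functional equation without tracking the range restriction.
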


We omit the proof here as the details follow from standard bookkeeping and algebraic manipulation.  The proof of this proposition 
is found in Appendix A.


To complete the proof of Theorem \ref{thm:gaussian} we make use the following result from \cite{DDKMV}.


\begin{theorem} \label{thm:DDKMV}\cite[Theorem 1.8]{DDKMV} Let $\kappa$ be a fixed positive integer. For each $n$, let a discrete random variable $Y_n$ in $I_{n}=\{1,2,\ldots,n\}$  have
\begin{align}
{\rm Prob}(Y_n=j)\ = \ \begin{cases}p_{j,n}/\sum_{j=1}^np_{j,n}&\text{{\rm if} $j\in I_n$}\\ 0&\text{{\rm otherwise}}\end{cases}\label{prob}
\end{align}
for some positive real numbers $p_{1,n}, p_{2,n}, \ldots, p_{n,n}$. Let $g_n(y):=\sum_j p_{j,n}y^j$.

If $g_n$ has the form $g_n(y) = \sum_{i=1}^\kappa q_i(y)\alpha_i^n(y)$ where
\begin{enumerate}
\item[(i)]  for each $i \in \{1, \ldots, \kappa\}, q_i, \alpha_i: \mathbb{R} \to \mathbb{R}$ are three times differentiable functions which do not depend on $n$;
\item[(ii)]  there exists some small positive $\epsilon$ and some positive constant $\lambda < 1$ such that for all $y \in I_{\epsilon} = [1-\epsilon, 1 + \epsilon], |\alpha_1(y)| > 1$ and $|\frac{\alpha_i(y)}{\alpha_1(y)}| < \lambda < 1$ for all $i=2, \ldots, \kappa$;
\end{enumerate}
then
\begin{enumerate}
\item the mean $\mu_n$ and variance $\sigma_n^2$ of $Y_n$ both grow linearly with $n$. Specifically,
\begin{align}
\label{DDKMV-form}
\mu_n \ =\ Cn + d + o(1), \ \ \ \ \sigma_n^2 \ =\ C^\prime n + d^\prime + o(1)
\end{align}
where

\begin{align}
C&\ = \  \frac{\alpha_1'(1)}{\alpha_1(1)}, \  d \ = \  \frac{q_1'(1)}{q_1(1)} \nonumber\\
C^\prime &\ = \  \frac{d}{dy}\left. \left(\frac{y\alpha_1'(y)}{\alpha_1(y)} \right)\right\vert_{y=1} \ = \  \frac{\alpha_1(1)[\alpha_1'(1)+ \alpha_1''(1)]-\alpha_1'(1)^2}{\alpha_1(1)^2}\nonumber\\
d^\prime &\ = \  \frac{d}{dy}  \left. \left(\frac{yq_1'(y)}{q_1(y)} \right) \right\vert_{y=1} \ = \  \frac{q_1(1)[q_1'(1)+ q_1''(1)]-q_1'(1)^2}{q_1(1)^2}.
\end{align}
\end{enumerate}
Moreover, if
\begin{enumerate}
\item[(iii)]    $\alpha_1'(1) \neq 0$ and $\frac{d}{dy}\left[ \frac{y\alpha_1'(y)}{\alpha_1(y)}\right]|_{y=1} \neq 0$, i.e., $C,C'>0$,
\end{enumerate}
then
\begin{enumerate}
\item[(2)] as $n \to \infty$, $Y_n$ converges in distribution to a normal distribution.
\end{enumerate}
\end{theorem}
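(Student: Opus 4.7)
The plan is to run the method of moment (or cumulant) generating functions on the structured generating polynomial $g_n(y)$. Observe that if we set $M_n(t) := g_n(e^t)/g_n(1)$, then $M_n$ is precisely the moment generating function of $Y_n$, so every moment (and cumulant) of $Y_n$ can be recovered from derivatives of $\log M_n(t)$ at $t=0$. The first and third hypotheses give us a clean multiplicative structure for $g_n$, and hypothesis (ii) tells us that on the strip $I_\epsilon$ the dominant term $q_1(y)\alpha_1(y)^n$ beats every other $q_i(y)\alpha_i(y)^n$ by a factor of at least $\lambda^n$. Consequently, for $y\in I_\epsilon$,
\begin{equation}
\log g_n(y) \;=\; n\log\alpha_1(y) + \log q_1(y) + O(\lambda^n),
\end{equation}
uniformly in $y$, where the implicit constant depends only on the fixed functions $q_i,\alpha_i$.

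Next I would extract the mean and variance by Taylor expanding $\log\alpha_1(e^t)$ and $\log q_1(e^t)$ around $t=0$, which is justified because $\alpha_1,q_1$ are $C^3$ and $\alpha_1(1)\neq 0$ (else $g_n(1)$ would vanish for large $n$). Writing $\log M_n(t) = \kappa_1 t + \kappa_2 t^2/2 + O(t^3)$ for the cumulant expansion, matching coefficients gives $\mu_n = \kappa_1 = n\,\alpha_1'(1)/\alpha_1(1) + q_1'(1)/q_1(1) + o(1)$ and $\sigma_n^2 = \kappa_2$ which evaluates to the chain-rule expression stated in the theorem. This immediately yields the form $\mu_n = Cn+d+o(1)$ and $\sigma_n^2 = C'n+d'+o(1)$ with the explicit constants.

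For the Gaussian conclusion under hypothesis (iii), I would apply Curtiss's theorem (convergence of the moment generating function on a neighborhood of $0$ implies convergence in distribution). Consider the standardized variable $Y_n' = (Y_n-\mu_n)/\sigma_n$, whose moment generating function is $M_{Y_n'}(t) = \exp(-t\mu_n/\sigma_n) M_n(t/\sigma_n)$. Since $\sigma_n\to\infty$ under (iii), the argument $t/\sigma_n$ lies in $I_\epsilon$ for $n$ large, and substituting the cumulant expansion yields
\begin{equation}
\log M_{Y_n'}(t) \;=\; \frac{t^2}{2}\cdot\frac{\sigma_n^2 + o(1)}{\sigma_n^2} + O\!\left(\frac{n}{\sigma_n^3}\right) + O\!\left(\frac{1}{\sigma_n}\right) + O(\lambda^n),
\end{equation}
which tends to $t^2/2$ on a neighborhood of $t=0$. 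That is the MGF of the standard normal, so Curtiss applies and $Y_n' \Rightarrow \mathcal{N}(0,1)$.

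The main obstacle, and the reason the three differentiability and the uniform domination hypotheses are needed, is controlling error terms uniformly across the shrinking window $t/\sigma_n$: one needs that the $O(t^3)$ remainder in the Taylor expansion and the $O(\lambda^n)$ contribution from the subdominant roots together vanish faster than $1/\sigma_n^2$ when multiplied by $n$. This is why (ii) asks for strict domination on a full interval $I_\epsilon$ rather than merely at $y=1$, and why (iii) is used not just to guarantee $\sigma_n\to\infty$ but to justify that the quadratic term in $t$ is the genuine leading nonzero contribution after standardization. Once this uniform control is in hand, the rest is bookkeeping via Taylor's theorem and the continuity of $\log$.
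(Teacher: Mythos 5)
This theorem is quoted from \cite[Theorem 1.8]{DDKMV} and is not proved in the present paper, but your argument --- factoring out the dominant term $q_1(y)\alpha_1(y)^n$ on $I_\epsilon$, Taylor-expanding $\log\alpha_1(e^t)$ and $\log q_1(e^t)$ to read off the cumulants, cancelling the linear term after standardization, and invoking Curtiss's theorem on the MGF of $(Y_n-\mu_n)/\sigma_n$ --- is precisely the proof given in that reference, and it is correct. The one step worth making explicit is that for the mean and variance you must control the \emph{derivatives} of the $O(\lambda^n)$ remainder at $y=1$, not merely its size; since that remainder is an explicit finite sum of $C^3$ functions times $(\alpha_i(y)/\alpha_1(y))^n$ (with $q_1$ bounded away from zero on $I_\epsilon$ after shrinking $\epsilon$), its first three derivatives are $O(n^3\lambda^n)=o(1)$, so the bookkeeping you describe goes through.
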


To apply Theorem \ref{thm:DDKMV} we still need some auxiliary results about the function $g_n(y)$ which gives the coefficient of $x^n$ in the expansion of the generating function $F(x,y)$. In fact we need results regarding the partial fraction decomposition of $1/(1-x-byx^{s+1})$.

\begin{lemma}\label{NoRepeats2}Let $s,b\geq 1$ and $y>0$. Let  $f(x)=1-x-byx^{s+1}$. Then
\begin{enumerate}
\item $f(x)$ has no repeated roots,
\item $f(x)$ has a positive root  $\lambda_1(y)$ whose modulus is smaller than the modulus of any other root of $f(x)$. Moreover, $\lambda_1(y)<1$.
\end{enumerate}
\end{lemma}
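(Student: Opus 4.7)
My plan for part (1) is to apply the standard criterion that a multiple root must satisfy both $f(\alpha) = 0$ and $f'(\alpha) = 0$, then derive a contradiction. The derivative condition $f'(x) = -1 - by(s+1)x^s = 0$ forces $\alpha^s = -1/(by(s+1))$, which is strictly negative. Multiplying by $\alpha$ gives $by\alpha^{s+1} = -\alpha/(s+1)$, and substituting into $f(\alpha) = 0$ pins down $\alpha = (s+1)/s$, a positive real number. Since $((s+1)/s)^s > 0$ contradicts the requirement $\alpha^s < 0$, no repeated root can exist. This derivation is purely algebraic and remains valid for complex candidates $\alpha$, because the two equations already force $\alpha$ to be real and positive.

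For part (2), existence and uniqueness of a positive real root $\lambda_1(y) \in (0,1)$ is immediate from one-variable calculus. Indeed $f(0) = 1 > 0$, $f(1) = -by < 0$, and $f'(x) = -1 - by(s+1)x^s < 0$ on $(0,\infty)$, so $f$ is strictly decreasing on the positive axis and has exactly one positive zero, which by the intermediate value theorem lies in $(0,1)$.

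The remaining claim — that $\lambda_1(y)$ is strictly dominated in modulus by every other root — is the heart of the lemma. My plan is to exploit the strict monotonicity of $g(r) := r + byr^{s+1}$ on $[0,\infty)$ together with the triangle inequality. If $\mu$ is any root, then $\mu + by\mu^{s+1} = 1$, so
\[ g(|\mu|) \ = \ |\mu| + by|\mu|^{s+1} \ \geq \ |\mu + by\mu^{s+1}| \ = \ 1 \ = \ g(\lambda_1),\]
which yields $|\mu| \geq \lambda_1$ since $g$ is strictly increasing. To upgrade this to strict inequality whenever $\mu \neq \lambda_1$, I write $\mu = \lambda_1 e^{i\theta}$ in the equality case $|\mu| = \lambda_1$ and take real parts of $\mu + by\mu^{s+1} = 1$ to obtain $1 = \lambda_1\cos\theta + by\lambda_1^{s+1}\cos((s+1)\theta)$. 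Since $\lambda_1 + by\lambda_1^{s+1} = 1$ expresses $1$ as a positive convex combination and $|\cos|\leq 1$, equality forces $\cos\theta = \cos((s+1)\theta) = 1$, hence $\theta \equiv 0\pmod{2\pi}$ and $\mu = \lambda_1$.

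The main obstacle — or rather the only step beyond routine single-variable calculus — is this final equality-case analysis. The triangle inequality cleanly delivers $|\mu|\geq \lambda_1$, but ruling out the borderline possibility $|\mu| = \lambda_1$ with $\mu \neq \lambda_1$ requires noticing that the identity $\lambda_1 + by\lambda_1^{s+1} = 1$ is a genuine convex combination with strictly positive weights, so a weighted average of two cosines can equal $1$ only when each cosine is $1$. Everything else reduces to the standard repeated-root criterion plus the monotonicity of $f$ and of $g$ on $(0,\infty)$.
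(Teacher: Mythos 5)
Your proof is correct, and for part (2) it takes a genuinely different route from the paper. For part (1) both arguments use the standard repeated-root criterion $f(\alpha)=f'(\alpha)=0$; the paper merely asserts that this "yields a contradiction" (after rescaling to $h(x)=x^{s+1}+ax-a$ with $a=1/by$), whereas you carry out the algebra, pinning the putative double root to $\alpha=(s+1)/s$ and contradicting $\alpha^s=-1/(by(s+1))<0$ --- a welcome filling-in of a gap. For part (2) the paper substitutes $w=1/x$, views the roots of $w^{s+1}-w^s-by$ as eigenvalues of its companion matrix, and invokes Perron--Frobenius for a nonnegative irreducible matrix to get a unique positive dominant root; you instead use the triangle inequality on $g(r)=r+byr^{s+1}$ to get $|\mu|\geq\lambda_1$ for every root $\mu$, and then settle the equality case by observing that $1=\lambda_1\cos\theta+by\lambda_1^{s+1}\cos((s+1)\theta)$ together with $\lambda_1+by\lambda_1^{s+1}=1$ forces both cosines to equal $1$. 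Your argument is more elementary and self-contained, and it handles the strictness of the dominance explicitly: the Perron--Frobenius statement for merely \emph{irreducible} matrices guarantees only that the spectral radius is attained by a simple positive eigenvalue, not that all other eigenvalues have strictly smaller modulus --- for that one needs primitivity (which does hold here, since the companion matrix's digraph has cycles of coprime lengths $1$ and $s+1$, but the paper does not check it). What the paper's approach buys is brevity and a connection to standard machinery that generalizes to recurrences where the coefficient pattern is less explicit; your approach buys a complete, calculus-level proof with no appeal to matrix theory.
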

\begin{proof} (1) Let $h(x)=x^{s+1}+ax-a$, where $a =1/by$, and suppose that $h(x)$ has a repeated root, say $r$ (note $r \neq 0$). Then
$h(r)$ and $h'(r)$ equal 0 yields a contradiction. (2) To find the roots of $f(x)=1-x-byx^{s+1}$ we use the change of variable $w=1/x$ and note that the roots of
\begin{align}g(w)\ = \ w^{s+1}-w^s-by\end{align}
are the eigenvalues of the companion matrix of the polynomial $g(w)$. This matrix is a non-negative irreducible matrix so by the Perron-Frobenius Theorem, $g(w)$ has a unique positive dominant root $\mu(y)$. Hence $\lambda_1(y):=\frac{1}{\mu(y)}$ is the unique positive root  of $f(x)$ with smallest modulus. Now by applying the Intermediate Value Theorem we note that one of the positive roots lies in the interval $[0,1]$. 
 Since $\lambda_1(y)$ is the smallest positive root, then clearly $0<\lambda_1(y)<1$.
\end{proof}


\begin{proposition}\label{gny}
Let $g_n(y)=\sum_{k=0}^{\infty}p_{n,k}y^k$, which is the coefficient of $x^n$ in the generating function of the $p_{n,k}$'s. Then for sufficiently large $n$
\begin{align}
\label{eq.gny}
g_n(y)\ = \ \displaystyle\sum_{i=1}^{s+1}q_i(y)\alpha_i^n(y),\end{align}
where  for $1\leq i\leq s+1$, $\alpha_i(y)=\frac{1}{\lambda_i(y)}$ with $\lambda_i(y)$ the distinct roots of the polynomial $f(x)=1-x-byx^{s+1}$ and $q_i(y)$ are algebraic functions of $y$ which depend on these roots.
\end{proposition}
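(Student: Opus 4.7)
The plan is to extract the coefficient of $x^n$ from the generating function $F(x,y)$ via a partial fraction decomposition in the variable $x$, then expand each simple factor as a geometric series. Since the constant term $1$ in $F(x,y)$ contributes only to $x^0$, for $n\geq 1$ we have
\begin{equation*}
g_n(y) \ = \ [x^n]\,\frac{byx}{1-x-byx^{s+1}} \ = \ [x^n]\,\frac{byx}{f(x)}.
\end{equation*}

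First I would invoke Lemma \ref{NoRepeats2}, which guarantees that the denominator $f(x) = 1 - x - byx^{s+1}$ has exactly $s+1$ distinct roots $\lambda_1(y),\ldots,\lambda_{s+1}(y)$ (as a polynomial of degree $s+1$). The distinctness is the crucial input: it allows a partial fraction decomposition in which each factor appears with multiplicity one, so we may write
\begin{equation*}
\frac{byx}{f(x)} \ = \ \sum_{i=1}^{s+1} \frac{r_i(y)}{1 - x/\lambda_i(y)}
\end{equation*}
for some coefficients $r_i(y)$ which are obtained by the usual residue formula, namely $r_i(y) = -by\lambda_i(y)/(\lambda_i(y) f'(\lambda_i(y)))$ up to a sign convention; in particular each $r_i(y)$ is a rational (hence algebraic) function of $y$ through its dependence on the roots $\lambda_i(y)$.

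Next, I would expand each summand as a geometric series, valid formally for each $i$, to obtain
\begin{equation*}
\frac{r_i(y)}{1 - x/\lambda_i(y)} \ = \ \sum_{n=0}^{\infty} r_i(y)\, \lambda_i(y)^{-n}\, x^n.
\end{equation*}
Reading off the coefficient of $x^n$ and setting $\alpha_i(y) = 1/\lambda_i(y)$ and $q_i(y) = r_i(y)$, we conclude that $g_n(y) = \sum_{i=1}^{s+1} q_i(y)\alpha_i(y)^n$ for all $n\geq 1$, which is the claimed identity.

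There is no serious obstacle here: the argument is a routine partial fraction / generating function computation, and the only nontrivial ingredient—the absence of repeated roots of $f(x)$—has already been established in Lemma \ref{NoRepeats2}. The one point worth flagging is that the $q_i(y)$ defined this way are genuinely algebraic functions of $y$ (not polynomials), because the $\lambda_i(y)$ themselves are algebraic in $y$; this is exactly the level of regularity demanded by the hypotheses of Theorem \ref{thm:DDKMV}, which is why Proposition \ref{gny} is the form in which we need the result.
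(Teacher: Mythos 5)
Your argument is correct and is essentially the paper's own proof: both rest on Lemma \ref{NoRepeats2} to get $s+1$ distinct roots of $f(x)$, perform a partial fraction decomposition, and expand each simple factor as a geometric series in $x/\lambda_i(y)$. The only (harmless) difference is that you split off the constant $1$ and decompose $byx/f(x)$ directly, whereas the paper decomposes $1/f(x)$ and then multiplies back by the numerator polynomial; your variant even yields the identity for all $n\geq 1$ rather than merely for sufficiently large $n$.
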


\begin{proof}
Let $\lambda_1(y),\lambda_2(y),\ldots,\lambda_{s+1}(y)$ be the distinct roots of $f(x)=1-x-byx^{s+1}$. Using a partial fraction decomposition of $1/f(x)$, 
\begin{align}
\frac{1}{f(x)}& \ = \ \sum_{i=1}^{s+1}\frac{p_i(y)}{x-\lambda_i(y)},
\end{align}
where $p_i(y)$ are algebraic functions of $y$ depending on $\lambda_i(y)$.
By rewriting the terms and using the geometric sum formula we have that
\begin{align}
\frac{1}{f(x)}&\ =\ \sum_{i=1}^{s+1}  \hat{p}_i(y)\frac{1}{1-\frac{x}{\lambda_i(y)}}\ =      \ \sum_{i=1}^{s+1}\sum_{n=0}^{\infty} \hat{p}_{i}(y)\left(\alpha_i(y)x\right)^n \ = \ \displaystyle \sum_{n=0}^{\infty}\left[\sum_{i=1}^{s+1} \hat{p}_i(y)\alpha_i^n(y)\right] x^n,
\end{align}
where $\hat{p}_i(y)=-\frac{p_i(y)}{\lambda_{i}(y)}$ and $\alpha_i(y)=\frac{1}{\lambda_i(y)}$.
So
\begin{align}
F(x,y) \ = \ \frac{1+x(by-1)-byx^{s+1}}{f(x)} \ =  \ \left({1+x(by-1)-byx^{s+1}}\right)\sum_{n=0}^{\infty}\left[\sum_{i=1}^{s+1} \hat{p}_i(y)\alpha_i^n(y)\right] x^n.
\end{align}
Thus for sufficiently large $n$,
\begin{align}
g_n(y)&\  = \  \sum_{i=1}^{s+1} \alpha_i^n(y)\left[ \hat{p}_i +   (by-1) \hat{p}_i \alpha_i^{-1}(y)   -by \hat{p}_i \alpha_i^{-s-1}(y)  \right] \ =\ \sum_{i=1}^{s+1}q_i(y)\alpha_i^n(y).
\end{align}
\end{proof}

\begin{proof}[Proof of Theorem \ref{thm:gaussian}]
To prove Gaussianity we need only show that $g_n(y)$ satisfies conditions (i)--(iii)  in Theorem \ref{thm:DDKMV}.

\begin{itemize}
\item Condition (i): For each $i \in \{1, \ldots, s+1\}$,  $q_i(y)$ and $\alpha_i(y)$ are three times differentiable functions as roots of polynomials are differentiable functions of the polynomial coefficients, see \cite{Lozada}.
\item Condition (ii): Follows from Lemma \ref{NoRepeats2}.
\item Condition (iii): Follows from Theorems \ref{thm:gen_mean} and \ref{thm:gen_var}.
\end{itemize}

Therefore, by satisfying the conditions of Theorem \ref{thm:DDKMV}, we have completed our proof.
\end{proof}

\section{Gap Measures for the $(s,b)$-Generacci Sequences}


\subsection{Average Bin Gap Measure}\label{sec:gaps}
\begin{proof}[Proof of Theorem \ref{thm:gapstheorem}]
Let $m \in I_n:= [a_{(n-1)b+1},a_{nb+1})$ have legal decomposition
\begin{align}
m\ = \ a_{\ell_1} + a_{\ell_2}  + \cdots + a_{\ell_k} \ {\rm with} \ \ell_1> \ell_2 >  \cdots >  \ell_k \  {\rm and} \ a_{\ell_i}\in \mathcal{B}_{\left\lceil\frac{\ell_i}{b}\right\rceil}\ {\rm for\ all\ } 1\leq i\leq k. \end{align}

Recall that $P_n(g)$ is the fraction of bin gaps that are of length $g$ (i.e., the probability of a bin gap of length $g$ among $(s,b)$-Generacci legal decompositions of $m\in [a_{(n-1)b+1},a_{nb+1})$). Clearly $P_n(g)=0$ whenever $g<s+1$ since we must skip $s$ bins between summands. For $g\geq s+1$, define $X_{i,g}$ as the number of $m\in I_n$ whose decompositions contribute a bin gap of length $g$ starting at bin $\mathcal{B}_i$. Then

\begin{align}P_n(g)&\ = \ \displaystyle \frac{ \sum_{i=1}^{n}X_{i,g}}{(\mu_n-1)([a_{nb+1}]-[a_{(n-1)b+1}])}.\end{align}

To compute $X_{i,g}$ note we have a summand from bin $\mathcal{B}_i$ and one from $\mathcal{B}_{i+g}$, and no summands from $\mathcal{B}_{i+1},\mathcal{B}_{i+2},\ldots,\mathcal{B}_{i+g-1}$. Moreover since $m\in I_n=[a_{(n-1)b+1},a_{nb+1})$, $m$ must contain a summand from $\mathcal{B}_n$. Hence there is freedom to choose summands from $\mathcal{B}_1,\mathcal{B}_2,\ldots,\mathcal{B}_{i-s-1}$ and then again we are free to choose summands from bins
 $\mathcal{B}_{i+g+s+1}$, $\mathcal{B}_{i+g+s+2}, \ldots,\mathcal{B}_{n-s-1}.$

The number of ways to choose legally from $\mathcal{B}_1,\mathcal{B}_2,\ldots,\mathcal{B}_{i-s-1}$ is $a_{(i-s-1)b+1}-1$. Similarly,  the number of ways to choose legally from $\mathcal{B}_{i+g+s+1},\mathcal{B}_{i+g+s+2},\ldots,\mathcal{B}_{n-s-1}$ is the number of integers in $[0,  a_{(n-2s-g-i-1)b+1}-1)$. As we selected summands from $\mathcal{B}_i, \mathcal{B}_{i+g}$ and $\mathcal{B}_n$,
\begin{align}X_{i,g}\ = \ b^3[a_{(i-s-1)b+1}-1][a_{(n-2s-g-i-1)b+1}-1].\label{binapprox}\end{align}

By Equation \eqref{explicitConstant_2} of Theorem \ref{thrm:recurrencesb_2},
\begin{align}
a_n
&\ = \ c_1\lambda_1^n(1+O(\varepsilon^n)),\label{an_explicit_vareps}
\end{align}
where $\varepsilon = |\lambda_2/\lambda_1|$,  for some constants $c_1$, $\lambda_1$, and $\lambda_2$, where $\lambda_1>1$, $c_1>0$ and $|\lambda_2|<\lambda_1$.
Thus
\begin{align}
X_{i,g}& =  b^3c_{1}^2\lambda_{1}^{(n-3s-2)b+2}(\lambda_{1}^b)^{-g}(1+O(\varepsilon^{(i-s-1)b+1}))(1+O(\varepsilon^{(n-i-2s-g-1)b+1})).
\end{align}

We break the sum into three ranges: $i \le 8\log n$, $8
\log n < i < n-8\log n$, and $n-8\log n \le i \le n$. Note that for $8 \log n < i < n - 8\log n$,
\begin{align}
\varepsilon^{(i-s-1)b+1},
\varepsilon^{(n-i-2s-g-1)b+1}  \leq   \varepsilon^{4\log n},
\end{align}
which implies that all lower order terms are negligibly small relative to the main term. On the other hand
\begin{align}
\displaystyle \sum_{1 \leq i < 8\log n} X_{i,g}&=b^3c_{1}^2\lambda_{1}^{(n-3s-2)b+2}(\lambda_{1}^b)^{-g}O(\log n)\nonumber\\
\displaystyle\sum_{n- 8\log n \leq i \leq n} X_{i,g} &=b^3c_{1}^2\lambda_{1}^{(n-3s-2)b+2}(\lambda_{1}^b)^{-g}O(\log n).
\end{align} Hence
\begin{align}
P_n(g)&\ = \ \frac{\displaystyle  \sum_{1 \leq i < 8\log n} X_{i,g} \  \  + \sum_{8\log n  \le i < n-8\log n} X_{i,g} \  \ + \sum_{n- 8\log n \leq i \leq n} X_{i,g} }{(\mu_n-1)([a_{nb+1}]-[a_{(n-1)b+1}])}\nonumber\\
&\ = \ \frac{ b^3c_{1}^2\lambda_{1}^{(n-3s-2)b+2}(\lambda_{1}^b)^{-g}\left[O(\log n) + (n-16\log n)\left(1 + O(\varepsilon^{4\log n})\right)\right]}{Cn(c_{1}\lambda_{1}^{nb+1}-c_{1}\lambda_{1}^{(n-1)b+1})}\nonumber\\
&\ = \ \frac{ b^3c_{1}^2\lambda_{1}^{(n-3s-2)b+2}}{Cnc_{1}\lambda_{1}^{(n-1)b+1}(\lambda_{1}^{b}-1)} (\lambda_{1}^b)^{-g} \left[n + O(\log n)\right].\nonumber\\
\end{align}
Taking the limit as $n\to\infty$ yields
\begin{align}P(g) \ = \ \frac{b^3c_{1}}{C(\lambda_{1}^b-1)\lambda_{1}^{(3s+1)b-1}}(\lambda_{1}^b)^{-g}.
\label{valuePg_sb}
\end{align}
As $P(g)$ defines a  probability distribution and  $P(g) = 0$ for  $g < s+1$, $\sum_{g = s+1}^\infty P(g) = 1$.
Evaluating the geometric series and using $\lambda_1$ is a root of $x^{(s+1)b} - x^{sb} - b  = 0$ yields
\begin{align}
\frac{b^3c_{1}}{C(\lambda_{1}^b-1)\lambda_{1}^{(3s+1)b-1}}\  = \ b.
\end{align}
Thus $P(g) = b(\lambda_1^b)^{-g}$.
\end{proof}


\subsection{Spacing Bin Gap Measure}\label{sec:individualgaps}
We prove Theorem~\ref{indBinGap} by checking that the conditions of \cite[Theorem 1.1]{DFFHMPP1} are satisfied by the spacing bin gap measure of the $(s,b)$-Generacci sequence; note we are working with gaps between bins and not summands, but by collapsing a bin we find the arguments are identical. We restate \cite[Theorem 1.1]{DFFHMPP1} below for ease of reference.

\begin{theorem}\label{genthm2}\cite[Theorem 1.1]{DFFHMPP1} For $z\in I_n:=[a_{c_1n+d_1},a_{c_2n+d_2})$, the individual gap measures $\nu_{z,n}(x)$ converge almost surely in distribution  to the average gap measure $\nu(x)$ if the following hold.

\begin{enumerate}
\item The number of summands for decompositions of $z \in I_n$ converges to a Gaussian with mean $\mu_n = \cm n+O(1)$ and variance $\sigma_n^2 = \cv n+O(1)$, for constants $\cm, \cv>0$, and $k(z) \ll n$ for all $z \in I_n$.\label{condition1}

\item We have the following, with $\lim_{n\to\infty} \sum_{g_1, g_2} {\rm error}(n,g_1,g_2) = 0$:
\begin{align}
\frac{2}{|I_n|\mu_n^2}\sum_{j_1<j_2}X_{j_1,j_1+g_1,j_2,j_2+g_2}(n) \ = \  P(g_1)P(g_2)+\text{{\rm error}}(n,g_1,g_2).
\end{align}\label{condition2}


\item The limits in Equation \eqref{gaplim} exist.\label{condition3}
\end{enumerate}
\end{theorem}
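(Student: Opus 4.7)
The plan is to reduce the almost-sure weak convergence $\nu_{z,n} \Rightarrow \nu$ to pointwise statements about single gap lengths, and then power through with a variance/Borel--Cantelli argument whose key input is hypothesis (2). Since both $\nu_{z,n}$ and $\nu$ are probability measures supported on $\mathbb{N}$, weak convergence is equivalent to coordinatewise convergence $\nu_{z,n}(\{g\}) \to P(g)$ for every integer $g \geq 0$. A countable intersection of probability-one events still has probability one, so it suffices to fix $g$ and prove the single-gap statement almost surely, then intersect over $g$.

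For fixed $g$, let $N_g(z)$ denote the number of consecutive index pairs $(\ell_j, \ell_{j+1})$ in the decomposition of $z$ with $\ell_{j+1} - \ell_j = g$, so that $\nu_{z,n}(\{g\}) = N_g(z)/(k(z)-1)$. By hypothesis (1) the denominator satisfies $k(z) - 1 = \cm n + O(\sqrt{n\log n})$ on an event of $z$-probability $1 - o(1)$, so it is enough to control $N_g(z)/(\cm n)$. For the first moment, writing $N_g(z) = \sum_j \mathbf{1}[\{j, j+g\} \text{ are consecutive summands of } z]$ and averaging over $z \in I_n$ gives $\mathbb{E}_{z \in I_n}[N_g(z)]/\mu_n = P_n(g) \to P(g)$ by hypothesis (3). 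For the second moment, expand $N_g(z)^2$ as a diagonal contribution of size $O(n) = o(\mu_n^2)$ plus twice the off-diagonal pair sum $\sum_{j_1<j_2} X_{j_1,j_1+g,j_2,j_2+g}(n)/|I_n|$, which by hypothesis (2) with $g_1 = g_2 = g$ equals $\mu_n^2(P(g)^2 + \text{error}(n,g,g))$. Subtracting the square of the mean yields $\text{Var}_{z \in I_n}(N_g(z)/\mu_n) = o(1)$, and Chebyshev delivers convergence in probability of $\nu_{z,n}(\{g\})$ to $P(g)$.

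To upgrade convergence in probability to almost-sure convergence, I would pass to a polynomial subsequence $n_k = \lceil k^{\alpha} \rceil$ with $\alpha$ large enough that the Chebyshev tail bounds are summable, apply Borel--Cantelli to obtain almost-sure convergence along $\{n_k\}$, and interpolate between $n_k$ and $n_{k+1}$ by observing that consecutive intervals $I_n$ and $I_{n+1}$ differ only by a bounded multiplicative factor in size and in the indices involved. Intersecting over the countable family $g \in \mathbb{N}$ then yields the almost-sure weak convergence claim.

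The main obstacle I expect is coupling the random numerator $N_g(z)$ with the random denominator $k(z)-1$ in the ratio that defines $\nu_{z,n}(\{g\})$, since both fluctuate as $z$ varies over $I_n$. I plan to decouple them by first using the Gaussian statement in hypothesis (1) (together with Borel--Cantelli along $\{n_k\}$) to establish that $k(z) = \cm n(1+o(1))$ almost surely; on this good event the denominator contributes only a harmless $1+o(1)$ multiplicative factor, reducing the whole problem to the numerator analysis $N_g(z)/(\cm n) \to P(g)$ carried out above. A secondary technical point is justifying that the $o(1)$ error in hypothesis (2), summed over the countably many pairs $(g_1,g_2)$ that can arise, still goes to zero; the summability built into the hypothesis takes care of this.
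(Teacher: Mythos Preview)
This theorem is not proved in the present paper at all: it is simply restated from \cite[Theorem~1.1]{DFFHMPP1} for reference, and then invoked as a black box in the proof of Theorem~\ref{indBinGap}. There is therefore no proof in this paper to compare your proposal against.

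That said, your sketch is essentially the standard second-moment argument that underlies results of this type, and the reduction to pointwise convergence on $\mathbb{N}$ together with the Chebyshev step using hypothesis~(2) is exactly the right engine. One point to be careful about is the Borel--Cantelli/subsequence upgrade: as stated, for each $n$ the element $z$ is drawn from a \emph{different} finite interval $I_n$, so there is no single probability space on which a sequence $(z_n)_n$ lives and to which Borel--Cantelli applies in the usual way. In this literature ``almost surely in distribution'' is typically shorthand for the statement that the proportion of $z\in I_n$ whose individual gap measure is $\varepsilon$-far from $\nu$ tends to zero (i.e., convergence in probability of the random measure), and for that conclusion Chebyshev plus hypotheses~(1)--(3) already suffice without any subsequence argument. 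If you genuinely want almost-sure convergence on a joint space, you would first need to specify that space and the coupling between the $I_n$'s, which the theorem statement does not do.
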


In \cite{DFFHMPP1}, the authors used the following definition: for $g_1,g_2 \ge 0$
\begin{align}
X_{j_1,j_1+g_1, j_2,j_2+g_2}(n)  & \ := \  \#\left \{
z \in I_n:\begin{subarray}\ b_{j_1},\ b_{j_1+g_1},\ b_{j_2},\ b_{j_2+g_2}\ \text{in}\ z\text{'s\ decomposition,}\\
\text{but\ not\ } b_{j_1+q},\ b_{j_2+p}\ \text{for}\ 0<q<g_1,\ 0<p<g_2
\end{subarray}
\right \}.
\end{align}
Since we are concerned with the gaps between bins we will compute $X_{j_1,j_1+g_1, j_2,j_2+g_2}(n)$ by counting $z \in I_n$ whose decomposition has  a summand from bins $\mathcal{B}_{j_1}$ and $\B_{j_1+g_1}$ (with no bins used in between) and again from bins $\mathcal{B}_{j_2}$ and $\B_{j_2+g_2}$ (with no bins used in between).

\begin{proposition}\label{prop:error}
We have
\begin{align}
\frac{2}{|I_n|\mu_n^2}\displaystyle\sum_{j_1<j_2}X_{j_1,j_1+g_1, j_2,j_2+g_2}(n)\ =\ P(g_1)P(g_2)+\text{error}(g_1,g_2,n)
\end{align}
where the error as $n\to\infty$ summed over all pairs $(g_1, g_2)$ goes to zero.
\end{proposition}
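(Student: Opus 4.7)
My plan is to mimic the counting argument from the proof of Theorem \ref{thm:gapstheorem}, tracking two marked gaps instead of one. First, I will derive a closed form for $X_{j_1,j_1+g_1,j_2,j_2+g_2}(n)$. For $z \in I_n$ to contribute, its decomposition must place summands in the four specified bins, which forces $g_1, g_2 \ge s+1$ and $j_2 - (j_1+g_1) \ge s+1$; the remaining freedom splits into five independent pieces: free legal segments on $\mathcal{B}_1,\ldots,\mathcal{B}_{j_1-s-1}$, on $\mathcal{B}_{j_1+g_1+s+1},\ldots,\mathcal{B}_{j_2-s-1}$, and on $\mathcal{B}_{j_2+g_2+s+1},\ldots,\mathcal{B}_{n-s-1}$, together with one forced summand in each of $\mathcal{B}_{j_1}, \mathcal{B}_{j_1+g_1}, \mathcal{B}_{j_2}, \mathcal{B}_{j_2+g_2}$, and one summand in $\mathcal{B}_n$ when $j_2+g_2<n$. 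Since the number of legal decompositions on $L$ consecutive bins is $a_{Lb+1}$, in the generic case I obtain
\begin{equation*}
X_{j_1,j_1+g_1,j_2,j_2+g_2}(n) \ = \ b^5\,a_{(j_1-s-1)b+1}\,a_{(j_2-j_1-g_1-2s-1)b+1}\,a_{(n-j_2-g_2-2s-1)b+1},
\end{equation*}
with an analogous expression (with one fewer $a$- and $b$-factor) in the boundary case $j_2+g_2=n$.

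Next, I will substitute the Binet-type formula \eqref{an_explicit_vareps} into each factor. The three powers of $\lambda_1$ telescope so that, up to a relative error $O(\varepsilon^{M})$ with $M$ the minimum of the three exponents, $X$ is \emph{independent of $j_1, j_2$}: to leading order
\begin{equation*}
X_{j_1,j_1+g_1,j_2,j_2+g_2}(n) \ \sim \ b^5 c_1^3 \lambda_1^{(n-5s-3)b+3}\,\lambda_1^{-b(g_1+g_2)}.
\end{equation*}
Counting valid pairs subject to $j_1\ge 1$, $j_2 \ge j_1+g_1+s+1$, and $j_2+g_2+s+1 \le n$ gives approximately $(n-g_1-g_2)^2/2$ pairs in the bulk. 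Dividing the resulting sum by $|I_n|\mu_n^2/2 \sim c_1(\lambda_1^b-1)\lambda_1^{(n-1)b+1}A^2 n^2/2$, where $A$ is the slope from Theorem \ref{thm:gaussian}, and invoking the defining identity $\lambda_1^{(s+1)b}-\lambda_1^{sb}=b$ from Theorem \ref{thrm:recurrencesb_2}, the same algebra used in Theorem \ref{thm:gapstheorem} reduces the leading constant to $b(\lambda_1^{(s+1)b}-\lambda_1^{sb})=b^2$, so the main term is exactly $P(g_1)P(g_2)$.

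The principal obstacle will be showing that the total error, summed over all pairs $(g_1,g_2)$, tends to zero. Following the trichotomy in Theorem \ref{thm:gapstheorem}, I will split the double sum in $(j_1,j_2)$ into a bulk region (with $j_1$, $j_2-(j_1+g_1)$, and $n-(j_2+g_2)$ all at least $8\log n$) and boundary strips of size $O(n\log n)$ rather than $O(n^2)$. In the bulk the Binet corrections are $O(\varepsilon^{4\log n})$; the boundary strips yield an $O(\log n/n)$ multiplicative error relative to the main term; and the Taylor expansion $(n-g_1-g_2)^2/n^2 = 1 + O((g_1+g_2)/n)$ introduces a further $O((g_1+g_2)/n)$ multiplicative error. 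Each of these factors multiplies the geometric weight $\lambda_1^{-b(g_1+g_2)}$, and since
\begin{equation*}
\sum_{g_1,g_2 \ge s+1}(1+g_1+g_2)\lambda_1^{-b(g_1+g_2)} \ < \ \infty,
\end{equation*}
the summed error bound is $O(\log n/n)$, which vanishes as $n\to\infty$. The edge case $j_2+g_2=n$ contributes only $O(1/n)$ per $(g_1,g_2)$ and is absorbed in the same way.
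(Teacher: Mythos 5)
Your proposal is correct and follows essentially the same route as the paper's proof: the same five-piece combinatorial count of $X_{j_1,j_1+g_1,j_2,j_2+g_2}(n)$ with forced summands in the four marked bins plus $\mathcal{B}_n$, the same Binet-formula substitution, the same bulk/boundary split at scale $\log n$, and the same use of $\lambda_1^{(s+1)b}-\lambda_1^{sb}=b$ to identify the constant with $P(g_1)P(g_2)$. Your added bookkeeping (the boundary case $j_2+g_2=n$, the $(n-g_1-g_2)^2/2$ pair count, and the explicit geometric-series check that the error is summable over $(g_1,g_2)$) is slightly more careful than the paper's treatment but does not change the argument.
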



\begin{proof}
Assume $j_1<j_2$. We compute $X_{j_1,j_1+g_1, j_2,j_2+g_2}(n)$: We  take a summand each from bins $\mathcal{B}_{j_1}$ and $\B_{j_1+g_1}$  and again from bins $\mathcal{B}_{j_2}$ and $\B_{j_2+g_2}$, and finally since $z\in I_n=[a_{(n-1)b+1},a_{nb+1})$,  $z$ must contain a summand from bin $\mathcal{B}_n$. Additionally, we have freedom in selecting summands from bins $\B_1,\B_2,\ldots,\B_{j_1-(s+1)}$,
then from bins  $\B_{j_1+g_1+(s+1)},\B_{j_1+g_1+(s+2)},\ldots,\B_{j_2-(s+1)}$, and lastly from bins $\B_{j_2+g_2+(s+1)},\B_{j_2+g_2+(s+2)},\ldots,\B_{n-(s+1)}.$

The number of ways to choose summands legally from  $\B_1,\B_2,\ldots,\B_{j_1-(s+1)}$ is $a_{(j_1-s-1)b+1}-1$; the number of ways to choose summands legally from $\B_{j_1+g_1+(s+1)}$, $\B_{j_1+g_1+(s+2)}\ldots,\B_{j_2-(s+1)}$ is $a_{(j_2-j_1-g_1-2s-1)b+1}-1$; the number of ways to choose summands legally from  $\B_{j_2+g_2+(s+1)},$ $\B_{j_2+g_2+(s+2)},\ldots,\B_{n-(s+1)}$ is given by   $a_{(n-j_2-g_2-2s-1)b+1}-1$.  Hence

\begin{align}X_{j_1,j_1+g_1,j_2,j_2+g_2}(n)
&\ = \ b^5[a_{(j_1-s-1)b+1}-1][a_{(j_2-j_1-g_1-2s-1)b+1}-1][a_{(n-j_2-g_2-2s-1)b+1}-1].\nonumber\\
\label{gapapprox}
\end{align}
Using the explicit form for the terms of the $(s,b)$-Generacci sequence given in Equation \eqref{an_explicit_vareps},
Equation \eqref{gapapprox} yields
\begin{align}
X_{j_1,j_1+g_1,j_2,j_2+g_2}(n)& =   b^5c_{1}^3\lambda_{1}^{(n-5s-3)b+3}\left(\lambda_{1}^{b}\right)^{-(g_1+g_2)}(1+O(\varepsilon^{j_1b}))(1+O(\varepsilon^{(j_2-j_1)b}))(1+O(\varepsilon^{(n-j_2)b})),\end{align}
where it is important to recall that $\varepsilon<1$.

\begin{align}
\mbox{Let \ } S_n  &= \ \{(j_1,j_2): \ 1 \leq j_1 < j_2 \leq n\},  \mbox{and}\nonumber\\
T_n  &= \ \{(j_1, j_2) \in S_n: \  8 \log n \le j_1 < j_2 < n - 8 \log n, \ j_2 - j_1 > 8\log n\}. \nonumber
\end{align}
Then for $(j_1, j_2) \in T_n$,  \ $\varepsilon^{j_1b} \leq \varepsilon^{8\log n}, \
\varepsilon^{(j_2-j_1)b} \leq \varepsilon^{8\log n}, \
\varepsilon^{(n-j_2)b} \leq \varepsilon^{8\log n},$ which implies that all lower order terms are negligibly small relative to the main term. Also, note  that the  sum of $1$ over all   $(j_1, j_2) \in S_n \setminus T_n$ is of order $n \log n$. Thus
\begin{align}
\displaystyle \frac{2\displaystyle\sum_{(j_1, j_2) \in S_n}X_{j_1,j_1+g_1, j_2,j_2+g_2}(n)}{|I_n|\mu_n^2}
&= \ \displaystyle \frac{2\left(\displaystyle\sum_{(j_1, j_2) \in T_n}\hspace{-.1in}X_{j_1,j_1+g_1, j_2,j_2+g_2}(n)  \ +  \displaystyle\sum_{(j_1, j_2) \in S_n \setminus T_n}\hspace{-.2in}X_{j_1,j_1+g_1, j_2,j_2+g_2}(n)\right)}{|I_n|\mu_n^2}\nonumber\\
 &\hspace{-1.1in}= \ \displaystyle \frac{b^6c_1^2}{C^2\lambda_1^{(6s+2)b-2}(\lambda_1^b-1)^2}(\lambda_1^b)^{-g_1}(\lambda_1^b)^{-g_2} \frac{\lambda_1^{sb} (\lambda_1^b - 1)}{b} \left[1 + O(\log n / n)\right]   \nonumber\\
 &\hspace{-1.1in}    = \ \displaystyle P(g_1) P(g_2)\left[1 + O(\log n / n)\right] ,
\end{align}
the last equality follows immediately from \eqref{valuePg_sb} and the fact that $\lambda_1$ is the largest root of the characteristic equation $x^{(s+1)b} - x^{sb} - b = 0$, the defining relation of our sequence $\{a_n\}$. As $P(g_1)P(g_2)$ sums to 1, the sum of the error term over all pairs $(g_1, g_2)$ goes to zero as required.
\end{proof}

\begin{proof}[Proof of Theorem \ref{indBinGap}] We simply need to  check that Conditions (1)--(3) of Theorem \ref{genthm2} hold. First we note that letting $c_1=b$, $d_1=1-b$ and $c_2=b$ and $d_2=1$, implies that the interval of interest is $I_n=[a_{b(n-1)+1},a_{bn+1})$.
Then Theorem \ref{thm:gaussian} shows the first part Condition \eqref{condition1} is satisfied. Now note that there are $n-1$ allowable bins from which to select summands and any $z\in I_n$ will have at most $\left\lceil\frac{n-1}{s+1}\right\rceil$ summands as there must be $s$ bins between each summand selected.
Hence for any $z\in I_n$, $k(z)\leq \left\lceil\frac{n-1}{s+1}\right\rceil<n$ which completes the proof that Condition \eqref{condition1} is satisfied. 
Condition \eqref{condition3} follows from Theorem \ref{thm:gapstheorem}. Finally, Condition \eqref{condition2}  follows from Proposition \ref{prop:error}.
\end{proof}



\section{Gaussianity and Gap Measures for Fibonacci Quilt}

The $(4,1)$-Generacci sequence yields Gaussian and Gap Measure results for  Greedy-6 decompositions. The Greedy-6 decomposition is almost the same as the legal decomposition from the $(4,1)$-Generacci sequence as the gap between almost all summands in a Greedy-6 decomposition is at least 5. The only difference is that for the Greedy-6 decomposition the last two summands can have indices differing by 2 (if that happens the subsequent index is at least 6 larger). This possible gap of length 2 does not matter in the limit.

\begin{proof}[Proof of Theorem \ref{thm:gaussianFQ}]  As the two decompositions are so similar, the Gaussianity result for the Greedy-6 decomposition  follows from that for the $(4,1)$-Generacci sequence. We  partition our integers $m$ into two distinct sets where the Greedy-6 decomposition $\mathcal{G}(m)$  starts with $q_n$ and either:

\begin{itemize}

\item ends with $q_4+q_2$ and the third smallest summand is at least $q_{10}$; or





\item all indices differ by at least 5.
\end{itemize}
Both of these cases have Gaussian behavior by Theorem \ref{thm:gaussian} specified to the $(4,1)$-Generacci Sequence. 

In the first case, Greedy-6 decompositions must have the summand $q_n$ as well as $q_4$ and $q_2$. Thus we do not have $q_1, q_3, q_5, q_6, q_7, q_8$ or $q_9$, but $q_{10}$ is possible. Define $$Q_{n,\alpha}:=\{m \in [q_n,q_{n+1})\mid  q_2 \  \text{and} \  q_4 \ \text{are summands in the Greedy-6 decomposition of} \ m \}.$$

Consider the $(4,1)$-Generacci sequence $\{a_n\}.$  Define the set of integers $$J_{n,\alpha} := \{\omega \in [a_n, a_{n+1})\mid a_1, a_2, \dots, a_9 \ \  \text{are not in the decomposition of } \omega \}.$$  As the integers in $J_{n,\alpha}$ decompose with $a_n$ as the largest summand and any legal set of summands from  $\{ a_{n-5}, a_{n-6}, \dots, a_{10}\},$ we have $|J_{n,\alpha}| = a_{n-14} - 1.$ Moreover, the bijection between the sets $J_{n,\alpha}$ and  $[1, a_{n-14})$ preserves the number of summands in a decomposition. As the number of summands in the $(4,1)$-Generacci legal decomposition of an integer 
from $[1, a_{n-14})$ is Gaussian, the number of summands in the $(4,1)$-Generacci legal decomposition of an integer 
from $J_{n,\alpha}$ is Gaussian.
There is a bijection between the sets $J_{n,\alpha}$ and $Q_{n,\alpha}$ that exactly increases the number of summands in a decomposition by 2,  hence the number of summands in the Greedy-6 legal decomposition of an integer chosen uniformly at random from $Q_{n,\alpha}$ is Gaussian.  The mean and variance of each of this Gaussian will differ  from the mean and variance of the $(4,1)$-Generacci sequence in the constant term, but as the mean and the variance  are of the form $An + B +o(1)$ and $C n + D +o(1)$, this shift does not matter in the limit.

All $m$ in the second case are in a bijection with all $\omega \in [a_n, a_{n+1})$ that precisely preserves the indices in the decompositions of $m$ and $\omega$. Hence the number of summands of such an $m$ is Gaussian.

Combining these two Gaussians distributions results in an overall Gaussian.
\end{proof}

\begin{proof}[Proof of Theorem \ref{thm:FQgapstheorem}] We first note that the proportion of gaps of length 2 is negligibly small  as $n\to\infty$. The number of gaps of a typical element is strongly concentrated on the order of $n$, so one extra gap of length 2 is proportionally only on the order of $1/n$, and thus in the limit will have zero probability.

For the remaining gap sizes, we break this problem into two cases as we did in the proof of Theorem \ref{thm:gaussianFQ}. We then argue identically as in the $(4,1)$-Generacci case, and note that our proofs were entirely combinatorial; all that mattered was the number of ways to choose summands satisfying the legal rule.
 \end{proof}

\begin{rek} Note the utility of this perspective suggests some natural future questions: as the Fibonacci Quilt's Greedy-6 decomposition is just the $(4,1)$-Generacci with a tweak in the beginning, do other tweaks lead to geometrically interesting sequences?\end{rek}



\section{Range of Number of Summands for Fibonacci Quilt Decompositions}\label{sec:range}

We introduce the notion of gap strings to clean up manipulations by eliminating the need for cluttering the paper with sums and subscripts.

\begin{definition}
Let $q_{\ell_1} + q_{\ell_{2}} + \dots + q_{\ell_t}$ be any  decomposition of $m$ with $q_{\ell_i} \geq q_{\ell_{i+1}}$ for $i = 1, 2, \dots , t-1$.  The {\em gap string} of the decomposition is the $(t-1)$-tuple \begin{align}(\ell_1 - \ell_{2}, \ell_{2} - \ell_{3}, \dots, \ell_{t-1} - \ell_{t}).\end{align}
\end{definition}


From Theorem \ref{thm:Dm} we know the number of summands in the Greedy-6 decomposition of any $m$ is minimal and the corresponding gap string $(x_1, x_2, \dots, x_{\kmin{m}-1})$ has $x_i \geq 5$ for all $i$ except possibly $x_{\kmin{m}-1} = 2$ (i.e., the Greedy-6 decomposition used $q_4 + q_2$).

\begin{proof}[Proof of Theorem \ref{thm:kmaxkmin}]
From Theorem \ref{thm:Dm} the Greedy-6 decomposition is a minimal decomposition (i.e., no other legal Fibonacci Quilt decomposition uses fewer summands). We investigate  how many $m \in [q_n, q_{n+1})$ have $\kmax{m}-\kmin{m} \ge g(n)$ for a fixed function $g(n)$, and then see how large we may take it while ensuring the inequality holds for almost all $m$ in the interval. The argument below was chosen as it gives the optimal growth rate of $g(n)$ but not the optimal constant; with a little more work the value of $C_{\rm FQ}$ could be slightly increased, but a growth rate of essentially $\log(n)$ is the natural boundary of this approach.

Let $\mathcal{G} = (5,5, 10, 5, 5, 10, \dots, 5,5, 10)$ be a fixed gap pattern among $3g(n)+1$ addends. Note that the number of summands in a  decomposition of $m\in I_n$ can be increased by $g(n)$ if  the decomposition has a gap string that contains the substring
$\mathcal{G}$  beginning at  $q_{A + 20g(n)}$ with $10 +  20g(n) \leq A + 20g(n) \leq n$. Using recurrence relations ($q_n+q_{n-2}=q_{n+1}+q_{n-5}$ and $q_n+q_{n-4}=q_{n+1}$ proved in \cite{CFHMN2}) we get a new  FQ-legal decomposition of $m$ where the only difference is that substring $\mathcal{G}$ is replaced with the substring $\mathcal{G}^\prime = (6,2,7,5,6,2,7,5, \dots, 6, 2, 7, 5)$.\footnote{For example, replacing string $(5, 5, 10)$ with $(6, 2, 7, 5)$ can be seen as $q_{30+\ell}\,+ \,q_{25+\ell}\,+\, q_{20+\ell}\,+\, q_{10+\ell}\,=\, q_{30+\ell}\,+\, q_{24+\ell}\,+\, q_{22+\ell}\,+\, q_{15+\ell}\,+\, q_{10+\ell}$.}  The  starting and ending summands remain the same but there are now $4g(n)+1$ summands indicated by the gap substring. Hence for such $m$, $\kmax{m} - \kmin{m} \geq g(n)$.

We break the set of Fibonacci  Quilt summands $\{q_n, \dots, q_1\}$ into  adjacent and non-overlapping blocks of length $20g(n)+1$;  the number of such complete blocks is $\lfloor \frac{n}{20g(n)+1}\rfloor$. There are $2^{20g(n)+1}$ ways to choose which summands in a given block we take, and at least one of them is the desired gap pattern $\mathcal{G}$. Thus the probability that a given decomposition has pattern $\mathcal{G}$ is at least $1/2^{20g(n)+1}$, so the probability that we do not have $\mathcal{G}$ is at most $1 - 1/2^{20g(n)+1}$. Therefore the probability that the pattern occurs at least once is 
\begin{equation}{\rm Pr}(\mbox{gap substring  } \mathcal{G} \mbox{  occurs in the gap string of }m) \ \geq \  1 - \left(1 - 1/2^{20g(n)+1}\right)^{\lfloor{\frac{n}{20g(n)+1}}\rfloor}.\end{equation} To show this tends to 1 we just need to show the subtracted quantity tends to zero, or equivalently that its logarithm tends to $-\infty$; for large $n$ this is \begin{equation}\left\lfloor{\frac{n}{20g+1}}\right\rfloor \log\left(1 - 1/2^{20g(n)+1}\right) \ \le \ -\frac{n}{21 g(n)} \frac{1/2}{2^{20g(n)}} \ = \ -\frac{2}{21} \frac{n}{g(n) e^{20 g(n) \log(2)}}.\end{equation} If we take $g(n) = C_{\rm FQ} \log(n)$ then \begin{equation}\left\lfloor{\frac{n}{20g+1}}\right\rfloor \log\left(1 - 1/2^{20g(n)+1}\right) \ \le \ -\frac{2}{21 C_{\rm FQ}} \frac{n}{n^{20C_{\rm FQ}\log 2} \log(n)}, \end{equation} which tends to $-\infty$ so long as $C_{\rm FQ} <  1 / 20\log 2$, completing the proof.
\end{proof}

\begin{rek} We could increase the constant $C_{\rm FQ}$ slightly if we replace $2^{20g(n)+1}$ by the number of legal decompositions there are involving the $20g(n)+1$ summands. This is on the order of $q_{20g(n)+1}$; while this is an exponentially growing sequence, it has a smaller base. If we wish to increase the constant by replacing the inequality with an equality we would then have to worry about the logarithm in the denominator. While this could be done at the cost of a more complicated expression, as it is essentially the same size we do not pursue that here.
\end{rek}



\section{Future Research}

We end with a list of additional problems to study for the Fibonacci Quilt; this is a particularly appealing sequence to investigate as it is similar to a PLRS, but is not and has already been shown to have the same behavior for some problems but very different in others. Recall $d(m)$ denotes the number of legal decompositions of $m$ by the Fibonacci quilt.

\begin{itemize}

    \item Can we solve $d(m) = \ell$ for fixed $\ell$? What about $d(m) \le w(m)$ for some fixed increasing function $w$?

    \item How rapidly does $\max_{m \le N} d(m)$ go to infinity?

    \item For $m \le N$, what does the distribution of $d(m)$ look like?

\item Let $K_{\rm min}(m)$ be the fewest number of summands needed in a Fibonacci quilt legal decomposition of $m$ (and similarly define $K_{\rm max}$, $K_{\rm ave}$).  What can we say about $K_{\rm min}$ and $K_{\rm max}$?

        \item Find all $m$ such that $K_{\rm min}(m) = K_{\rm max}(m)$.

\item How does $K_{\rm ave}(m)$ compare to $K_{\rm min}$ and $K_{\rm max}$?  Is is closer to one or the other for all $m$?

\end{itemize}

\appendix

\section{Generating Function Identities for $(s,b)$-Generacci}

Throughout the following we rely on the fact that $(s,b)$-Generacci legal decompositions are unique and thus the largest integer having a legal decomposition using $\{a_1,\ldots,a_n\}$ is less than $a_{n+1}$ by Definition 1.2. See Theorem 1.9 in \cite{CFHMN2} for details.

Let $q_{n,k}$ (with $n,k\geq0$) denote the number of $m\in[0,a_{nb+1})$ whose $(s,b)$-Generacci  legal decomposition contains exactly $k$ summands, where $a_{nb+1}$ is the first entry in the $(n+1)$st bin of size $b$. By definition it is clear that $q_{n,0}=1$ and $q_{n,1}={nb}$  for all $n$. If $n< s+1$ and $k\geq2$, then $q_{n,k}=0$.

\begin{proposition}\label{Fxy}
For $q_{n,k}$ as above, if $n \geq s+1$ and $k\leq\frac{n+s}{s+1}$ then
\begin{equation}
q_{n,k}\ = \ b\cdot q_{n-(s+1),k-1}+q_{n-1,k},\label{qnk_recurrence}
\end{equation}
and if $ k>\frac{n+s}{s+1}$, then $q_{n,k}=0$.

Let $H(x,y)=\sum_{n=0}^{\infty}\sum_{k=0}^{n_{*}}q_{n,k}x^ny^k$ with $n_* = \lceil\frac{n+s}{s+1}\rceil.$ The closed form expression of $H(x,y)$ is
\begin{align}H(x,y)&\ = \ \frac{1+by(x+x^2+\cdots+x^s)}{1-x-byx^{s+1}}.\label{functionH(x,y)}\end{align}
\end{proposition}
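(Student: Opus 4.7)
The plan is to establish the recurrence \eqref{qnk_recurrence} by a direct combinatorial partition, then derive the closed form \eqref{functionH(x,y)} by clearing denominators and matching coefficients.

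For the recurrence, I would fix $n \ge s+1$ and partition the integers $m \in [0, a_{nb+1})$ with a $k$-summand legal decomposition according to whether the decomposition uses any element of the top bin $\mathcal{B}_n = \{a_{(n-1)b+1},\dots,a_{nb}\}$. Decompositions avoiding $\mathcal{B}_n$ are in obvious bijection with $k$-summand legal decompositions of integers in $[0, a_{(n-1)b+1})$, contributing $q_{n-1,k}$. Decompositions that include an element of $\mathcal{B}_n$ have $b$ choices for that top summand; the $(s,b)$-Generacci legality rule then forbids any further summand from $\mathcal{B}_{n-s}\cup\cdots\cup\mathcal{B}_n$, so the remaining $k-1$ summands form a legal decomposition of an integer in $[0, a_{(n-s-1)b+1})$, contributing $b\cdot q_{n-(s+1),k-1}$. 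Adding the two cases yields \eqref{qnk_recurrence}. The vanishing statement $q_{n,k}=0$ for $k>(n+s)/(s+1)$ follows because consecutive chosen bins must differ in index by at least $s+1$, so any $k$-summand legal decomposition drawn from bins $\mathcal{B}_1,\dots,\mathcal{B}_n$ forces $1 + (k-1)(s+1) \le n$.

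For the closed form, I would clear the denominator in \eqref{functionH(x,y)} and verify the equivalent identity
\begin{equation*}
(1 - x - byx^{s+1})\,H(x,y) \ = \ 1 + by(x + x^2 + \cdots + x^s)
\end{equation*}
by extracting the coefficient of $x^n y^k$ on both sides, using the convention $q_{n,k}=0$ for $n<0$ or $k<0$. The left coefficient is $q_{n,k} - q_{n-1,k} - b\,q_{n-(s+1),k-1}$; for $n\ge s+1$ (and $k$ in the effective range) this vanishes by the recurrence, matching the right-hand side. For $n=0$ only $q_{0,0}=1$ contributes, giving the constant term $1$. For $1\le n\le s$ the only nonzero entries arise from $k=0$ (which yields $q_{n,0}-q_{n-1,0}=0$) and $k=1$ (which yields $q_{n,1}-q_{n-1,1}=nb-(n-1)b=b$, since $q_{n-(s+1),0}=0$ when $n-(s+1)<0$), matching the RHS coefficient of $byx^n$ exactly.

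The main obstacle is purely bookkeeping: one must carefully handle the boundary cases $0 \le n \le s$ where the recurrence does not yet apply and the convention that $q_{n,k}$ vanishes outside its natural index range. Once the initial data $q_{0,0}=1$, $q_{n,0}=1$, and $q_{n,1}=nb$ are tabulated, the generating-function identity becomes essentially tautological and no further analytic input is needed.
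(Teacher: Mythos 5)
Your proof is correct and follows essentially the same route as the paper: the recurrence is obtained by the identical partition on whether the decomposition uses a summand from the top bin $\mathcal{B}_n$ (with the $s+1$-bin exclusion forcing the remaining $k-1$ summands into $[0,a_{(n-(s+1))b+1})$), and the closed form rests on the same recurrence plus the same boundary data $q_{n,0}=1$, $q_{n,1}=nb$, and $q_{n,k}=0$ for $n\le s$, $k\ge 2$. The only difference is stylistic: the paper derives the numerator $h(x,y)=1+by(x+\cdots+x^s)$ by shifting the double sums and summing the boundary terms explicitly (via geometric-series and derivative identities), whereas you verify the proposed rational function by extracting the coefficient of $x^ny^k$ in $(1-x-byx^{s+1})H(x,y)$ -- a slightly cleaner presentation of the same computation.
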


\begin{proof} For the first part, note that if $n < 1 +(k-1)(s+1)$, then $q_{n,k}=0$ as we would not have a large enough span to have a $(s,b)$-Generacci legal decomposition with $k$ summands.
Let $\mathcal{B}_i$ denote the $i$\textsuperscript{th} bin in the $(s,b)$-Generacci sequence. If $m\in[0,a_{nb+1})=[0,a_{nb+1}-1]$, then the possible summands come from the set $\{a_1,a_2,a_3,\ldots,a_{nb}\}\subset[0,a_{nb+1})$. If all $k$ summands come from the first $n-1$ bins, that is if they come from the set $\{a_1,a_2,a_3,\ldots,a_{(n-1)b}\},$ then by definition there are $q_{n-1,k}$ many such elements of $[0,a_{(n-1)b+1})$. Now if $m\in[0,a_{nb+1})$ contains a summand from the bin $\mathcal{B}_n$ then the remaining $k-1$ summands must come from the set $\{a_1,a_2,a_3,\ldots,a_{(n-(s+1))b}\}$. This reflects the fact that an $(s,b)$-Generacci  legal decomposition includes summands which must be at least $s+1$ bins away from the entry in the bin $\mathcal{B}_n$. Hence, if $m\in[0,a_{nb+1})$ contains a summands from the bin $\mathcal{B}_n$ (there are $b$ many such possible summands), then there are $b\cdot q_{n-(s+1),k-1}$ many such $m$.
Therefore $ q_{n,k}=b\cdot q_{n-(s+1),k-1}+q_{n-1,k}$. \\ \

For the second part, let $H(x,y)=\sum_{n=0}^{\infty}\sum_{k=0}^{n_{*}}q_{n,k}x^ny^k$.
Then, using \eqref{qnk_recurrence},
\begin{align}
H(x,y)
\ =\ &bx^{s+1}y\displaystyle\sum_{n=s+1}^{\infty}\displaystyle\sum_{k=1}^{n_{*}}q_{n-(s+1),k-1}x^{n-(s+1)}y^{k-1}+x\displaystyle\sum_{n=s+1}^{\infty}\displaystyle\sum_{k=1}^{n_{*}}q_{n-1,k}x^{n-1}y^k\nonumber\\
&\ +\ \displaystyle  \sum_{n=s+1}^\infty q_{n,0}x^n +      \sum_{n=0}^{s}\displaystyle\sum_{k=0}^{n_{*}}q_{n,k}x^ny^k.
\end{align}

Shifting the first sum with $n \leftrightarrow n-(s+1)$ and $k\leftrightarrow k-1$ and the second sum with $n\leftrightarrow n-1$ we have that
\begin{align}
H(x,y)&\ = \ bx^{s+1}y\displaystyle\sum_{n=0}^{\infty}\displaystyle\sum_{k=0}^{n_{*}}q_{n,k}x^{n}y^{k}+x\displaystyle\sum_{n=s}^{\infty}\displaystyle\sum_{k=1}^{n_{*}}q_{n,k}x^{n}y^k+\sum_{n=s+1}^\infty q_{n,0}x^n +\displaystyle\sum_{n=0}^{s}\displaystyle\sum_{k=0}^{n_{*}}q_{n,k}x^ny^k\nonumber\\
&\ = \ bx^{s+1}y\displaystyle\sum_{n=0}^{\infty}\displaystyle\sum_{k=0}^{n_{*}}q_{n,k}x^{n}y^{k}+x\displaystyle\sum_{n=s}^{\infty}\displaystyle\sum_{k=0}^{n_{*}}q_{n,k}x^{n}y^k+\displaystyle\sum_{n=0}^{s}\displaystyle\sum_{k=0}^{n_{*}}q_{n,k}x^ny^k\nonumber\\
&\ = \ bx^{s+1}yH(x,y)+x\left[H(x,y)-\displaystyle\sum_{n=0}^{s-1}\displaystyle\sum_{k=0}^{n_{*}}q_{n,k}x^{n}y^k\right]+\displaystyle\sum_{n=0}^{s}\displaystyle\sum_{k=0}^{n_{*}}q_{n,k}x^ny^k\nonumber\\
&\ = \ bx^{s+1}yH(x,y)+xH(x,y)-x\displaystyle\sum_{n=0}^{s-1}\displaystyle\sum_{k=0}^{n_{*}}q_{n,k}x^{n}y^k+\displaystyle\sum_{n=0}^{s}\displaystyle\sum_{k=0}^{n_{*}}q_{n,k}x^ny^k.
\end{align}
Thus
\begin{equation}
\label{F-equation}
H(x,y)\ = \ bx^{s+1}yH(x,y)+xH(x,y)+h(x,y),
\end{equation}
where \begin{equation}h(x,y)\ = \ -x\displaystyle\sum_{n=0}^{s-1}\displaystyle\sum_{k=0}^{n_{*}}q_{n,k}x^{n}y^k+\displaystyle\sum_{n=0}^{s}\displaystyle\sum_{k=0}^{n_{*}}q_{n,k}x^ny^k.\end{equation}
Solving for $H(x,y)$ in Equation \eqref{F-equation} gives
\begin{equation}H(x,y)\ = \ \frac{h(x,y)}{1-x-byx^{s+1}}.\end{equation} To complete the proof it suffices to show that $h(x,y)=1+by(x+x^2+\cdots+x^s)$.
First observe that
\begin{align}
h(x,y)&\ = \ -x\displaystyle\sum_{n=0}^{s-1}\displaystyle\sum_{k=0}^{n_{*}}q_{n,k}x^{n}y^k+\displaystyle\sum_{n=0}^{s-1}\displaystyle\sum_{k=0}^{n_{*}}q_{n,k}x^ny^k+\displaystyle\sum_{k=0}^{n_*}q_{s,k}x^sy^k\nonumber\\
&\ = \ (1-x)\displaystyle\sum_{n=0}^{s-1}\displaystyle\sum_{k=0}^{n_{*}}q_{n,k}x^{n}y^k+x^s\displaystyle\sum_{k=0}^{n_*}q_{s,k}y^k.\label{eq.h.1}
\end{align}
Recall that $q_{s,k} = 0$ for $k \geq 2$. Also,   if $0\leq n\leq s$, then $q_{n,k}=\begin{cases}1&\mbox{if $k=0$}\\nb&\mbox{if $k=1$}\\0&\mbox{if $k>1$.}\end{cases}$  \\
Hence
\begin{equation}
\label{eq.h.2}
\displaystyle\sum_{k=0}^{n_*}q_{s,k}y^k \ = \  q_{s,0}+q_{s,1}y \ = \  1 + sby.
\end{equation}
Also
\begin{align}
\displaystyle\sum_{n=0}^{s-1}\displaystyle\sum_{k=0}^{n_{*}}q_{n,k}x^ny^k \ = \ \displaystyle\sum_{n=0}^{s-1}(q_{n,0}x^n+q_{n,1}x^ny) \ = \ \displaystyle\sum_{n=0}^{s-1}x^n+yb\displaystyle\sum_{n=0}^{s-1}nx^n. \label{eq.h.3}
\end{align}
Using
\begin{equation}
\displaystyle\sum_{n=0}^{s-1}x^n\ = \ \frac{1-x^{s}}{1-x}  \mbox{ \ and \ } \displaystyle\sum_{n=0}^{s-1}nx^n \ = \  x\displaystyle \frac{d}{dx}\left(\frac{1-x^{s}}{1-x} \right),
\end{equation}
Equation \eqref{eq.h.3} now yields
%
\begin{align}
\displaystyle\sum_{n=0}^{s-1}\displaystyle\sum_{k=0}^{n_*}q_{n,k}x^ny^k&\ = \ \frac{1-x^{s}}{1-x}+yb\left(\frac{x-sx^{s}+(s-1)x^{s+1}}{(1-x)^2}\right)\nonumber\\
&\ = \ \frac{1+(by-1)x-(bsy+1)x^{s}+(yb(s-1)+1)x^{s+1}}{(1-x)^2}.\label{eq2}
\end{align}

Finally, substituting  Equations \eqref{eq.h.2}  and \eqref{eq2} into Equation \eqref{eq.h.1}  gives
\begin{align}
h(x,y)&\ = \ (1-x)\left(\frac{1+(by-1)x-(bsy+1)x^{s}+(yb(s-1)+1)x^{s+1}}{(1-x)^2}\right)+(1+sby)x^s\nonumber\\
&\ = \ \frac{1+(by-1)x-(bsy+1)x^{s}+(yb(s-1)+1)x^{s+1}+(1+sby)x^s(1-x)}{1-x}\nonumber\\
&\ = \ [(1-x)+byx(1-x^s)]/(1-x)\nonumber\\
&\ = \ [(1-x)+byx(1-x)(1+x+\cdots+x^{s-1})]/(1-x)\nonumber\\
&\ = \ 1+by(x+x^2+\cdots+x^s),
\end{align}
which completes the proof.
\end{proof}



\begin{proof}[Proof of Theorem \ref{recurrencefromhell}] We first prove the formula for $p_{n,k}$. Let $a_i=0$ whenever $i\leq 0$. Note that $p_{0,0}=1$ since the interval $[a_{-b+1},a_1)=[0,a_1) = \{0\}$, and zero is the only element that can be written with zero summands.  Similarly, we have $p_{0,k}=0$ whenever $k\geq 1$. For any $n\geq 1$, $p_{n,0}=0$ as no element in the interval $[a_{(n-1)b+1},a_{nb+1})$ can be written with zero summands.

For any $1\leq n\leq s$, $p_{n,1}=b$ as the only elements in the interval $[a_{(n-1)b+1},a_{nb+1})$ with exactly one summand are the entries in the $n$th bin. If $1\leq n\leq s$ and $k\geq 2$, then $p_{n,k}=0$ as there are not enough bins from which we could legally select two summands to decompose elements of the interval $[a_{(n-1)b+1},a_{nb+1})$.

If $n\geq s+1$ and $1\leq k\leq \frac{n+s}{s+1}$, then  any $m$ counted by $p_{n,k}$ will require the decomposition to contain a summand from the $n$th bin and there are $b$ choices for this summand. So the number of integers which can now be created with $k-1$ summands coming from bins $\B_1, \B_2,\ldots, \B_{n-(s+1)}$ is exactly the value of $q_{n-(s+1),k-1}$. Therefore, $p_{n,k}=b\cdot q_{n-(s+1),k-1}$.

To complete the proof we note that if $n\geq s+1$ and $k> \frac{n+s}{s+1}$, then $p_{n,k}=0$ as there are not enough bins from which we could legally select $k$ summands to decompose elements of the interval $[a_{(n-1)b+1},a_{nb+1})$. \\ \

%

Notice that by Equations \eqref{qnk_recurrence} and \eqref{FQpnk} we have that if $n\geq s+1$ and $1\leq k\leq \frac{n+s}{s+1}$, then
\begin{align}
q_{n,k}\ = \ p_{n,k}+q_{n-1,k}.
\end{align}
Hence
\begin{align}
p_{n,k}&\ = \ q_{n,k}-q_{n-1,k}.\label{switchpwithq}
\end{align}

Armed with the above, we can easily finish the proof. Let $F(x,y)=\sum_{n=0}^{\infty}\sum_{k=0}^{n_{*}}p_{n,k}x^ny^k$.
Then, using Equation \eqref{switchpwithq},
\begin{align}
F(x,y)&\ = \ \displaystyle\sum_{n=s+1}^{\infty}\displaystyle\sum_{k=1}^{n_{*}}(q_{n,k}-q_{n-1,k})x^ny^k+\displaystyle\sum_{n=0}^{s}\sum_{k=0}^{n_*}p_{n,k}x^ny^k+\displaystyle\sum_{n=s+1}^{\infty}p_{n,0}x^n.\end{align}
By Equation \eqref{FQpnk}
we note that $p_{n,0}=0$ whenever $n\geq 1$ and $p_{n,k}=0$ whenever $0\leq n\leq s$ and $k\geq 2$. Therefore
\begin{align}
F(x,y)&\ = \ \displaystyle\sum_{n=s+1}^{\infty}\displaystyle\sum_{k=1}^{n_{*}}(q_{n,k}-q_{n-1,k})x^ny^k+p_{0,0}+ \displaystyle\sum_{n=0}^{s}p_{n,1}x^ny\nonumber\\
& \ = \  \displaystyle\sum_{n=s+1}^{\infty}\displaystyle\sum_{k=1}^{n_{*}}q_{n,k}x^ny^k-x\displaystyle\sum_{n=s+1}^{\infty} \displaystyle\sum_{k=1}^{n_{*}}q_{n-1,k}x^{n-1}y^k+1+by\displaystyle\sum_{n=1}^{s}x^n.\label{shiftneeded}
\end{align}
Shifting the second sum in Equation \eqref{shiftneeded} with $n \leftrightarrow n-1$ we have that
\begin{align}
F(x,y)&\ = \ \displaystyle\sum_{n=s+1}^{\infty}\displaystyle\sum_{k=1}^{n_{*}}q_{n,k}x^ny^k-x\displaystyle\sum_{n=s}^{\infty}\displaystyle\sum_{k=1}^{n_{*}}q_{n,k}x^ny^k+1+by \sum_{n=1}^{s}x^n\nonumber\\ & \ = \  (1-x)\left(\displaystyle\sum_{n=s+1}^{\infty}\sum_{k=1}^{n_*}q_{n,k}x^ny^k\right) +1+by\sum_{n=1}^{s}x^n -sbyx^{s+1}.
\end{align}
Continuing \small
\begin{align}
F(x,y)&\ = \ (1-x)\left(\displaystyle\sum_{n=0}^{\infty}\sum_{k=0}^{n_*}q_{n,k}x^ny^k-\displaystyle\sum_{n=0}^{s}\sum_{k=1}^{n_*}q_{n,k}x^ny^k-\sum_{n=0}^\infty q_{n,0}x^n\right)+1+by\sum_{n=1}^{s}x^n-sbyx^{s+1}\nonumber\\
&\ = \ (1-x)\left(H(x,y)-\displaystyle\sum_{n=0}^{s}q_{n,1}x^ny-\sum_{n=0}^\infty q_{n,0}x^n\right)+1+by\sum_{n=1}^{s}x^n-sbyx^{s+1}\nonumber\\
&\ = \ (1-x)\left(H(x,y)-by\sum_{n=0}^{s}nx^n-\sum_{n=0}^\infty x^n\right)+1+by\sum_{n=1}^{s}x^n-sbyx^{s+1}\nonumber\\
&\ = \ (1-x)\left(H(x,y)-byx\frac{d}{dx}\left(\frac{1-x^{s+1}}{1-x}\right)   -\frac{1}{1-x}\right)+1+by\sum_{n=1}^{s}x^n-sbyx^{s+1}\nonumber\\
&\ = \ (1-x)\left(H(x,y)-byx\left(\frac{(1-x)(-(s+1)x^s)+(1-x^{s+1})}{(1-x)^2}\right)-\frac{1}{1-x}\right)+1+by\sum_{n=1}^{s}x^n-sbyx^{s+1}\nonumber\\
&\ = \ (1-x)\left(H(x,y)-byx\left(\frac{-(s+1)x^s+sx^{s+1}+1}{(1-x)^2}\right)-\frac{1}{1-x}\right)+1+by\sum_{n=1}^{s}x^n-sbyx^{s+1}\nonumber\\
&\ = \ (1-x)\left(H(x,y)+\frac{byx((s+1)x^s-sx^{s+1}-1)+(x-1)}{(1-x)^2}\right)+1+by\sum_{n=1}^{s}x^n-sbyx^{s+1}\nonumber\\
&\ = \ (1-x)H(x,y)+\frac{byx((s+1)x^s-sx^{s+1}-1)+(x-1)}{1-x}+\frac{1-x}{1-x}+\frac{by(x-x^{s+1})}{1-x}-\frac{sbyx^{s+1}(1-x)}{1-x} \nonumber\\
&\ = \ (1-x)H(x,y)+\frac{sbyx^{s+1} + byx^{s+1} - sbyx^{s+2} - byx + byx - byx^{s+1} - sbyx^{s+1}+sbyx^{s+2}}{1-x}\nonumber\\
&\ = \ (1-x)H(x,y).
\label{subintothis}\end{align} \normalsize

Recall that Proposition \ref{Fxy} gives
\begin{align}
H(x,y)&\ = \ \displaystyle\sum_{n=0}^{\infty}\displaystyle\sum_{k=0}^{n_{*}}q_{n,k}x^ny^k\ = \ \frac{1+by(x+x^2+\cdots+x^s)}{1-x-byx^{s+1}},
\end{align}
and substituting this into Equation \eqref{subintothis} yields
\begin{align}
F(x,y)&\ =(1-x)\frac{1+by(x+x^2+\cdots+x^s)}{1-x-byx^{s+1}}\nonumber \\
& \ = \ 1 + \frac{byx}{1-x-byx^{s+1}}.
\end{align}
\end{proof}

\section{Generating Function for Greedy-6 FQ-Legal Decompositions}\label{sec:gaussianFQ}

We now return to the FQ sequence and a specific generating function associated to them.
Let $q_{n,k}$ (with $n,k\geq 0$) denote the number of
$m\in[0,q_{n+1})$ whose Greedy-6 FQ-legal decomposition contains exactly $k$ summands.

\begin{proposition}\label{HxyforFQ} Let $n,k\geq 0$. Then
\begin{align}
q_{n,k}\ = \
\begin{cases}
1&\mbox{{\rm if $k=0$ and $n\geq 0$, or if $k=2$ and $n=5$}}\\
n&\mbox{{\rm if $k=1$ and $n\geq 0$}}\\
1+\frac{(n-5)(n-4)}{2}&\mbox{{\rm if $k=2$ and $n\geq 6$}}\\
q_{n-5,k-1}+q_{n-1,k}&\mbox{{\rm if $k\geq 3$ and $n\geq 5(k-1)$}}\\
0&\mbox{{\rm if $k\geq 3$ and $n<5(k-1)$ or if $k=2$ and $n\leq 4$}},
\end{cases}
\end{align} and if $H(x,y)=\sum_{n\geq0}\sum_{k\geq0}q_{n,k}\ x^ny^k$ then
\begin{align}H(x,y)&\ = \ \frac{1+(x+x^2+x^3+x^4)y+x^5y^2}{1-x-yx^5}.\label{FQfunctionH(x,y)}\end{align}
\end{proposition}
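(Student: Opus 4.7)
The plan is to establish the piecewise formula for $q_{n,k}$ case by case using the structure theorem for Greedy-6 decompositions (Theorem~\ref{thm:greedy6}, together with the base case $6=q_4+q_2$ from Definition~\ref{alg:greedy6}), and then to deduce the closed form for $H(x,y)$ by verifying that $(1-x-yx^5)H(x,y)$ collapses to the claimed numerator polynomial.

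For the formula itself, $k=0$ forces $m=0$ and $k=1$ forces $m=q_\ell$ for $\ell\in\{1,\ldots,n\}$, giving the first two cases immediately. For $k=2$, I would split two-summand Greedy-6 decompositions into (a) those with $\ell_1-\ell_2\geq 5$, which via the shift $(\ell_1,\ell_2)\mapsto(\ell_1-4,\ell_2)$ are in bijection with strictly decreasing positive pairs bounded above by $n-4$ and therefore number $\binom{n-4}{2}=(n-5)(n-4)/2$, and (b) the exceptional $q_4+q_2=6$, which lies in $[0,q_{n+1})$ exactly when $n\geq 5$. This yields the stated values of $q_{n,2}$.

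For $k\geq 3$, the recurrence $q_{n,k}=q_{n-5,k-1}+q_{n-1,k}$ comes from splitting on whether $q_n$ is a summand. Not using $q_n$ restricts $m$ to $[0,q_n)$ and contributes $q_{n-1,k}$; using $q_n$ as the largest summand leaves a tail of $k-1$ summands that is itself a Greedy-6 decomposition with largest index $\leq n-5$, contributing $q_{n-5,k-1}$. The main subtlety is to confirm that prepending $q_n$ to every such tail produces a genuine Greedy-6 decomposition: for tails of form (1) of Theorem~\ref{thm:greedy6} all gaps remain $\geq 5$, while for tails of form (2) (including the two-summand tail $q_4+q_2$) the special ending is preserved, and the constraint $\ell_{t-2}\geq 10$ either transfers directly from the tail or, in the $q_4+q_2$ case, becomes $n\geq 10$, which is automatic under the hypothesis $n\geq 5(k-1)\geq 10$.

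For the generating function, I would compute
\begin{align*}
(1-x-yx^5)\,H(x,y)\ = \ \sum_{n\geq 0}\sum_{k\geq 0}\bigl(q_{n,k}-q_{n-1,k}-q_{n-5,k-1}\bigr)x^ny^k
\end{align*}
with the convention $q_{n,k}=0$ for negative indices. The recurrence kills every coefficient outside a small finite exceptional set; by direct substitution of the closed formulas at those indices the only surviving contributions are $1\cdot x^0y^0$, $1\cdot x^iy$ for $i\in\{1,2,3,4\}$, and $1\cdot x^5y^2$, reassembling the numerator $1+(x+x^2+x^3+x^4)y+x^5y^2$. The only nontrivial cancellation to verify lies on the $k=2$, $n\geq 6$ boundary, where one checks $q_{n,2}-q_{n-1,2}=(n-5)(n-4)/2-(n-6)(n-5)/2=n-5=q_{n-5,1}$. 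I expect this $k=2$ boundary step to be the main obstacle, since it is where the piecewise formula and the exceptional $q_4+q_2$ term interact; once it is handled, the rest of the argument is routine bookkeeping.
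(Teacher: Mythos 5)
Your argument is correct. The piecewise formula is handled essentially as in the paper: the paper only details the cases $k\geq 3$ (the vanishing for $n<5(k-1)$ and the recurrence via conditioning on whether $q_n$ is a summand), and your treatment of these, including the observation that $n\geq 5(k-1)\geq 10$ supplies the constraint $\ell_{t-2}\geq 10$ when the tail is $q_4+q_2$, matches it; your explicit count $q_{n,2}=1+\binom{n-4}{2}$ via the shift $(\ell_1,\ell_2)\mapsto(\ell_1-4,\ell_2)$ fills in a case the paper defers to the analogous computations of Proposition~\ref{recurrencefromhell}. Where you genuinely diverge is the generating function. The paper expands $H(x,y)$ directly into seven sub-sums $S_1,\dots,S_7$, evaluates each by geometric series and differentiation, and solves the resulting linear equation for $H$; you instead multiply by the denominator and read off that the coefficient of $x^ny^k$ in $(1-x-yx^5)H(x,y)$ is $q_{n,k}-q_{n-1,k}-q_{n-5,k-1}$, which vanishes outside a finite exceptional set. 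This is shorter and less error-prone, but it silently requires the recurrence to hold beyond the stated regime $k\geq 3$, $n\geq 5(k-1)$: one must also check $k=0$ and $k=1$ (where the survivors $1$ and $x+x^2+x^3+x^4$ arise because $q_{n-5,k-1}$ does not yet contribute), the region $k\geq 3$, $n<5(k-1)$ (where all three terms vanish), and the $k=2$ boundary, which you correctly identify as the crux and verify via $q_{n,2}-q_{n-1,2}=n-5=q_{n-5,1}$ for $n\geq 7$ together with the isolated values at $n=5,6$ that produce the $x^5y^2$ term. With those checks spelled out, your route is a clean and arguably preferable alternative to the paper's computation.
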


\begin{proof}
As the arguments follow from analogous computations as those presented in the proof of Proposition \ref{recurrencefromhell}, we only show the last two cases.

If $k\geq 2$ and $n\leq 4$ a simple observation shows that there are no $m\in[0,q_{n+1})$ which contain two summands in its Greedy-6 FQ-legal decomposition. Now suppose that $k\geq 3$ and $n$ $<$ $5(k-1)$ and let $m\in[0,q_{n+1})$ have $k$ summands in its Greedy-6 FQ-legal decomposition. Then by Theorem~\ref{thm:greedy6} the decomposition of $m$ is given by
\begin{align}
m&\ = \ q_{\ell_1}+q_{\ell+2}+\cdots+q_{\ell_{k-1}}+q_{\ell_k},
\end{align}
where $\ell_1>\ell_2>\cdots>\ell_{k-1}>\ell_k$ and satisfies one of the following conditions:
\begin{enumerate}
\item $\ell_i-\ell_{i+1}\geq 5$ for all $i$, or
\item $\ell_i-\ell_{i+1}\geq 5$ for all $i \leq k-3$, $\ell_{k-2}\geq 10$, $\ell_{k-1}=4$, and $\ell_{k}=2$.
\end{enumerate}
Notice if Condition (1) holds then
\begin{align}
n&\ \geq\ \ell_1\ \geq\ \ell_1-\ell_k \ = \ \sum_{i=1}^{k-1}\ell_{i}-\ell_{i+1}\ \geq\ \sum_{i=1}^{k-1}5  \ = \ 5(k-1).\end{align}
So if $n< 5(k-1)$, then $q_{n,k}=0$.\\

If Condition (2) holds then $\ell_{k-2}\geq 10$, and since $\ell_i-\ell_{i+1}\geq 5$ for all $i\leq k-3$ we see that $\ell_{k-j}\geq 5j$ for all $2\leq j\leq k-1$. Hence $\ell_1\geq 5(k-1)$. Since $\ell_1\leq n$, we have that $n\geq \ell_{1}\geq 5(k-1)$. So again if $n< 5(k-1)$, then $q_{n,k}=0$.

Now suppose that $k\geq 3$ and $n\geq 5(k-1)$. Notice that the arguments above show us that in this case $q_{n,k}\neq 0$. Now note that if $m\in I_n=[0,q_{n+1})$ contains $q_n$ as a summand, then the next possible summand using the Greedy-6 decomposition is $q_{n-5}$. So there are $q_{n-5,k-1}$ integers in $I_n$ which have $k$ summands and include $q_n$ as a summand. If $m\in I_n$ does not contain $q_n$, then the next possible summand is $q_{n-1}$. So there are $q_{n-1,k}$ integers in $ I_n$ which have $k$ summands and do not include $q_n$ as a summand. Thus $q_{n,k}=q_{n-5,k-1}+q_{n-1,k}$.\\ \

We now turn to the second part of the proof. Let $H(x,y)=\sum_{n\geq0}\sum_{k\geq0}q_{n,k}\ x^ny^k$. Then
\begin{align}
\displaystyle\sum_{n\geq0}\displaystyle\sum_{k\geq0}q_{n,k}\ x^ny^k&\ = \ S_1+S_2+S_3+S_4+S_5+S_6+S_7,
\end{align}
where
\begin{align}
S_1\ = \ &\displaystyle\sum_{n\geq 5(k-1)}\displaystyle\sum_{k\geq3}q_{n,k}\ x^ny^k\nonumber\\
S_2\ = \ &\displaystyle\sum_{n<5(k-1)}\displaystyle\sum_{k\geq 3}q_{n,k}\ x^ny^k\nonumber\\
S_3\ = \ &\displaystyle\sum_{n\geq 6}q_{n,2}\ x^ny^2\nonumber\\
S_4\ = \ &\displaystyle\sum_{4\leq n\leq 5}q_{n,2}\ x^ny^2\nonumber\\
S_5\ = \ &\displaystyle\sum_{n\leq 3}q_{n,2}\ x^ny^2\nonumber\\
S_6\ = \ &\displaystyle\sum_{n\geq 0}q_{n,1}\ x^ny\nonumber\\
S_7\ = \ &\displaystyle\sum_{n\geq 0}q_{n,0}\ x^n.
\end{align}
Using Proposition~\ref{HxyforFQ}, we have that
\begin{align}
S_2&\ = \ S_5\ = \ 0\nonumber\\
S_1&\ = \ \displaystyle\sum_{n\geq 5(k-1)}\displaystyle\sum_{k\geq3}(q_{n-5,k-1}+q_{n-1,k})\ x^ny^k\nonumber\\
S_3&\ = \ y^2\displaystyle\sum_{n\geq 6}\left(1+\frac{(n-5)(n-4)}{2}\right)\ x^n\ = \ y^2x^6\displaystyle\sum_{n\geq 0}x^n+\frac{y^2x^6}{2}\displaystyle\sum_{n\geq 2}(n-1)nx^{n-2}\nonumber\\
&\ = \ \frac{y^2x^6}{1-x}+\frac{y^2x^6}{2}\frac{d^2}{dx^2}\left(\frac{1}{1-x}\right)\ = \ \frac{y^2x^6}{1-x}+\frac{y^2x^6}{(1-x)^3}\nonumber\\
S_4&\ = \ x^5y^2\nonumber\\
S_6&\ = \ y\displaystyle\sum_{n\geq 0}n\ x^n\ = \ xy\displaystyle\sum_{n\geq 1}n\ x^{n-1}\ = \ xy\frac{d}{dx}\left(\frac{1}{1-x}\right)\ = \ \frac{xy}{(1-x)^2}\nonumber\\
S_7&\ = \ \displaystyle\sum_{n\geq 0}x^n\ = \ \frac{1}{1-x}.
\end{align}
Now notice that
\begin{align}
S_1&\ = \ \displaystyle\sum_{n\geq 5(k-1)}\displaystyle\sum_{k\geq3}q_{n-5,k-1}\ x^ny^k+\displaystyle\sum_{n\geq 5(k-1)}\displaystyle\sum_{k\geq3}q_{n-1,k}\ x^ny^k\nonumber\\
&\ = \ x^5y\displaystyle\sum_{n-5\geq 5(k-2)}\displaystyle\sum_{k-1\geq2}q_{n-5,k-1}\ x^{n-5}y^{k-1}+x\displaystyle\sum_{n-1\geq 5(k-1)-1}\displaystyle\sum_{k\geq3}q_{n-1,k}\ x^{n-1}y^k\nonumber\\
&\ = \ x^5y\displaystyle\sum_{n\geq 5(k-2)}\displaystyle\sum_{k\geq2}q_{n,k}\ x^{n}y^{k}+x\displaystyle\sum_{n\geq 5(k-1)-1}\displaystyle\sum_{k\geq3}q_{n,k}\ x^{n}y^k\nonumber\\
&\ = \ x^5yA+xB,
\end{align}
where $A=\displaystyle\sum_{n\geq 5(k-2)}\displaystyle\sum_{k\geq2}q_{n,k}\ x^{n}y^{k}$ and $B=\displaystyle\sum_{n\geq 5(k-1)-1}\displaystyle\sum_{k\geq3}q_{n,k}\ x^{n}y^k$.

We have \begin{align}
H(x,y)
&\ = \ A
+\displaystyle\sum_{n\geq5(k-2)}\displaystyle\sum_{k< 2}q_{n,k}x^ny^k
+\displaystyle\sum_{n<5(k-2)}\displaystyle\sum_{k< 2}q_{n,k}x^ny^k
+\displaystyle\sum_{n<5(k-2)}\displaystyle\sum_{k\geq 2}q_{n,k}x^ny^k,
\end{align} and
\begin{align}
\displaystyle\sum_{n\geq5(k-2)}\displaystyle\sum_{k< 2}q_{n,k}x^ny^k
&\ = \ \displaystyle\sum_{n\geq -10}q_{n,0}x^n+y\displaystyle\sum_{n\geq -5}q_{n,1}x^n\nonumber\\
&\ = \ \displaystyle\sum_{n\geq 0}q_{n,0}x^n+y\displaystyle\sum_{n\geq 0}q_{n,1}x^n\nonumber\\
&\ = \ \displaystyle\sum_{n\geq 0}x^n+y\displaystyle\sum_{n\geq 0}nx^n\nonumber\\
&\ = \ \frac{1}{1-x}+\frac{xy}{(1-x)^2},
\end{align} and
\begin{align}
\displaystyle\sum_{n<5(k-2)}\displaystyle\sum_{k< 2}q_{n,k}x^ny^k
&\ = \ \displaystyle\sum_{n<-10}q_{n,0}x^n+\displaystyle\sum_{n<-5}q_{n,1}x^ny\ = \ 0,
\end{align}
and finally
\begin{align}
\displaystyle\sum_{n<5(k-2)}\displaystyle\sum_{k\geq 2}q_{n,k}x^ny^k
&\ = \ \displaystyle\sum_{n<0}q_{n,2}x^ny^2+\displaystyle\sum_{n<5(k-2)}\displaystyle\sum_{k\geq 3}q_{n,k}x^ny^k\ = \ 0
\end{align}
since $q_{n,2}=0$ whenever $n<0$ and also when $n<5(k-2)<5(k-1)$ and $k\geq 3$ we have $q_{n,k}=0$, thus$\displaystyle\sum_{n<5(k-2)}\displaystyle\sum_{k\geq 3}q_{n,k}x^ny^k=0$.

Therefore
\begin{align}
A&\ = \ H(x,y)-\left(\frac{1}{1-x}+\frac{xy}{(1-x)^2}\right).
\end{align}

Now observe that
\begin{align}
H(x,y)&\ = \ B
+\displaystyle\sum_{n\geq5(k-1)-1}\displaystyle\sum_{k<3}q_{n,k}x^ny^k
+\displaystyle\sum_{n<5(k-1)-1}\displaystyle\sum_{k<3}q_{n,k}x^ny^k
+\displaystyle\sum_{n<5(k-1)-1}\displaystyle\sum_{k\geq3}q_{n,k}x^ny^k,
\end{align} and
\begin{align}
\displaystyle\sum_{n\geq5(k-1)-1}\displaystyle\sum_{k<3}q_{n,k}x^ny^k
&\ = \ \displaystyle\sum_{n\geq 0}q_{n,0}x^n+y\displaystyle\sum_{n\geq 0}q_{n,1}x^n+y^2\displaystyle\sum_{n\geq 4}q_{n,2}x^n\nonumber\\
&\ = \ \displaystyle\sum_{n\geq 0}x^n+y\displaystyle\sum_{n\geq 0}nx^n+y^2\left[x^5+\displaystyle\sum_{n\geq6}\left(1+\frac{(n-5)(n-4)}{2}\right)x^n\right]\nonumber\\
&\ = \ \frac{1}{1-x}+\frac{xy}{(1-x)^2}+y^2x^5      +\frac{y^2x^6}{1-x}+\frac{y^2x^6}{(1-x)^3}\nonumber\\
&\ = \ \frac{1+y^2x^6}{1-x}+\frac{xy}{(1-x)^2}  +\frac{y^2x^6}{(1-x)^3}+y^2x^5    ,
\end{align}
and
\begin{align}
\displaystyle\sum_{n<5(k-1)-1}\displaystyle\sum_{k<3}q_{n,k}x^ny^k&\ = \ \displaystyle\sum_{n<-6}q_{n,0}x^n+\displaystyle\sum_{n<-1}q_{n,1}x^ny+\displaystyle\sum_{n<4}q_{n,2}x^ny^2\ = \ 0
\end{align}
and finally
\begin{align}
\displaystyle\sum_{n<5(k-1)-1}\displaystyle\sum_{k\geq3}q_{n,k}x^ny^k&\ = \ 0
\end{align}
since $q_{n,k}=0$ whenever $n<5(k-1)-1<5(k-1)$ and $k\geq 3$. Therefore
\begin{align}
B&\ = \ H(x,y)-\left(\frac{1+y^2x^6}{1-x}+\frac{xy}{(1-x)^2}  +\frac{y^2x^6}{(1-x)^3}+y^2x^5   \right).
\end{align}
Thus
\begin{align}
S_1&\ = \ x^5y\left[H(x,y)-\left(\frac{1}{1-x}+\frac{xy}{(1-x)^2}\right)\right]\nonumber\\
&\hspace{.5in}+x\left[H(x,y)-\left(\frac{1+y^2x^6}{1-x}+\frac{xy}{(1-x)^2}  +\frac{y^2x^6}{(1-x)^3}+y^2x^5   \right)\right],
\end{align}
and
\begin{align}
H(x,y)&\ = \ S_1+S_2+S_3+S_4+S_5+S_6+S_7\nonumber\\
&\ = \ x^5y\left[H(x,y)-\left(\frac{1}{1-x}+\frac{xy}{(1-x)^2}\right)\right]\nonumber\\
&\hspace{.5in}+x\left[H(x,y)-\left(\frac{1+y^2x^6}{1-x}+\frac{xy}{(1-x)^2}  +\frac{y^2x^6}{(1-x)^3}+y^2x^5   \right)\right]\nonumber\\
&\hspace{.5in}
+\frac{y^2x^6}{1-x}+\frac{y^2x^6}{(1-x)^3}
+ x^5y^2
+\frac{xy}{(1-x)^2}
+\frac{1}{1-x},
\end{align}
which imply that
\begin{align}
H(x,y)(1-x-yx^5)&\ = \ -yx^5\left(\frac{1}{1-x}+\frac{xy}{(1-x)^2}\right)\nonumber\\
&\hspace{.2in}-x\left(\frac{1+y^2x^6}{1-x}+\frac{xy}{(1-x)^2}  +\frac{y^2x^6}{(1-x)^3}+y^2x^5   \right)\nonumber\\
&\hspace{.2in}+\frac{y^2x^6}{1-x}+\frac{y^2x^6}{(1-x)^3}
+ x^5y^2
+\frac{xy}{(1-x)^2}
+\frac{1}{1-x}\nonumber\\
&\ = \ 1+y(x+x^2+x^3+x^4)+y^2x^5.
\end{align}

Therefore
\begin{align}
H(x,y)&\ = \ \frac{1+y(x+x^2+x^3+x^4) + y^2x^5}{1-x-yx^5},
\end{align} completing the proof.

\end{proof}

Let $p_{n,k}$ (with $n,k\geq0$) denote the number of $m\in[q_n,q_{n+1})$ whose Greedy-6 FQ-legal decomposition contains exactly $k$ summands. Assuming that $q_0=0$ we have the following result.

\begin{proposition}\label{FxyforFQ}
If $n,k\geq 0$, then
\begin{equation} p_{n,k} \ = \ \begin{cases}
q_{n-5,k-1}&\mbox{{\rm if} $n\geq 5$ {\rm and} $k\geq 1$}\\
1&\mbox{{\rm if} $1\leq n\leq 4$ {\rm and} $k=1$, {\rm or\ if} $n=k=0$}\\
0&\mbox{{\rm otherwise},}\end{cases} \end{equation}
and if $F(x,y)=\sum_{n\geq0}\sum_{k\geq0}p_{n,k}x^ny^k$ then
\begin{align}F(x,y)&\ = \ \frac{1-x+xy-x^5y+x^{10}y^3}{1-x-x^5y}.\label{FQfunctionF(x,y)}\end{align}
\end{proposition}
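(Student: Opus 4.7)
The plan is to mirror the strategy used for Proposition \ref{recurrencefromhell}: first establish the piecewise formula for $p_{n,k}$ by a direct combinatorial argument based on the Greedy-6 recursion, and then assemble the generating function $F(x,y)$ and simplify it using the closed form of $H(x,y)$ already proved in Proposition \ref{HxyforFQ}.

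For the combinatorial formula, I would split by cases on $n$. For $0\le n\le 4$, the interval $[q_n,q_{n+1})$ is a singleton whose element is either $0$ or a Fibonacci Quilt number, so direct inspection yields $p_{0,0}=1$, $p_{n,1}=1$ for $1\le n\le 4$, and all other $p_{n,k}=0$ in this range. For $n\ge 5$ and $k\ge 1$, I would invoke Theorem \ref{thm:greedy6} together with the recurrence $q_{n+1}=q_n+q_{n-4}$ from Theorem \ref{thm:rrfibquilt}: the Greedy-6 decomposition of $m\in[q_n,q_{n+1})$ begins with the summand $q_n$, and the remainder $m-q_n$ lies in $[0,q_{n-4})$ with its own Greedy-6 decomposition using only indices $\le n-5$ by the gap-at-least-five condition (or the exceptional tail $q_4+q_2$ with previous index $\ge 10$). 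This gives a bijection between $m\in[q_n,q_{n+1})$ with $k$ summands and integers in $[0,q_{n-4})$ with exactly $k-1$ summands, so $p_{n,k}=q_{n-5,k-1}$.

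Having established the piecewise formula, I would compute $F(x,y)$ by splitting the defining double sum accordingly,
\begin{equation*}
F(x,y) \ =\ 1 + y(x+x^2+x^3+x^4) + \sum_{n\ge 5}\sum_{k\ge 1} q_{n-5,k-1}\,x^n y^k,
\end{equation*}
and re-indexing the double sum via $n\mapsto n+5$, $k\mapsto k+1$ (absorbing the vanishing terms $q_{0,k}$ for $k\ge 1$) to rewrite it as $x^5y\,H(x,y)$. Substituting the closed form of $H(x,y)$ from Proposition \ref{HxyforFQ} and bringing everything over the common denominator $1-x-yx^5$ reduces the assertion to a finite polynomial identity in the numerator, which I would verify by direct expansion and cancellation against the factor $1+(x+x^2+x^3+x^4)y+x^5y^2$.

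The step I expect to require the most care is the boundary bookkeeping at $n=5$: the Greedy-6 algorithm has an exceptional rule at $m=6\in[q_5,q_6)$ (where Greedy fails and is replaced by $q_4+q_2$), and one must carefully confirm that the recursive identity $p_{n,k}=q_{n-5,k-1}$ either captures this case or that its contribution is absorbed cleanly when combined with the low-degree correction terms in the numerator of $F(x,y)$. Once this compatibility is pinned down, the remaining algebraic manipulation is purely mechanical, following the same pattern as the derivation of the $(s,b)$-Generacci generating function in Proposition \ref{recurrencefromhell}.
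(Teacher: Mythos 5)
Your proposal is, in substance, the paper's own proof: the same identification $p_{n,k}=q_{n-5,k-1}$ for $n\ge 5$ via ``the Greedy-6 decomposition of $m\in[q_n,q_{n+1})$ begins with $q_n$,'' the same splitting of the double sum, the same reindexing to $F(x,y)=x^5y\,H(x,y)+1+y(x+x^2+x^3+x^4)$, and the same substitution of the closed form from Proposition \ref{HxyforFQ}. However, the boundary issue you flag at $n=5$ is not merely a point requiring care --- it is a place where the identity, and hence the proposition as stated, actually fails. The interval $[q_5,q_6)=\{5,6\}$ contains $m=6$, whose Greedy-6 decomposition is $q_4+q_2$ and therefore does \emph{not} contain $q_5$; thus $p_{5,2}=1$, whereas the claimed formula gives $p_{5,2}=q_{0,1}=0$, and correspondingly the coefficient of $x^5y^2$ in the displayed closed form for $F(x,y)$ is $0$. (The paper's own proof asserts that every $m\in[q_n,q_{n+1})$ with $k\ge 1$ summands ``contains $q_n$ as one of these summands,'' which is false precisely for $m=6$.) So when you go to ``carefully confirm that the recursive identity either captures this case or that its contribution is absorbed cleanly,'' you will find that neither happens: the statement needs either the extra case $p_{5,2}=1$ in the piecewise formula or, equivalently, the correction term $x^5y^2$, giving numerator $1-x+xy-x^5y+x^5y^2-x^6y^2$ over the same denominator $1-x-x^5y$.

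For every $n\ge 6$ your argument goes through exactly as written: by Definition \ref{alg:greedy6} the leading summand of any non-quilt-number $m\in[q_n,q_{n+1})$ is $q_n$, the remainder lies in $[0,q_{n-4})$, and the exceptional tail $q_4+q_2$ (which by Theorem \ref{thm:greedy6} only occurs with the preceding index at least $10$) is already counted correctly inside $q_{n-5,k-1}$ because Proposition \ref{HxyforFQ} builds it into $q_{5,2}=1$. Since only the single bounded coefficient $p_{5,2}$ is affected, the asymptotic applications (Gaussianity and gap measures for the Greedy-6 decomposition) are unharmed, but the proposition itself should be corrected.
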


\begin{proof} We first analyze $p_{n,k}$. The last two cases follow from the definition of $p_{n,k}$ so we focus only on proving that $p_{n,k}=q_{n-5,k-1}$ whenever $n\geq 5$ and $k\geq 1$. Let $m\in[q_n,q_{n+1})$ have exactly $k$ summands in its Greedy-6 FQ-legal decomposition. Then $m$ contains $q_n$ as one of these summands and the largest possible summand of $m-q_n$ is $q_{n+5}$, by definition of the Greedy-6 algorithm. This means that the number of $m\in[q_{n},q_{n+1})$ which have $k$ summands and contain $q_n$ as a summand is the same as the number of $z\in[0,q_{n-5})$ which contain exactly $k-1$ summands. This implies that $p_{n,k}=q_{n-5,k-1}$ as claimed.\\ \

Let $F(x,y)=\sum_{n\geq0}\sum_{k\geq0}p_{n,k}x^ny^k$.
By using Proposition~\ref{FxyforFQ}, we have that
\begin{align}
F(x,y) &\ = \ \displaystyle\sum_{n\geq 5}\displaystyle\sum_{k\geq 1}p_{n,k}x^ny^k+\displaystyle\sum_{n\geq 5}p_{n,0}x^n+\displaystyle\sum_{0\leq n\leq 4}\displaystyle\sum_{k\geq 1}p_{n,k}x^ny^k+\displaystyle\sum_{0\leq n\leq 4}p_{n,0}x^n\nonumber\\
&\ = \ x^5y\displaystyle\sum_{n\geq 5}\displaystyle\sum_{k\geq 1}q_{n-5,k-1}x^{n-5}y^{k-1}+[xy+x^2y+x^3y+x^4y]+1\nonumber\\
&\ = \ x^5y\displaystyle\sum_{n\geq 0}\displaystyle\sum_{k\geq 0}q_{n,k}x^{n}y^{k}+xy+x^2y+x^3y+x^4y+1.
 \end{align}
 By Proposition \ref{HxyforFQ}
\begin{align}
F(x,y)&\ = \ x^5yH(x,y)+xy+x^2y+x^3y+x^4y+1\nonumber\\
&\ = \ x^5y\left(\frac{1+y(x+x^2+x^3+x^4)+y^2x^5}{1-x-yx^5}\right)+xy+x^2y+x^3y+x^4y+1.\nonumber\\
&\ = \ \frac{1-x+xy-x^5y+x^{10}y^3}{1-x-x^5y}.
\end{align}
\end{proof}




\section{An Alternate Approach}

Let  $p_{n,k}$  (with $n,k\geq0$) denote the number of integers in $[0,a_{nb+1})$ whose $(s,b)$-Generacci  legal decomposition contains exactly $k$ summands. An explicit formula for the $p_{n,k}$'s  can be given in terms of binomial coefficients. We then use this explicit form to obtain an alternate expression for the $g_n(y)$'s. The arguments here are more elementary than those in \cite{MW1}; there, delicate generating function arguments were needed in order to obtain results valid for a large class of relations (the positive linear recurrences). If the recurrence is particularly simple, it is possible to avoid many of the technical obstructions. It should not be too surprising that such an approach is possible here, as the $(s,b)$-Generacci are a simple generalization of the Fibonacci numbers, and an elementary approach with explicit formulas involving binomial coefficients was available there (see \cite{KKMW}).

\begin{proposition}
\label{Pnk-explicit}
For all $k \geq 1$ and $n \geq 1 +(k-1)(s+1)$, we have \begin{align}p_{n,k} \ = \  b^k {{n-s(k-1)}\choose{k}}.\end{align}
\end{proposition}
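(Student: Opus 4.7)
My plan is to prove the formula by a direct bijective/counting argument rather than appealing to generating functions. The key observation is that an $(s,b)$-Generacci legal decomposition of an integer in $[0, a_{nb+1})$ with exactly $k$ summands is equivalent to the following data: a choice of $k$ bins among $\mathcal{B}_1, \ldots, \mathcal{B}_n$ such that any two chosen bins are separated by at least $s+1$ positions, together with a choice of one of the $b$ elements inside each chosen bin. The factor $b^k$ in the target formula corresponds to the latter choice, while the binomial coefficient will count the bin selections.

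First I would reduce to counting the number $N(n,k)$ of $k$-tuples $(i_1 < i_2 < \cdots < i_k)$ with $1 \leq i_1$, $i_k \leq n$, and $i_{t+1} - i_t \geq s+1$ for all $1 \leq t \leq k-1$. The standard shift bijection
\begin{equation*}
(i_1, i_2, \ldots, i_k) \ \longmapsto \ (j_1, j_2, \ldots, j_k) \ := \ (i_1, i_2 - s, i_3 - 2s, \ldots, i_k - (k-1)s)
\end{equation*}
produces a strictly increasing sequence $1 \leq j_1 < j_2 < \cdots < j_k \leq n - s(k-1)$ from any valid bin selection, and the map is clearly invertible. Hence $N(n,k) = \binom{n-s(k-1)}{k}$, and multiplying by $b^k$ yields the claimed formula. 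The condition $n \geq 1 + (k-1)(s+1) = k + s(k-1)$ is exactly what is needed to guarantee $n - s(k-1) \geq k$ so that the binomial coefficient is positive (equivalently, so that at least one legal $k$-summand decomposition exists).

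The only step requiring care will be verifying that the counting of bins carries the correct gap condition. In the $(s,b)$-Generacci legality rule, having summands in bins $\mathcal{B}_{i_t}$ and $\mathcal{B}_{i_{t+1}}$ with $i_t < i_{t+1}$ requires that no two lie within the same block of $s+1$ consecutive bins, i.e.\ $i_{t+1} - i_t \geq s+1$; this is immediate from the definition of a legal decomposition and is the only constraint between bins once at most one element per bin is forced. Every other verification is routine.

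As a sanity check / alternative route, the formula satisfies the recurrence of Proposition~\ref{Fxy}, namely $q_{n,k} = b\, q_{n-(s+1),k-1} + q_{n-1,k}$, because Pascal's identity gives
\begin{equation*}
\binom{n-s(k-1)}{k} \ = \ \binom{n-s(k-1)-1}{k-1} + \binom{n-s(k-1)-1}{k},
\end{equation*}
and the two terms on the right correspond (after multiplication by $b^k$) exactly to $b\cdot b^{k-1}\binom{(n-s-1)-s(k-2)}{k-1}$ and $b^k\binom{(n-1)-s(k-1)}{k}$. Together with the boundary values $p_{n,1} = bn$ (which matches $b^1\binom{n}{1}$) and $p_{n,0} = 1$, this gives a clean induction proof should one prefer to avoid the bijection. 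Either proof is short; I would present the bijection as the primary argument since it directly explains the shape $b^k\binom{n-s(k-1)}{k}$.
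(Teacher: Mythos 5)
Your proposal is correct and matches the paper's own treatment: the paper proves this proposition both by induction using the recurrence $p_{n,k} = b\,p_{n-(s+1),k-1} + p_{n-1,k}$ together with Pascal's identity, and by a stars-and-bars count of the admissible bin selections, which is exactly your index-shift bijection in a different guise. You have simply reversed the order of presentation, leading with the bijection and keeping the induction as a sanity check.
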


\begin{proof}
Suppose $n = 1 + (k-1)(s+1)$.  Then the only decompositions that use exactly $k$ summands come from the bins $\{\mathcal{B}_1, \mathcal{B}_{1+(s+1)}, \mathcal{B}_{1+2(s+1)}, \dots,  \mathcal{B}_{1+(k-1)(s+1)}=\mathcal{B}_n\}$.  As each bin has $b$  members, $\mathcal{B}_{1 + i(s+1)} = [a_{1 + i(s+1)b},  a_{2 + i(s+1)b}, \dots, a_{b + i(s+1)b}]$, we have $b^k$ different sums of the form $\sum_{i=0}^{k-1}a_{j_i+i(s+1)b}$ where $j_i \in \{1, 2, \dots, b\}$.  Thus, $p_{n,k} =p_{1 + (k-1)(s+1),k}=b^k$.  Noting that
\begin{align}
b^k {{n-s(k-1)}\choose{k}} &\ = \  b^k {{1 + (k-1)(s+1)-s(k-1)}\choose{k}}
 \ = \  b^k {{k}\choose{k}}
\ = \ b^k,
\end{align} we see the proposition holds for $n = 1 + (k-1)(s+1)$.

Now let $n > 1 + (k-1)(s+1)$. We have
\begin{align}
p_{n,k} & \ = \  bp_{n-(s+1),k-1}+p_{n-1,k}\nonumber\\
& \ = \  b\left[b^{k-1}{{(n-(s+1))-s(k-2)}\choose{k-1}}\right] + b^k{{(n-1)-s(k-1)}\choose{k}}\nonumber\\
&\ = \  b^k\left[{{n-s(k-1)-1}\choose{k-1}} + {{n-s(k-1)-1}\choose{k}}  \right]\nonumber\\
&\ = \  b^k{{n-s(k-1)}\choose{k}}.
\end{align}

Another approach for the proof is to consider the ``Cookie Monster'' approach (more commonly, but less entertainingly, referred to as the Stars and Bars method); see \cite{KKMW} for a detailed discussion of its use for the Fibonacci sequence. We are counting decompositions of the form $\sum_{i=1}^k a_{\ell_i}$ where $a_{\ell_1}$ is any member of bin $\mathcal{B}_{\ell_i}$.  Let's define $x_1 := \ell_1-1$ (the number of bins before $\mathcal{B}_{\ell_1}$).  For $2 \leq i \leq k,$ define $x_i :=\ell_i - \ell_{i-1}-1$ (the number of bins between $\mathcal{B}_{\ell_i} $ and $\mathcal{B}_{\ell_{i-1}}$) and set $x_{k+1} := n-\ell_{k}$ (the number of bins after $\mathcal{B}_{\ell_k}$).  We have
\begin{equation}x_1 + 1 + x_2 + 1 + x_3 + 1 +\dots + x_k + 1 + x_{k+1} \ = \  n.\end{equation}

Now define $y_1:= x_1,$ $y_{k+1}:=x_{k+1}$ and $y_i:=x_i-s$ for $2 \leq i \leq k$.  To have a legal decomposition our bins must be separated by at least $s$ other bins and so each $y_i \geq 0$.  Then we have
\begin{align}
y_1 + y_2 + \dots + y_{k} + y_{k+1} &\ = \  x_1 + (x_2-s) + \dots + (x_k-s) + x_{k+1}
\ = \  n-k-(k-1)s.
\end{align}

The number of $(k+1)$-tuples of non-negative integers whose sum is $n-k-(k-1)s$ is given by the binomial coefficient \begin{equation}{{n-k-(k-1)s+k} \choose {k}} \ = \ {{n-(k-1)s} \choose {k}}. \end{equation}

With the chosen bins, we can select any of the $b$ members within each bin, so we have $b^k$ sums.  Thus \begin{equation}p_{n,k} \ = \  b^k {{n-s(k-1)}\choose{k}}.\end{equation}
\end{proof}

Using  Proposition \ref{Pnk-explicit},  we get the following alternate expression for $g_n(y)$.

\begin{proposition}
If  $b\geq 1$, $s\geq 1$ and $n \geq 1 +(k-1)(s+1)$, then
\begin{align}g_n(y) \ = \  \sum_{k=0}^{\lceil\frac{n}{s+1}\rceil}  {{n-s(k-1)}\choose{k}}b^ky^k.
\end{align}

\end{proposition}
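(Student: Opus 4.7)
The plan is to deduce the identity directly from Proposition \ref{Pnk-explicit} by substitution into the defining series $g_n(y) = \sum_{k \geq 0} p_{n,k}\, y^k$. Since Proposition \ref{Pnk-explicit} asserts $p_{n,k} = b^k \binom{n-s(k-1)}{k}$ for $k \geq 1$ and $n \geq 1 + (k-1)(s+1)$, the main task is to verify that this equality extends (possibly vacuously) to all pairs $(n,k)$ appearing in the sum, and then to justify truncating the summation at $\lceil n/(s+1) \rceil$.

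First I would dispatch $k=0$: the only element of $[0, a_{nb+1})$ whose legal decomposition uses no summands is $0$ itself, so $p_{n,0} = 1$, matching $b^0 \binom{n+s}{0} = 1$. Next, for $k \geq 1$ and $n < 1 + (k-1)(s+1)$, I would note that $p_{n,k} = 0$, because there are not enough bins among $\mathcal{B}_1, \ldots, \mathcal{B}_n$ to host $k$ chosen summands while respecting the required $s$-bin separation; on the binomial side, the inequality $n < 1 + (k-1)(s+1)$ rearranges to $n - s(k-1) < k$, so $\binom{n-s(k-1)}{k} = 0$ in that regime. Hence the identity $p_{n,k} = b^k \binom{n-s(k-1)}{k}$ holds for all $n, k \geq 0$.

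The summation range follows from the same inequality: $\binom{n-s(k-1)}{k}$ vanishes whenever $k > (n+s)/(s+1)$, equivalently $k > \lceil n/(s+1) \rceil$ (via the standard identity $\lceil n/(s+1) \rceil = \lfloor (n+s)/(s+1) \rfloor$), so the infinite series $\sum_{k \geq 0} p_{n,k}\, y^k$ collapses to the claimed finite sum with upper index $\lceil n/(s+1) \rceil$, yielding exactly $g_n(y) = \sum_{k=0}^{\lceil n/(s+1)\rceil} \binom{n-s(k-1)}{k} b^k y^k$.

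There is essentially no obstacle here: the substantive combinatorial content is already encoded in Proposition \ref{Pnk-explicit}, and the present result is merely a generating-function repackaging. The only bookkeeping concerns the two elementary equivalences that align the two sides on the boundary of their natural domains, together with the observation that the formula remains correct (as $0 = 0$) in the regimes where Proposition \ref{Pnk-explicit} is silent.
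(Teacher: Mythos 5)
Your proposal is correct and follows exactly the route the paper intends: the paper offers no separate argument for this proposition beyond the remark that it follows from Proposition \ref{Pnk-explicit}, and your substitution of $p_{n,k}=b^k\binom{n-s(k-1)}{k}$ into $g_n(y)=\sum_{k\ge 0}p_{n,k}y^k$, together with the $k=0$ check and the observation that all terms with $k>\lceil n/(s+1)\rceil$ vanish, is precisely the intended (and here made explicit) bookkeeping.
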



\begin{corollary}
The mean and the standard deviation for the number of summands (in the $(s,b)$-Generacci legal decompositions ) for integers in $[0, a_{nb+1})$ are given respectively by
\begin{align}
\mu_n\ = \ \frac{\displaystyle  \sum_{k=1}^{\lceil\frac{n}{s+1}\rceil}  {{n-s(k-1)}\choose{k}}kb^k}{ \displaystyle \sum_{k=0}^{\lceil\frac{n}{s+1}\rceil}  {{n-s(k-1)}\choose{k}}b^k}.
\end{align}

and
\begin{align}
\sigma_n^2\ = \ \frac{\displaystyle  \sum_{k=2}^{\lceil\frac{n}{s+1}\rceil}  {{n-s(k-1)}\choose{k}}k^2b^k}{ \displaystyle \sum_{k=0}^{\lceil\frac{n}{s+1}\rceil}  {{n-s(k-1)}\choose{k}}b^k}+\frac{nb}{ \displaystyle \sum_{k=0}^{\lceil\frac{n}{s+1}\rceil}  {{n-s(k-1)}\choose{k}}b^k} -\left[\frac{\displaystyle  \sum_{k=1}^{\lceil\frac{n}{s+1}\rceil}  {{n-s(k-1)}\choose{k}}kb^k}{ \displaystyle \sum_{k=0}^{\lceil\frac{n}{s+1}\rceil}  {{n-s(k-1)}\choose{k}}b^k}\right]^2.
\end{align}
\end{corollary}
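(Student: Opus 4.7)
The plan is to read off both $\mu_n$ and $\sigma_n^2$ directly from the probability mass function implied by the counts $p_{n,k}$, and then substitute the closed-form expression $p_{n,k} = b^k\binom{n-s(k-1)}{k}$ furnished by Proposition \ref{Pnk-explicit}.

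First I would set $T_n := \sum_{k=0}^{\lceil n/(s+1)\rceil} p_{n,k}$, the total number of integers in $[0, a_{nb+1})$. Since every such integer has a unique $(s,b)$-Generacci legal decomposition, the number of summands $Y_n$ of an integer chosen uniformly at random from this interval satisfies $\PP{Y_n = k} = p_{n,k}/T_n$. The definitions of mean and variance then give
\begin{equation*}
\mu_n \;=\; \frac{1}{T_n}\sum_{k\ge 0} k\, p_{n,k}, \qquad \sigma_n^2 \;=\; \frac{1}{T_n}\sum_{k\ge 0} k^2\, p_{n,k} \;-\; \mu_n^2,
\end{equation*}
where the $k=0$ terms contribute zero to both sums, so each effectively starts at $k=1$.

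Next I would substitute $p_{n,k} = b^k\binom{n-s(k-1)}{k}$, valid for $1 \le k \le \lceil n/(s+1)\rceil$. This immediately yields the stated formula for $\mu_n$. For the variance, the only cosmetic step is to separate the $k=1$ term: since $p_{n,1} = b\binom{n}{1} = nb$, we have
\begin{equation*}
\sum_{k\ge 1} k^2\, p_{n,k} \;=\; nb \;+\; \sum_{k\ge 2} k^2\, p_{n,k},
\end{equation*}
which reproduces the three-piece decomposition appearing in the statement of the corollary once divided by $T_n$ and adjusted by $-\mu_n^2$.

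There is no real obstacle; the result is a direct consequence of the explicit binomial formula for $p_{n,k}$. The only items requiring care are the bookkeeping of the boundary terms at $k=0$ and $k=1$, and the verification that the upper summation index $\lceil n/(s+1)\rceil$ coincides with the largest $k$ for which $p_{n,k} > 0$. The latter follows from the condition $n \geq 1 + (k-1)(s+1)$ appearing in Proposition \ref{Pnk-explicit}, since that inequality rearranges to $k \le (n+s)/(s+1)$, and a short case check on $n \bmod (s+1)$ shows the largest such integer $k$ is exactly $\lceil n/(s+1)\rceil$.
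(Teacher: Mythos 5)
Your proposal is correct and matches the paper's intended route: the corollary is an immediate consequence of the explicit formula $p_{n,k}=b^k\binom{n-s(k-1)}{k}$ from Proposition \ref{Pnk-explicit} (equivalently, of evaluating $g_n$ and its derivatives at $y=1$), combined with uniqueness of decompositions to justify the probability mass function, exactly as you do. Your bookkeeping of the $k=0$ and $k=1$ boundary terms and your verification that $\lfloor (n+s)/(s+1)\rfloor = \lceil n/(s+1)\rceil$ is the correct upper summation index are both sound.
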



\section{Extended Variance Arguments}\label{sec:EVA}


\subsection{Examples of Sequences}

We give a few examples of sequences which satisfy our assumptions needed to deduce that the mean and variance both grow linearly with $n$ with non-zero leading term. We state what the various blocks and give an example to show how the method works.

\begin{example}[Fibonacci Sequence and Zeckendorf decompositions] \ \\
We have	$\BS=\{ [0], [1, 0]\}$, \;\;$\T=\{ [1]\}$.\\
	An example of a legal decomposition is $F_5 + F_3 + F_1$, and
	its block representation is $[1, 0] [1, 0] [1]$.\\
		After removing the last $\BS$ type block, the new block representation: $[1, 0] [1]$ and
	the resulting legal decomposition: $F_3 + F_1$.	
\end{example}

\begin{example}[A Specific PLRS] \ \\
Consider the PLRS given by	$H_n = 2H_{n-1} + 2H_{n-2} + 0 + 2H_{n-4}$.\\
We have	$\BS =\{[0], [1], [2,0], [2,1], [2 ,2 ,0 ,0], [2,2 ,0 ,1]\}$, \;\; $\T=\{ [2], [2 , 2], [2 ,2  ,0]\}$\\
	An example of a legal decomposition is $H_7 + 2H_4 + H_1$, with block representation $[1] [0] [0] [2, 0] [0] [1]$.\\
		After removing the last $\BS$ type block, the new block representation is $[1] [0] [0] [2, 0] [1]$ and
	the resulting legal decomposition is $H_6 + 2H_3 + H_1$.
\end{example}

\begin{example}[$(1,3)$-Generacci and its decomposition rules]\ \\
We have		$S = \{[0, 0, 0], [1, 0, 0, 0, 0, 0], [0, 1, 0, 0, 0, 0], [0, 0, 1, 0, 0, 0]\}$, $T = \{[1, 0, 0], [0, 1, 0], [0 ,0 ,1]\}$.
\end{example}

\subsection{Proof of Variance Results}\label{sec:blockproofs}

\begin{proof}[Proof of Lemma \ref{lem:collapsingblock}]
	Let $\omega \in \Upsilon_{n,\mathfrak{b}}$ be arbitrary and consider $\phi_{t,\mathfrak{b}}(\omega)$. First, since the block we remove has size $t$ and thus length $l(t)$, $\phi_{t,\mathfrak{b}}(\omega)$ must be in $\Omega_{n-l(t)}$.
	
	Next, consider $\omega, \omega' \in \Upsilon_{n,\mathfrak{b}}$ such that $\phi_{t,\mathfrak{b}}(\omega) = \phi_{t,\mathfrak{b}}(\omega')$.  Since inserting the same block $\mathfrak{b}$ in the same positions to two equal $(\BS,\T)$-legal decompositions leads to the same results, $\phi_{t,\mathfrak{b}}(\omega) = \phi_{t,\mathfrak{b}}(\omega')$ implies $\omega = \omega'$. Thus $\phi_{t,\mathfrak{b}}$ is injective.
	
	Finally, for any $(\BS,\T)$-legal decomposition in $\Omega_{n-l(t)}$, inserting $\mathfrak{b}$, a block of size $t$ and length $l(t)$, after its last $\BS$ type block generates a $(\BS,\T)$-legal decomposition in $\Upsilon_{n,\mathfrak{b}}$. Thus $\phi_{t,\mathfrak{b}}$ is surjective.
	
	Therefore, $\phi_{t,\mathfrak{b}}$ is a bijection between $\Upsilon_{n,\mathfrak{b}}$ and $\Omega_{n-l(t)}$.
\end{proof}

\begin{proof}[Proof of Corollary \ref{cor:var1}]
Because $\phi_{t,\mathfrak{b}}$ is a bijection between $\Upsilon_{n,\mathfrak{b}}$ and $\Omega_{n-l(t)}$, we have
	\begin{eqnarray}
	\E{\nsum|\text{the  last $\BS$ type block is } \mathfrak{b}}& \ =\ &\E{Y_{n-l(t)} + t}\nonumber\\
	& =&  C(n-l(t)) + d + f(n-l(t)) + t.
	\end{eqnarray}
	Hence
	\begin{eqnarray}
	\E{\nsum|Z_n = t} &\ =\ & \sum_{\mathfrak{b}\in \B_t}{\E{\nsum|\text{the last $\BS$ type block is }\mathfrak{b}} \PP{\text{the  last $\BS$ type block is }\mathfrak{b}| Z_n = t}}\nonumber\\
	& =& \E{Y_{n-l(t)} + t} \sum_{\mathfrak{b}\in \B_t}{\PP{\text{the  last $\BS$ type block is } \mathfrak{b} | Z_n = t}}\nonumber\\
	& =& \E{Y_{n-l(t)} + t}\nonumber\\
	& =& C(n-l(t)) + d+ f(n-l(t)) + t,
	\end{eqnarray}
	
	Next, we have
	\begin{align}
				\E{\nsum^2|Z_n = t} &\ =\ \sum_{\mathfrak{b}\in \B_t}{\E{\nsum^2|\text{the  last $\BS$ type block is }\mathfrak{b}} \PP{\text{the  last $\BS$ type block is }\mathfrak{b}| Z_n = t}}\nonumber\\
				&\ = \ \E{(Y_{n-l(t)} + t)^2} \sum_{\mathfrak{b}\in \B_t}{\PP{\text{the  last $\BS$ type block is } \mathfrak{b} | Z_n = t}}\nonumber\\
				&\ = \ \E{(Y_{n-l(t)} + t)^2}\nonumber\\
				& \ = \  \E{Y_{n-l(t)}^2 + 2t{Y_{n-l(t)}} + t^2}\nonumber\\
				& \ = \  \E{Y_{n-l(t)}^2} + 2t\E{Y_{n-l(t)}} + t^2\nonumber\\
				& \ = \  \E{Y_{n-l(t)}^2} + 2t[C(n-l(t)) + d + f(n-l(t))] + t^2.
		\end{align}

Furthermore, by $\eqref{1}$ we have
		\begin{align}
			\E{\nsum} & \ = \  \sum\limits_{t = 0}^{\ZS} \PP{Z_n = t}\cdot \E{\nsum|Z_n = t}\nonumber\\
			& \ = \  Cn + d + \sum\limits_{t  = 0} ^ {\ZS} \PP{Z_n = t}\cdot [t + f(n-l(t)) - Cl(t)]\nonumber\\
			& \ = \  Cn + d + f(n),
		\end{align}
where the last equality comes from the fact that $\E{\nsum} = Cn+d+f(n)$.
Thus
\begin{equation}
	\E{K_n} \  = \  \sum\limits_{t  = 0} ^ {\ZS} \PP{Z_n = t}\cdot [t + f(n-l(t)) - Cl(t)] \ = \ f(n).
\end{equation}
\end{proof}

\begin{proof}[Proof of Corollary \ref{cor:p0}]
Denote the block with size 0 by $\emptyset$. Recall that in $\BS$ a block with size 0 has the shortest length of all blocks. Hence for an arbitrary block $\mathfrak{b}$ with length $t$,
\begin{align}
\PP{\mbox{last $\BS$ type block is } \emptyset} &\ =\ \frac{|\Upsilon_{n,\emptyset}|}{|\Omega_n|}\nonumber\\
&\ = \ \frac{H_{n - l(0) + 1} - H_{n - l(0)}}{H_{n+1} - H_n}\nonumber\\
&\ \geq\ \frac{H_{n - l(t) + 1} - H_{n - l(t)}}{H_{n+1} - H_n}\nonumber\\
&\ =\ \frac{|\Upsilon_{n,\mathfrak{b}}|}{|\Omega_n|}\nonumber\\
&\ =\ \PP{\mbox{last $\BS$ type block is } \mathfrak{b}}.
\end{align}

Since $ \sum_{\mathfrak{b}\in \BS}\PP{\mbox{last $\BS$ type block is } \mathfrak{b}} =1$, we have $\PP{\mbox{last $\BS$ type block is } \emptyset} \geq \frac{1}{|\BS|}$. Since there is only one block with size 0, $\PP{Z_n = 0} \geq 1/|\BS|$.
\end{proof}

\ \\


\begin{thebibliography}{BBBGILMT1} 

\bibitem[Al]{Al}
H. Alpert,  \emph{Differences of multiple Fibonacci numbers}, Integers: Electronic Journal of Combinatorial Number Theory  \textbf{9} (2009), 745--749.

\bibitem[BBGILMT]{BBGILMT}
O. Beckwith, A. Bower, L. Gaudet, R. Insoft, S. Li, S. J. Miller and P. Tosteson, \emph{The Average Gap Distribution for Generalized Zeckendorf Decompositions}, Fibonacci Quarterly \textbf{51} (2013), 13--27.

\bibitem[B-AM]{B-AM}
I. Ben-Ari  and S. J. Miller, \emph{A Probabilistic Approach to Generalized Zeckendorf Decompositions}, \textbf{30} (2016), no. 2, 1302--1332.



\bibitem[BDEMMTTW]{BDEMMTTW}
A. Best, P. Dynes, X. Edelsbrunner, B. McDonald, S. J. Miller, K. Tor, C. Turnage-Butterbaugh, M. Weinstein, \emph{Gaussian Distribution of Number Summands in Zeckendorf Decompositions in Small Intervals}, Fibonacci Quarterly. (52 (2014), no. 5, 47--53).


\bibitem[BILMT]{BILMT}
A. Bower, R. Insoft, S. Li, S. J. Miller and P. Tosteson, \emph{The Distribution of Gaps between Summands in Generalized Zeckendorf Decompositions} (and an appendix on \emph{Extensions to Initial Segments} with Iddo Ben-Ari), Journal of Combinatorial Theory, Series A \textbf{135} (2015), 130--160.

\bibitem[CFHMN1]{CFHMN1}
M. Catral, P. Ford, P. E. Harris, S. J. Miller, and D. Nelson, \emph{Generalizing Zeckendorf's Theorem: The Kentucky Sequence}, Fibonacci Quarterly. (52 (2014), no. 5, 68--90).

\bibitem[CFHMN2]{CFHMN2}
M. Catral, P. Ford, P. E. Harris, S. J. Miller, and D. Nelson, \emph{Legal Decompositions Arising from Non-positive Linear Recurrences}, preprint. \bburl{http://arxiv.org/pdf/1606.09312v1}.


\bibitem[Day]{Day}
D. E. Daykin,  \emph{Representation of Natural Numbers as Sums of Generalized Fibonacci Numbers}, J. London Mathematical Society \textbf{35} (1960), 143--160.

\bibitem[DDKMMV]{DDKMMV}
P. Demontigny, T. Do, A. Kulkarni, S. J. Miller, D. Moon and U. Varma, \emph{Generalizing Zeckendorf's Theorem to $f$-decompositions}, Journal of Number Theory \textbf{141} (2014), 136--158.

\bibitem[DDKMV]{DDKMV}
P. Demontigny, T. Do, A. Kulkarni, S. J. Miller and U. Varma, \emph{A Generalization of Fibonacci Far-Difference Representations and Gaussian Behavior}, Fibonacci Quarterly \textbf{52} (2014), no. 3, 247--273.

\bibitem[DFFHMPP1]{DFFHMPP1} R. Dorward, P. Ford, E. Fourakis, P. E. Harris, S. J. Miller, E. Palsson and H. Paugh, \emph{Individual Gap Measures from Generalized Zeckendorf Decompositions}, to appear in Uniform Distribution Theory. {\small\url{http://arxiv.org/pdf/1509.03029v1.pdf}}.

\bibitem[DFFHMPP2]{DFFHMPP2} R. Dorward, P. Ford, E. Fourakis, P. E. Harris, S. J. Miller, E. Palsson and H. Paugh, \emph{A Generalization of Zeckendorf's Theorem via Circumscribed $m$-gons}, to appear in Involve. {\small\url{http://arxiv.org/pdf/1508.07531}}.


\bibitem[DG]{DG}
M. Drmota and J. Gajdosik, \emph{The distribution of the sum-of-digits function},
J. Th\'eor. Nombr\'es Bordeaux \textbf{10} (1998), no. 1, 17--32.




\bibitem[FGNPT]{FGNPT}
P. Filipponi, P. J. Grabner, I. Nemes, A. Peth\"o, and R. F. Tichy, \emph{Corrigendum
to: ``Generalized Zeckendorf expansions''}, Appl. Math. Lett.,
\textbf{7} (1994), no. 6, 25--26.



\bibitem[GT]{GT}
P. J. Grabner and R. F. Tichy, \emph{Contributions to digit expansions with respect to linear recurrences}, J. Number Theory \textbf{36} (1990), no. 2, 160--169.

\bibitem[GTNP]{GTNP}
P. J. Grabner, R. F. Tichy, I. Nemes, and A. Peth\"o, \emph{Generalized Zeckendorf expansions}, Appl. Math. Lett. \textbf{7} (1994), no. 2, 25--28.

\bibitem[Ha]{Ha} N. Hamlin,  \emph{Representing Positive Integers as a Sum of Linear Recurrence Sequences}, Fibonacci Quarterly \textbf{50} (2012), no. 2, 99--105.

\bibitem[Ho]{Ho} V. E. Hoggatt,  \emph{Generalized Zeckendorf theorem}, Fibonacci Quarterly \textbf{10} (1972), no. 1 (special issue on representations), pages 89--93.

\bibitem[Ke]{Ke} T. J. Keller,  \emph{Generalizations of Zeckendorf's theorem}, Fibonacci Quarterly \textbf{10} (1972), no. 1 (special issue on representations), pages 95--102.

\bibitem[LT]{LT}
M. Lamberger and J. M. Thuswaldner, \emph{Distribution properties of digital expansions
arising from linear recurrences}, Math. Slovaca \textbf{53} (2003), no. 1, 1--20.

\bibitem[Len]{Len} T. Lengyel, \emph{A Counting Based Proof of the Generalized Zeckendorf's Theorem}, Fibonacci Quarterly \textbf{44} (2006), no. 4, 324--325.

\bibitem[Lek]{Lek} C. G. Lekkerkerker,  \emph{Voorstelling van natuurlyke getallen door een som van getallen van Fibonacci}, Simon Stevin \textbf{29} (1951-1952), 190--195.

\bibitem[LiM]{LiM}
R. Li and S. J. Miller, \emph{Central Limit Theorems for gaps of Generalized Zeckendorf Decompositions}, preprint 2016. \bburl{http://arxiv.org/pdf/1606.08110}.


\bibitem[L-C]{Lozada} G. Lozada-Cruz,  \emph{A simple application of the implicit function theorem}, Bolet\'{i}n de la Asociaci\'{o}n Matem\'{a}tica Venezolana, Vol. XIX, No. 1 (2012), 71--75.

\bibitem[KKMW]{KKMW} M. Kolo$\breve{{\rm g}}$lu, G. Kopp, S. J. Miller and Y. Wang,  \emph{On the number of summands in Zeckendorf decompositions}, Fibonacci Quarterly \textbf{49} (2011), no. 2, 116--130.



\bibitem[MW00]{maxwell_woodroofe}
Michael Maxwell and Michael Woodroofe, \emph{Central limit theorems for
  additive functionals of {M}arkov chains}, Ann. Probab. \textbf{28} (2000),
  no.~2, 713--724.

\bibitem[MW1]{MW1}
S. J. Miller and Y. Wang,  \emph{From Fibonacci numbers to Central Limit Type Theorems}, Journal of Combinatorial Theory, Series A \textbf{119} (2012), no. 7, 1398--1413.

\bibitem[MW2]{MW2}
S. J. Miller and Y. Wang, \emph{Gaussian Behavior in Generalized Zeckendorf Decompositions}, Combinatorial and Additive Number Theory, CANT 2011 and 2012 (Melvyn B. Nathanson, editor), Springer Proceedings in Mathematics \& Statistics (2014), 159--173.





\bibitem[Ste1]{Ste1}
W. Steiner, \emph{Parry expansions of polynomial sequences}, Integers \textbf{2} (2002), Paper A14.

\bibitem[Ste2]{Ste2}
W. Steiner, \emph{The Joint Distribution of Greedy and Lazy Fibonacci Expansions}, Fibonacci Quarterly \textbf{43} (2005), 60--69.

\bibitem[Ze]{Ze} E. Zeckendorf, \emph{Repr\'esentation des nombres naturels par une somme des nombres de Fibonacci ou de nombres de Lucas}, Bulletin de la Soci\'et\'e Royale des Sciences de Li\'ege \textbf{41} (1972), pages 179--182.


\end{thebibliography}
\end{document}